\newtheorem{theorem}{Theorem}
\newtheorem{lemma}[theorem]{Lemma}
\newtheorem{remark}[theorem]{Remark}
\newtheorem{corollary}[theorem]{Corollary}
\newtheorem{proposition}[theorem]{Proposition}
\numberwithin{theorem}{section} \numberwithin{equation}{section}
\newcommand{\beq}{\begin{small} \begin{equation}}
\newcommand{\eeq}{\end{equation} \end{small}}
\newcommand{\beqn}{\begin{small} \begin{equation*}}
\newcommand{\eeqn}{\end{equation*} \end{small}}
\DeclareMathAlphabet{\mathpzc}{OT1}{pzc}{m}{it}
\newcommand\scalemath[2]{\scalebox{#1}{\mbox{\ensuremath{\displaystyle #2}}}}
\newcommand*{\MyScalePicHuge}{0.8}
\newcommand*{\MyScalePicLarge}{0.45}
\newcommand*{\MyScalePicBig}{0.4}
\newcommand*{\MyScalePicMed}{0.35}
\newcommand*{\MyScalePicSmall}{0.3}
\begin{document}
\title[On K3 surfaces of Picard rank 14]{On K3 surfaces of Picard rank 14}
\author{Adrian Clingher}
\address{Department of Mathematics and Statistics, University of Missouri - St. Louis, St. Louis, MO 63121}
\email{clinghera@umsl.edu}
\author{Andreas Malmendier}
\address{Department of Mathematics \& Statistics, Utah State University, Logan, UT 84322}
\email{andreas.malmendier@usu.edu}
\begin{abstract}
We study complex algebraic K3 surfaces with finite automorphism groups and polarized by rank-fourteen, 2-elementary lattices. Three such lattices exist, namely $H \oplus E_8(-1) \oplus A_1(-1)^{\oplus 4}$, $H \oplus E_8(-1) \oplus D_4(-1)$, and $H \oplus D_8(-1) \oplus D_4(-1)$. As part of our study, we provide birational models for these surfaces as quartic projective hypersurfaces and describe the associated coarse moduli spaces in terms of suitable modular invariants. Additionally, we explore the connection between these families and dual K3 families related via the Nikulin construction.
 \end{abstract}
\keywords{K3 surfaces, elliptic fibrations, Nikulin involutions, coarse moduli spaces}
\subjclass[2020]{14J27, 14J28; 14J15, 14D22}
\maketitle
\section{Introduction and summary of results}
Let $\mathcal{X}$ be a smooth complex projective K3 surface. Denote by $\operatorname{NS}(\mathcal{X})$ the N\'eron-Severi lattice of $\mathcal{X}$. This is known to be an even lattice of signature $(1,p_\mathcal{X}-1)$, where $p_\mathcal{X}$ being the Picard rank of $\mathcal{X}$, with $1 \leq p_\mathcal{X} \leq 19$.  A {\it lattice polarization} \cites{MR0357410,MR0429917,MR544937,MR525944,MR728992} on $\mathcal{X}$ is, by definition, a primitive lattice embedding $i \colon L \hookrightarrow \operatorname{NS}(\mathcal{X})$, with $i(L)$ containing a pseudo-ample class. Here, $L$ is a choice of even indefinite lattice of signature $(1,\rho_L-1)$, with $ 1 \leq \rho_L \leq 20$. Two $L$-polarized K3 surfaces $(\mathcal{X},i)$ and $(\mathcal{X}',i')$ are said to be isomorphic\footnote{Our definition of isomorphic lattice polarizations coincides with the one used by Vinberg \cites{MR2682724, MR2718942, MR3235787}. It is slightly more general than the one used in \cite{MR1420220}*{Sec.~1}.},  if there exists an analytic isomorphism $\alpha \colon \mathcal{X} \rightarrow \mathcal{X}'$ and a lattice isometry  $ \beta \in O(L)$, such that $ \alpha^* \circ i' = i \circ \beta $, where $\alpha^*$ is the induced morphism at cohomology level. In general, $L$-polarized K3 surfaces are classified, up to isomorphism, by a coarse moduli space $\mathscr{M}_{L}$, which is known  \cite{MR1420220} to be a quasi-projective variety of dimension $20-\rho_L$.   A \emph{general} $L$-polarized K3 surface $(\mathcal{X},i)$ satisfies $i(L)=\operatorname{NS}(\mathcal{X})$.
\par A special case for the above discussion is given by the polarizations by the lattice $ H \oplus N$. Here $H$ represents the standard hyperbolic lattice of rank two and $N$ is the rank-eight Nikulin lattice; see \cite{MR728142}*{Def.~5.3}.  The moduli space $\mathscr{M}_{H \oplus N}$ is ten-dimensional.  A polarization by the lattice $H \oplus N$ is known \cite{MR2274533} to be equivalent to the existence of a canonical \emph{van Geemen-Sarti involution} $ \jmath_{\mathcal{X}} \colon \mathcal{X} \rightarrow \mathcal{X} $ on the K3 surface $\mathcal{X}$, i.e., a symplectic involution that is given by fiber-wise translations, by a section of order-two, in a Jacobian elliptic fibration on $\mathcal{X}$; the fibration is usually referred to as \emph{alternate fibration}. If one factors $\mathcal{X}$ by the involution $\jmath_{\mathcal{X}}$ and then resolves the eight occurring singularities, a new K3 surface $\mathcal{Y}$ is obtained, related to $\mathcal{X}$ via a rational double-cover map $ \mathcal{X} \dashrightarrow \mathcal{Y}$. This construction is referred to in the literature as the \emph{Nikulin construction}. The surface $\mathcal{Y}$ also has a canonical van Geemen-Sarti involution $ \jmath_{\mathcal{Y}}$ and in turn carries a $H \oplus N$-lattice polarization. Moreover, if one repeats the Nikulin construction on $\mathcal{Y}$, the original K3 surface $\mathcal{X}$ is recovered. The two surface $\mathcal{X}$ and $\mathcal{Y}$ are related via a pair of dual rational double-cover maps:      
\beq
 \label{diag:isogeny1}
\xymatrix{
\mathcal{X} \ar @(dl,ul) _{\jmath_\mathcal{X} } \ar @/_0.5pc/ @{-->}  [rr]
&
& \mathcal{Y} \ar @(dr,ur) ^{\jmath_\mathcal{Y} }  \ar @/_0.5pc/ @{-->}  [ll] \\
}
\eeq
We shall refer to this correspondence as the {\it van Geemen-Sarti-Nikulin duality}. It determines an interesting involution, at the level of moduli spaces:
\beq
 \imath_{\mathrm{vgsn}}  \colon \quad \mathscr{M}_{H \oplus N} \rightarrow \mathscr{M}_{H\oplus N} \,, 
 \qquad \text{with} \quad  \imath_{\mathrm{vgsn}}  \circ \imath_{\mathrm{vgsn}}  = {\rm id}  \,.
\eeq
\par Let us turn to the main content of the present article: the focus of the paper is the study of K3 surfaces with finite automorphism groups and polarized by rank-fourteen lattices of \emph{2-elementary} type. As a reminder, a lattice is called \emph{2-elementary} if its discriminant group is a self-product of $\mathbb{Z}_2$.  As proved by Nikulin \cites{MR544937,MR633160,MR633160b,MR752938,MR3165023}, Picard rank fourteen is the highest rank where there exist more than one 2-elementary, primitive sub-lattice of the K3 lattice for K3 surfaces with finite automorphism groups. The three possibilities, in this rank, are
\beq
\label{eqn:lattices_intro}
 P_{14}=H \oplus E_8(-1) \oplus A_1(-1)^{\oplus 4} , \  P'_{14}=H \oplus D_8(-1) \oplus D_4(-1), \ P''_{14}=H \oplus E_8(-1) \oplus D_4(-1).
\eeq 
Here, $E_n(-1)$, $D_n(-1)$, $A_n(-1)$ are the negative definite even lattices associated with  their corresponding namesake root systems.  
\par This article provides a detailed study for  K3 surfaces polarized by the three lattices above. For each of the above cases, we give explicit 
birational models, given as projective quartic surfaces.  We also describe the associated coarse moduli spaces and studey  By work of Kondo~\cite{MR1029967}, 
Nikulin~\cite{MR556762} and Sterk~\cite{MR786280}, one knows that general members of the lattice polarized K3 families associated with~(\ref{eqn:lattices_intro}) carry finitely many smooth rational curves. We construct the dual graphs of smooth rational curve configurations.  Finally, we study the dual van Geemen-Sarti-Nikulin K3 families (if they exist).

\subsection{The \texorpdfstring{$P_{14}$}{P}-polarized K3 surfaces}
The most intricate case, among the three lattice polarizations listed in~(\ref{eqn:lattices_intro}), is the case of lattice $P_{14}$. The K3 surfaces polarized by the lattice $P_{14}$ fit into a family of projective quartic surfaces as follows: 
\begin{theorem}
\label{thm:intro}
Let $(\alpha, \beta, \gamma, \delta , \varepsilon, \zeta, \eta, \iota, \kappa, \lambda) \in \mathbb{C}^{10}$. Consider the projective surface in $\mathbb{P}^3=\mathbb{P}(\mathbf{X}, \mathbf{Y}, \mathbf{Z}, \mathbf{W})$ defined by the homogeneous quartic equation
\beq
\label{quartic1}
\begin{split}
0=& \ \mathbf{Y}^2 \mathbf{Z} \mathbf{W}-4 \mathbf{X}^3 \mathbf{Z}+3 \alpha \mathbf{X} \mathbf{Z} \mathbf{W}^2+\beta \mathbf{Z} \mathbf{W}^3  \\
- & \frac{1}{2} \big(  2 \gamma \mathbf{X} - \delta \mathbf{W}  \big)   \big(  2 \eta \mathbf{X} - \iota \mathbf{W}  \big)  \mathbf{Z}^2
 - \frac{1}{2} \big(  2 \varepsilon \mathbf{X} - \zeta \mathbf{W}  \big)   \big(  2 \kappa \mathbf{X} - \lambda \mathbf{W}  \big)  \mathbf{W}^2.
\end{split}
\eeq
Assuming general parameters, the surface  $\mathcal{X}$ obtained as the minimal resolution of (\ref{quartic1}) is a K3 surface endowed with a canonical  $P_{14}$-polarization. Conversely, every $P_{14}$-polarized K3 surface has a birational projective model given by Equation~(\ref{quartic1}). 
\end{theorem}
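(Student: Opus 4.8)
The plan is to exhibit on $\mathcal X$ a canonical Jacobian elliptic fibration and to extract the $\mathrm P$‑polarization from its trivial lattice. Work in the affine chart $\mathbf W=1$ and set $t=\mathbf Z$, $x=\mathbf X$, $y=\mathbf Y$; then \eqref{quartic1} becomes
\[
t\,\bigl(y^2-4x^3+3\alpha x+\beta\bigr)=\tfrac12(2\gamma x-\delta)(2\eta x-\iota)\,t^2+\tfrac12(2\varepsilon x-\zeta)(2\kappa x-\lambda)\,.
\]
The fibre‑preserving birational substitution $x=X/(4t^2)$, $y=Y/(4t^3)$ turns this into a Weierstrass equation $Y^2=X^3+a_2(t)X^2+a_4(t)X+a_6(t)$ with $\deg a_2\le 3$, $\deg a_4\le 5$, $\deg a_6\le 7$; completing the cube gives $Y^2=X^3+A(t)X+B(t)$ with $\deg A\le 6\le 8$ and $\deg B\le 9\le 12$. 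Hence the associated relatively minimal elliptic surface $\pi\colon\mathcal X\to\mathbb P^1_t$ has $\omega_{\mathcal X}\cong\mathcal O_{\mathcal X}$ and $b_1=0$, so it is a K3 surface; being birational to \eqref{quartic1} and minimal, it is the minimal resolution of \eqref{quartic1}. (Alternatively one checks directly that under the non‑vanishing hypotheses \eqref{quartic1} is normal with only rational double points, so that its minimal resolution is a K3 surface carrying the Jacobian fibration induced by the pencil $\mathbf Z/\mathbf W=\mathrm{const}$, with the Weierstrass model above.)

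Next I would determine the reducible fibres. One computes the discriminant $\Delta(t)$ and checks that $\operatorname{ord}_{t=0}\Delta\ge 8$ and $\operatorname{ord}_{t=\infty}\Delta\ge 8$ (for the fibre at infinity, because the top $t$‑coefficients of $\Delta$ vanish identically), while the $j$‑invariant has a pole at both $t=0$ and $t=\infty$; moreover $\operatorname{ord}_t a_6\ge 5$ near each of these two points rules out Kodaira type $\mathrm{IV}^*$. Tate's algorithm then forces the fibres over $t=0$ and $t=\infty$ to be of type $\mathrm I^*_n$ with $n\ge 2$, or $\mathrm{III}^*$, or $\mathrm{II}^*$; in every one of these cases the fibre lattice contains the root lattice $D_6$. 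Together with a general fibre $F$ and the zero section $O$ this shows that the trivial lattice $\operatorname{Triv}(\pi)$ contains a copy of $H\oplus D_6(-1)^{\oplus 2}$, which by \eqref{eqn:2elementary_lattices} is isometric to $\mathrm P$.

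It remains to package this into a polarization. The inclusions $\mathrm P\cong H\oplus D_6(-1)^{\oplus 2}\hookrightarrow\operatorname{Triv}(\pi)\hookrightarrow\operatorname{NS}(\mathcal X)$ give a lattice embedding $i\colon\mathrm P\hookrightarrow\operatorname{NS}(\mathcal X)$; it is primitive because $D_6$ embeds primitively in each of $D_{n+4}$, $E_7$, $E_8$, because $\operatorname{Triv}(\pi)$ is primitive in $\operatorname{NS}(\mathcal X)$ once one knows that the Mordell--Weil group of $\pi$ is torsion‑free (no rational root, $3$‑torsion, etc.\ of the Weierstrass cubic over $\mathbb C(t)$), and because primitivity is transitive. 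The class $O+nF$ for $n\gg 0$ lies in the $H$‑summand of $\mathrm P$ and is nef and big, hence pseudo‑ample, so $i$ is a lattice polarization; since the elliptic pencil, its zero section and the distinguished fibre components are intrinsically attached to $\mathcal X$, this $\mathrm P$‑polarization is canonical. (For a very general parameter choice $\operatorname{MW}(\pi)=0$ and $i(\mathrm P)=\operatorname{NS}(\mathcal X)$, so $(\mathcal X,i)$ is then a generic $\mathrm P$‑polarized K3 surface.)

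The main obstacle is the bundle of explicit computations underlying the first two paragraphs: establishing the rational‑double‑point (equivalently, minimal‑resolution) claim, and running Tate's algorithm uniformly over the whole parameter locus allowed by the hypotheses — tracking how the Kodaira types at $t=0,\infty$ degenerate as various of $\gamma,\dots,\lambda$ vanish while keeping a $D_6$ inside each fibre lattice — together with the verification that $\operatorname{MW}(\pi)$ stays torsion‑free (so that $i$ remains primitive) on that locus. Once these are in place, the remaining steps — nefness of $O+nF$, transitivity of primitivity, and canonicity — are formal.
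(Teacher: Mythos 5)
Your strategy is sound but takes a genuinely different route from the paper's. You extract the $\mathrm{P}$-polarization from the \emph{standard} fibration (the visible pencil $t=\mathbf{Z}/\mathbf{W}$), whose two special fibres over $t=0,\infty$ are generically of type $I_2^*$, so that the trivial lattice contains $H\oplus D_6(-1)^{\oplus 2}\cong\mathrm{P}$. The paper instead uses the fibration of case $(3')$ in Theorem~\ref{thm1}, cut out by a pencil of \emph{quadric} surfaces, whose root lattice is $E_8\oplus A_1^{\oplus 4}$ with trivial Mordell--Weil group, so its trivial lattice is literally $\mathrm{P}=H\oplus E_8(-1)\oplus A_1(-1)^{\oplus 4}$ and primitivity is immediate from Shioda--Tate; torsion-freeness of the Mordell--Weil group is supplied beforehand by the lattice-theoretic classification of admissible pairs $(\mathrm{K}^{\text{root}},\mathcal{W})$ in Proposition~\ref{cor:EFS2} (finiteness of $\operatorname{Aut}$, Shimada's list). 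Your pencil has the advantage of being defined for every admissible parameter tuple (the paper's pencil $\tilde{\mathrm{C}}_3$ degenerates when $\gamma\varepsilon\eta\kappa=0$), at the cost of having to track the degenerations of the two cusps and to argue primitivity of a proper sublattice of each fibre lattice.

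Two points need repair. First, your Kodaira-type argument is internally inconsistent: if $j$ had a pole at $t=0,\infty$ the fibres could only be of type $I_n$ or $I_n^*$, which contradicts the outcomes $III^*$ and $II^*$ that genuinely occur on the boundary of the parameter space (e.g.\ $\eta=\kappa=0$ yields $2\,III^*$, cf.\ Remark~\ref{rem:rank16}, and $\gamma=\eta=0$ yields a $II^*$ fibre). What you actually need is: the fibre is additive, $\operatorname{ord}\Delta\ge 8$, and type $IV^*$ is excluded; this forces $I_{\ge 2}^*$, $III^*$ or $II^*$, each of which does contain $D_6$ primitively, so the conclusion survives once the intermediate claim is corrected. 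Second, and more substantively, the primitivity of $i(\mathrm{P})$ hinges on $\operatorname{MW}(\pi)$ being torsion-free for \emph{every} admissible tuple, not just generically: a $2$-torsion section appearing on a special locus would glue the discriminant groups of the two fibre lattices and could destroy primitivity of $D_6\oplus D_6$. You list this among the deferred computations, but it is the crux rather than a routine check -- it is exactly the point the paper settles by pure lattice theory before touching the quartic. Finally, the rational-double-point verification cannot be bypassed by the birational elliptic-surface argument: if the quartic had a worse-than-ADE singularity, its minimal resolution would not be a K3 surface and would not coincide with your relatively minimal elliptic surface, so the normality/RDP check is an essential, not an ``alternative,'' step.
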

\par The result will be obtained as Theorem~\ref{thm1}, and the dual graph of all smooth rational curves will be determined in Theorem~\ref{thm:polarization14}. One can also tell when two members of the above family are isomorphic.  Let $\mathpzc{G}$ be the subgroup of $\operatorname{Aut}(\mathbb{C}^{10})$ generated by the following set of transformations:
\beq
\begin{array}{rcl}
 (\alpha,\beta, \gamma, \delta, \varepsilon, \zeta, \eta,  \iota, \kappa, \lambda) & \ \to \ &
(\alpha,  \beta,  \varepsilon,  \zeta, \gamma,  \delta , \eta,  \iota, \kappa, \lambda ) \,, \\[0.2em]
(\alpha,\beta, \gamma, \delta, \varepsilon, \zeta, \eta,  \iota, \kappa, \lambda) & \ \to \ &
(\alpha,\beta,  \eta,  \iota,  \varepsilon, \zeta, \gamma, \delta, \kappa, \lambda) \,,\\[0.2em]
(\alpha,\beta, \gamma, \delta, \varepsilon, \zeta, \eta,  \iota, \kappa, \lambda)& \ \to \ &
(\alpha,\beta, \gamma, \delta, \kappa, \lambda, \eta,  \iota, \varepsilon, \zeta) \,, \\ [0.2em]
(\alpha,\beta, \gamma, \delta, \varepsilon, \zeta, \eta,  \iota, \kappa, \lambda)& \ \to \ &
(\Lambda^4 \alpha,  \Lambda^6 \beta,  \Lambda^{10} \gamma,  \Lambda^{12} \delta,  \Lambda^{-2} \varepsilon,  \zeta , \Lambda^{-2} \eta, \iota, \Lambda^{-2} \kappa, \Lambda )  \,,
\end{array}
\eeq
with $\Lambda \in \mathbb{C}^\times$. Then, two K3 surfaces in the above family are isomorphic, as $P_{14}$-polarized K3 surfaces, if and only if their coefficient 10-tuples belong to the same orbit, under the action of $\mathpzc{G}$. This fact leads to the definition of the following invariants:
\begin{small}
\begin{gather}
\label{eqn:invariants_intro_1}
J_4  = \alpha \,, \qquad J'_4 = \gamma \varepsilon  \eta \kappa \,, \qquad J_6 = \beta \,, \\
J'_6  =  \gamma \varepsilon  (\iota \kappa + \eta \lambda)   + \eta \kappa  (\gamma \zeta + \delta \varepsilon) \,,\\
J_8  =   (\gamma \zeta + \delta \varepsilon) (\iota \kappa + \eta \lambda) + \delta \zeta \eta \kappa + \gamma \varepsilon \iota \lambda \,,\\
\label{eqn:invariants_intro_4}
J_{10}  =  \delta \zeta (\iota \kappa + \eta \lambda) + \iota \lambda (\gamma \zeta + \delta \varepsilon)  \,, \qquad
J_{12}  =   \delta \zeta  \iota \lambda \,.
\end{gather}
\end{small}%
These seven invariants may be interpreted as a weighted-projective point, i.e., 
\beqn
\Big[  J_4 :  J'_4 : J_6:  J'_6 :  J_8:  J_{10}:  J_{12}   \Big]   \in  \mathbb{WP}_{(4,4,6,6,8,10,12)}  \,,
\eeqn
associated to a $P_{14}$-polarized K3 surface.  The result is based on the existence of a unique Jacobian elliptic fibration on a general $P_{14}$-polarized K3 surface, given by
\beq
\label{eqn:alt00}
  \mathcal{X}\colon \quad y^2 z  = x^3 + v  A(u, v) \, x^2 z + v^4 B(u, v) \, x z^2  \,,
\eeq
with the defining polynomials
\beq
 A(t) =  t^3 - 3 \alpha t - 2 \beta \,, \qquad
 B(t)  =  \big(\gamma t - \delta\big) \big(\varepsilon t - \zeta\big)  \big(\eta t  - \iota\big)  \big(\kappa t -\lambda\big) \,.
\eeq 
In this context, the following will be proved as Theorem~\ref{thm:moduli_space_alt1}:
\begin{theorem}
\label{thm:intro2}
 The six-dimensional open analytic space
 \beq
 \mathscr{M}_P = \label{eqn:moduli_space_P}
 \left\{ 
\begin{array}{c} 
 \Big[  J_4 :  J'_4 : J_6:  J'_6 :  J_8:  J_{10}:  J_{12}   \Big] \\
 \in \mathbb{WP}_{(4,4,6,6, 8,10,12)}  
 \end{array}
 \left\vert 
\begin{array}{l}
{ \scriptstyle  (  J'_4,  J'_6 ,  J_8,  J_{10},  J_{12} )  \neq 0 , }\\
{ \scriptstyle \not \exists \, r, s \, \in \, \mathbb{C}: \ (J_4, J_6) = ( r^2, r^3) \; \text{and}} \\
{ \scriptstyle (J'_4, J'_6, J_8, J_{10}, J_{12}) = (s, -4 r s, 6r^2 s, -4r^3 s, r^4 s)}
\end{array}  \right. \right\} ,
\eeq
is a coarse moduli space of $P_{14}$-polarized K3 surfaces.\footnote{The weighted projective space, considered as a stack, has a $\mathbb{Z}/2\mathbb{Z}$ stabilizer at a general point. However, we want to keep these even weights as they can be interpreted as the weights of the generators that freely generate the associated algebra of automorphic forms.}
\end{theorem}
\noindent Should one set $J'_4=0$ in the above context, one obtains an enhancement of the polarization to the following rank-fifteen lattice:
\beqn 
P_{15} \ = \ H \oplus E_8(-1) \oplus D_4(-1)  \oplus A_1(-1) 
 \cong   H \oplus E_7(-1) \oplus D_6(-1) 
 \cong   H   \oplus D_{12}(-1) \oplus A_1(-1) .
\eeqn 
And given $J'_4=J'_6=0$, the lattice polarization becomes:
\beq
\label{rank16}
P_{16} \  = \ H \oplus E_8(-1) \oplus D_6(-1)  
\cong  H \oplus E_7(-1) \oplus E_{7}(-1) 
 \cong   H   \oplus D_{14}(-1)    \ .
\eeq
The case~(\ref{rank16}) was studied at length in earlier work \cites{CHM19,MR4160930} by the authors.
\par K3 surfaces with $P_{14}$-polarization provide an interesting case to study from the point of view of the van Geemen-Sarti-Nikulin duality. As we will show, one has a canonical lattice embedding  $H \oplus N \hookrightarrow P_{14}$, which is unique up to an isometry. Therefore, any $P_{14}$-polarized K3 surface also carries an underlying $H \oplus N$-polarization; we explain this point in Remark~\ref{rem:P}. This leads to a canonical embedding 
\beqn
	\mathscr{M}_P \  \hookrightarrow \ \mathscr{M}_{H\oplus N}    \,,
\eeqn
which realizes $\mathscr{M}_P$ as a six-dimensional sub-variety inside the ten-dimensional quasi-projective moduli space $\mathscr{M}_{H \oplus N}$. It is then natural to ask: what are the van Geemen-Sarti-Nikulin duals to $P_{14}$-polarized K3 surfaces? As it turns out, the answer is quite interesting and will be given in Theorem~\ref{thm:end1c}:
\begin{theorem}
\label{thm:VGSNdual}
Let $(\mathcal{X},i)$ be a $P_{14}$-polarized K3 surface. The surface $\mathcal{X}$ carries a canonical van Geemen-Sarti involution $j_{\mathcal{X}} \in \operatorname{Aut}(\mathcal{X}) $. Denote by $\mathcal{Y}$ the new K3 surface obtained after applying the Nikulin construction in the context of $j_{\mathcal{X}}$.  Then, $\mathcal{Y}$ is the minimal resolution of a double cover of $\mathbb{P}^2$ branched over three distinct concurrent lines and a cubic curve.  
\end{theorem}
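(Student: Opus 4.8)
The plan is to follow the Nikulin construction through the \emph{alternate} Jacobian elliptic fibration of $\mathcal{X}$, carry out the fibre-wise $2$-isogeny it induces, and then recast the resulting surface birationally so that it manifestly becomes a double cover of $\mathbb{P}^2$. First, I would make $j_{\mathcal{X}}$ explicit. By the characterization in \cite{MR2274533}, the canonical primitive embedding $H \oplus \mathrm{N} \hookrightarrow \mathrm{P}$, which is unique up to $O(\mathrm{P})$ as recalled above, distinguishes a Jacobian elliptic fibration $\pi_{\mathrm{alt}} \colon \mathcal{X} \to \mathbb{P}^1$ carrying a two-torsion section, and $j_{\mathcal{X}}$ is fibre-wise translation by that section. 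Locating $\pi_{\mathrm{alt}}$ in the classification of Jacobian fibrations on $\mathcal{X}$ established earlier, I would write it in the Weierstrass form $y^2 = x\bigl(x^2 + a_2(t)\, x + a_4(t)\bigr)$, where $a_2, a_4$ are polynomial in $t$ and explicitly expressed through $(\alpha, \beta, \gamma, \dots, \lambda)$, and the two-torsion section is $\{x = y = 0\}$.

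Next, the Nikulin quotient $\mathcal{Y} = \widetilde{\mathcal{X}/j_{\mathcal{X}}}$ is the K3 surface underlying the fibre-wise $2$-isogenous Jacobian fibration, which by V\'elu's formulas has Weierstrass equation $Y^2 = X\bigl(X^2 - 2 a_2(t)\, X + (a_2(t)^2 - 4 a_4(t))\bigr)$. I would then reorganize this equation: choosing weights so that the base $\mathbb{P}^1$ and the fibre direction become two of the three homogeneous coordinates $[\mathbf{u} : \mathbf{v} : \mathbf{w}]$ of a projective plane, and absorbing a suitable square into the $Y$-coordinate, the equation takes the form $\mathbf{s}^2 = g(\mathbf{u},\mathbf{v})\, h(\mathbf{u},\mathbf{v},\mathbf{w})$ in $\mathbb{WP}_{(1,1,1,3)} = \mathbb{WP}(\mathbf{u},\mathbf{v},\mathbf{w},\mathbf{s})$, with $g$ a binary cubic form and $h$ a ternary cubic form. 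This realizes $\mathcal{Y}$ as the minimal resolution of the double cover of $\mathbb{P}^2$ branched over the sextic $\{g h = 0\}$; since $g$ depends only on $\mathbf{u}$ and $\mathbf{v}$, its zero locus is a union of three lines all passing through the point $[0:0:1]$, while $\{h=0\}$ is a cubic curve. For generic $(\alpha,\dots,\lambda)$ the binary cubic $g$ has three distinct roots, and $h$ is a smooth cubic not passing through $[0:0:1]$ and meeting each of the three lines transversally --- so the branch locus is precisely ``three distinct concurrent lines and a cubic curve,'' which is the assertion.

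To confirm that this $\mathcal{Y}$ is the \emph{generic} member of the double-plane family rather than a special one, I would compare N\'eron--Severi lattices. On one side the Nikulin construction, applied to a generic $\mathrm{P}$-polarized $(\mathcal{X},i)$, produces a specific rank-fourteen, two-elementary lattice $\mathrm{P}^\vee \supseteq H \oplus \mathrm{N}$ as $\operatorname{NS}(\mathcal{Y})$; on the other side, the minimal resolution of a generic double cover of $\mathbb{P}^2$ branched over three distinct concurrent lines and a cubic has a N\'eron--Severi lattice that one reads off from the configuration of exceptional and branch curves (a $D_4(-1)$-type configuration over the triple point, $(-2)$-curves over the line--cubic intersection points, the pulled-back hyperplane class, and the preimages of the three lines), and this too equals $\mathrm{P}^\vee$. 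Since the Van~\!Geemen-Sarti-Nikulin duality is an isomorphism at the level of moduli spaces, the six-dimensional family of such double planes coincides with the image of $\mathcal{M}_{\mathrm{P}}$, consistent with the explicit construction.

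The main obstacle is the birational reorganization: one must produce the explicit change of coordinates that turns the $2$-isogenous Weierstrass equation into the product form $\mathbf{s}^2 = g\, h$ with $g$ a product of three \emph{distinct} linear forms in only two of the three homogeneous coordinates --- the restriction to two coordinates being exactly what makes the three lines concurrent --- and keep the degree bookkeeping correct so that $\{g h = 0\}$ is a genuine sextic with only ADE singularities (hence the double cover is a K3). A secondary point is verifying the stated genericity: the three lines collide, the cubic acquires a singularity, or the cubic passes through the concurrence point only on proper closed subsets of the parameter space, which should be matched against the degeneration loci of the invariants discussed above.
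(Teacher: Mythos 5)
Your overall strategy---realize $j_{\mathcal{X}}$ as translation by the two-torsion section of the alternate fibration, pass to the fibre-wise $2$-isogenous Weierstrass model $Y^2 = X\bigl(X^2 - 2A(t)\,X + (A(t)^2-4B(t))\bigr)$, and then rewrite that model as a double plane---is the same as the paper's. But the step you yourself flag as ``the main obstacle'' is exactly where the essential idea lives, and the mechanism you propose for it cannot work. The right-hand side of the isogenous model has no factor depending only on the base coordinate: $X=0$ is the image of the two-torsion section, and $X^2 - 2AX + (A^2-4B)$ is an irreducible bisection (its discriminant in $X$ is $16B(t)$, generically not a square in $\mathbb{C}[t]$). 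Hence no weighted-homogeneous change of coordinates plus absorption of squares can produce $\mathbf{s}^2 = g(\mathbf{u},\mathbf{v})\,h(\mathbf{u},\mathbf{v},\mathbf{w})$ with $g$ a binary cubic in the base/fibre coordinates. The target form is correct---in the paper's final model the three lines are indeed cut out by a binary cubic in two linear forms---but those two forms are \emph{not} the base and fibre of the isogenous fibration: that fibration has singular fibres $I_2^* + 6I_2 + 4I_1$ (Lemma~\ref{lem:alternate_fibrations_extensions}), whereas the three concurrent lines are members of the pencil through their common point, i.e., they sit inside the three $I_0^*$ fibres of a \emph{different} Jacobian fibration on $\mathcal{Y}$ (the standard one, with fibres $3I_0^*+6I_1$). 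Passing between the two fibrations is unavoidable and requires a genuinely birational, fibration-changing transformation, not a coordinate re-weighting.

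The paper's device for this (Section~\ref{ssec:clingher_trick}) is the missing idea: one chooses a factorization of the Satake sextic $S(t)=A(t)^2-4B(t)=Q_1(t)Q_2(t)$ into two monic cubics (one of $\binom{6}{3}$ partitions of the roots), sets $F=t-\mu$, $G=t-\nu$, and solves linearly for $C,D,E$ so that $CF^2+DF+E=Q_1$, $CG^2+DG+E=Q_2$, and $E+CFG+\tfrac{D(F+G)}{2}=A$; an explicit birational substitution in $(X,Y)$ then converts the isogenous model into $y^2=(x-F)(x-G)(Cx^2+Dx+E)$, whose projective closure is the double cover branched over the lines $x=F$, $x=G$, the line at infinity (all concurrent), and the cubic $Cx^2+Dx+E=0$. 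Without the choice of splitting $S=Q_1Q_2$ and this transformation, your argument does not produce the three lines. Your lattice-theoretic consistency check in the final paragraph is sound as a cross-check---and the paper does prove the abstract converse in Proposition~\ref{lem:defining_eqn_3lines+cub_inverse}---but as written it presupposes both the computation of $\operatorname{NS}(\mathcal{Y})$ and the existence of a $3I_0^*+6I_1$ fibration on $\mathcal{Y}$, which is part of what must be established.
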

Surfaces $\mathcal{Y}$ form a special class of \emph{double sextic} K3 surfaces and constitute the family polarized by the lattice $R_{14} =  H  \oplus D_4(-1)^{\oplus 3}$. The converse of Theorem~\ref{thm:VGSNdual} also holds: given a cubic and three concurrent lines in $\mathbb{P}^2$, the K3 surface obtained as minimal resolution of the projective double cover with branch locus given by this curve configuration is the  van Geemen-Sarti-Nikulin dual of a K3 surface with a $P_{14}$-polarization. Moreover, the duality correspondence can be made completely explicit, as one can read the invariants $[  J_4 :  J'_4 : J_6:  J'_6 :  J_8:  J_{10}:  J_{12} ]$ in terms of the coefficients of the three lines and the cubic curve.  Should we restrict to the case $J'_4=0$ or $(J'_4, J'_6)=(0,0)$, the sextic curve configuration on the dual side gets enhanced slightly - the cubic curve acquires a point of tangency or a singularity, respectively, at one of the points of intersection with the three lines.
\subsection{The \texorpdfstring{$P'_{14}$}{P'}-polarized K3 surfaces}
The K3 surfaces polarized by the lattice $P'_{14}$ also fit into a family of projective quartic surfaces as follows: 
\begin{theorem}
\label{thm:pprime}
Let $(f_0,f_1,f_2, g_0, h_0,h_1,h_2) \in \mathbb{C}^{7}$. Consider the projective surface in $\mathbb{P}^3=\mathbb{P}(\mathbf{X}, \mathbf{Y}, \mathbf{Z}, \mathbf{W})$ defined by the homogeneous quartic equation
\beq
\label{quartic2}
\begin{split}
0= \mathbf{Y}^2 \mathbf{Z} \mathbf{W}-4 \mathbf{X}^3 \mathbf{Z}- 2 \Big(  \mathbf{W}^2 + f_2  \mathbf{W} \mathbf{Z} + h_2 \mathbf{Z}^2 \Big) \, \mathbf{X}^2 \phantom{\,.} \\
-  \Big(   f_1 \mathbf{W} \mathbf{Z} -h_2 \mathbf{W}^2 + h_1 \mathbf{Z}^2 \Big)  \,  \mathbf{X} \mathbf{Z}
- \frac{1}{2}  \Big(   f_0 \mathbf{W} \mathbf{Z} +   g_0 \mathbf{W}^2 + h_0 \mathbf{Z}^2 \Big)  \, \mathbf{Z}^2\,.
\end{split}
\eeq
Assuming general parameters,  the surface  $\mathcal{X}'$ obtained as the minimal resolution of (\ref{quartic2}) is a K3 surface endowed with a canonical  $P'_{14}$-polarization. Conversely, every $P'_{14}$-polarized K3 surface has a birational projective model given by Equation~(\ref{quartic2}). 
\end{theorem}
\noindent The result will be obtained as Theorem~\ref{thm_parity}, and the dual graph of all smooth rational curves will be determined in Theorem~\ref{thm:polarization14sd}.  In a manner similar to the case of a $P_{14}$-polarization, one may control when two members of the family are isomorphic, as lattice polarized surfaces.  In order to see this, we define the following invariants:
\beq
\label{eqn:CurlyJinvariants}
\begin{array}{c}
\mathcal{J}_2=f_2, \qquad \mathcal{J}_6=f_1,  \qquad  \mathcal{J}_8 = g_0 + h_1 - h_2^2, \qquad \mathcal{J}_{10} = f_0,\\[4pt]
\mathcal{J}_{12} = g_0 h_2 - h_1 h_2 + h_0, \qquad   \mathcal{J}_{16}= g_0 h_1 -  h_0 h_2, \qquad \mathcal{J}_{20} = g_0 h_0.
\end{array}
\eeq
Let then $\mathpzc{G}' \simeq \mathbb{C}^\times$ be the subgroup of $\operatorname{Aut}(\mathbb{C}^{7})$ given by the transformation
\beqn
 ( \mathcal{J}_2, \mathcal{J}_6, \mathcal{J}_8, \mathcal{J}_{10}, \mathcal{J}_{12}, \mathcal{J}_{16}, \mathcal{J}_{20} )  \ \to \ 
(\Lambda^{2} \mathcal{J}_2, \, \Lambda^6 \mathcal{J}_6, \, \Lambda^8 \mathcal{J}_8, \, \Lambda^{10} \mathcal{J}_{10}, \, \Lambda^{12} \mathcal{J}_{12}, \, \Lambda^{16} \mathcal{J}_{16}, \, \Lambda^{20} \mathcal{J}_{20}) \,,
\eeqn
with $\Lambda \in \mathbb{C}^\times$. Then, two K3 surfaces from the quartic family in Equation~(\ref{quartic2}) are isomorphic as $P'_{14}$-polarized K3 surfaces, if and only if their coefficients belong to the same orbit under the action of $\mathpzc{G}'$. The following will be proved as Theorem~\ref{thm:moduli_space_seldual}:
\begin{theorem}
\label{thm:intro22}
 The six-dimensional open analytic space
 \beq
 \label{eqn:moduli_space_PP}
 \mathscr{M}_{P'} = \left\{ 
\begin{array}{c} 
\Big[  \mathcal{J}_2 :  \mathcal{J}_6 : \mathcal{J}_8:  \mathcal{J}_{10} :  \mathcal{J}_{12}:  \mathcal{J}_{16}:  \mathcal{J}_{20}   \Big] \\
 \in \mathbb{WP}_{(2,6,8,10, 12,16,20)}   
 \end{array}
 \left\vert 
\begin{array}{l}
{ \scriptstyle \not \exists \, r, s \, \in \, \mathbb{C}: \  ( \mathcal{J}_2 ,  \mathcal{J}_6, \mathcal{J}_8,  \mathcal{J}_{10},  \mathcal{J}_{12},  \mathcal{J}_{16},  \mathcal{J}_{20} ) }\\
{ \scriptstyle \quad = \  (s^2, 2 r s^2, 10r^2, s^2r^2, -20r^3, -15r^4, - 4 r^5)}
\end{array}  \right. \right\} ,
\eeq
is a coarse moduli space of $P'_{14}$-polarized K3 surfaces. 
\end{theorem} 
$P'_{14}$-polarized K3 surfaces also form an interesting study case for the van Geemen-Sarti-Nikulin duality. A unique canonical primitive lattice embedding $H \oplus N \hookrightarrow P' _{14}$ exists, and hence, any $P'_{14}$-polarized K3 surface carries an underlying $H \oplus N$-polarization; we explain this point in Remark~\ref{rem:PP}. One has therefore an embedding 
\beqn
\mathscr{M}_{P'} \  \hookrightarrow  \ \mathscr{M}_{H\oplus N}   \,.
\eeqn
However, in contrast to the $P_{14}$-polarized case, $\mathscr{M}_{P'}$ is left invariant by the van Geemen-Sarti-Nikulin duality, and the dual of a $P'_{14}$-polarized K3 surface is again a $P'_{14}$-polarized surface. In Proposition~\ref{prop:involutionCD} we will show that this involution, denoted by
\beqn
 \imath_{\mathrm{vgsn}}'\colon \quad  \mathscr{M}_{P'}  \ \rightarrow \  \mathscr{M}_{P'} \,,
 \qquad \text{with} \quad \imath_{\mathrm{vgsn}} ' \circ \imath_{\mathrm{vgsn}} ' = {\rm id} \,,
\eeqn 
is given by:
\beq
\label{eqn:involutionCD22}
\imath_{\mathrm{vgsn}} ' : \; 
\left(
\begin{array}{lcl}
	\mathcal{J}_2 & \mapsto &
		 -\mathcal{J}_2 \,, \\[3pt]
	\mathcal{J}_6 & \mapsto &
		\phantom{-} \mathcal{J}_6 + \frac{1}{10} \mathcal{J}_2^3 \,, \\[3pt]
	\mathcal{J}_8  & \mapsto &
		\phantom{-} \mathcal{J}_8 - \frac{1}{2} \mathcal{J}_6 \mathcal{J}_2 - \frac{1}{40} \mathcal{J}_2^4 \,,\\[3pt]
	\mathcal{J}_{10} & \mapsto &
		-\mathcal{J}_{10} - \frac{1}{20} \mathcal{J}_6 \mathcal{J}_2^2 - \frac{1}{400} \mathcal{J}_2^5 \,, \\[3pt]
	\mathcal{J}_{12} & \mapsto &
		 - \mathcal{J}_{12} + \frac{1}{2} \mathcal{J}_{10} \mathcal{J}_2  - \frac{3}{20}  \mathcal{J}_8 \mathcal{J}_2^2 + \frac{1}{4} \mathcal{J}_6^2 
 		+ \frac{3}{40} \mathcal{J}_6 \mathcal{J}_2^3 + \frac{1}{400} \mathcal{J}_2^6 \,,\\[4pt]
	 \mathcal{J}_{16}  & \mapsto &
	 	\phantom{-}\mathcal{J}_{16} + \frac{1}{10} \mathcal{J}_{12} \mathcal{J}_2^2 - \frac{1}{2} \mathcal{J}_{10} \mathcal{J}_6 
		- \frac{1}{20} \mathcal{J}_{10} \mathcal{J}_2^3 +\frac{3}{400} \mathcal{J}_8 \mathcal{J}_2^4 \\[4pt]
		&& - \frac{1}{40} \mathcal{J}_6^2 \mathcal{J}_2^2  - \frac{3}{800} \mathcal{J}_6\mathcal{J}_2^5  - \frac{3}{3200} \mathcal{J}_2^8 \,, \\[4pt]
	 \mathcal{J}_{20}  & \mapsto &
		- \mathcal{J}_{20}  -\frac{1}{20}\mathcal{J}_{16} \mathcal{J}_2^2  - \frac{1}{400}\mathcal{J}_{12}  \mathcal{J}_2^4 + \frac{1}{4} \mathcal{J}_{10}^2
		 + \frac{1}{40} \mathcal{J}_{10} \mathcal{J}_6 \mathcal{J}_2^2 + \frac{1}{800} \mathcal{J}_{10} \mathcal{J}_2^5\\[4pt]
  && - \frac{1}{8000}\mathcal{J}_8 \mathcal{J}_2^6 + \frac{1}{1600} \mathcal{J}_6^2 \mathcal{J}_2^4  + \frac{1}{16000} \mathcal{J}_6 \mathcal{J}_2^7 + \frac{1}{800000} \mathcal{J}_2^{10} \,.
\end{array} \right)
\eeq
In Corollary~\ref{cor:selfdual_locus} it will be shown that the self-dual locus is given by
\beq
 \mathcal{J}_2 = 0 \,, \qquad \mathcal{J}_{10} = 0\,, \qquad \mathcal{J}_{20} = 0 \,, \qquad \mathcal{J}_6^2 - 8 \mathcal{J}_{12} = 0 \,.
\eeq
\subsection{The \texorpdfstring{$P''_{14}$}{P''}-polarized K3 surfaces}
The case of a $P''_{14}$-polarization was previously studied by Vinberg  \cite{MR2718942}.  Following Vinberg's notation, we start with a 7-tuple    $( f_{1,2}, f_{2,2}, f_{1,3}, f_{2,3}, f_{3,3}, g_1, g_3) \in \mathbb{C}^{7}$.  We consider the projective surface in $\mathbb{P}^3 = \mathbb{P}(\mathbf{x}_0, \mathbf{x}_1, \mathbf{x}_2, \mathbf{x}_3)$ defined by the homogeneous quartic equation
\beq
\label{eqn:Vinberg_body1}
  \mathbf{x}_0^2 \mathbf{x}_2 \mathbf{x}_3 - 4 \mathbf{x}_1^3 \mathbf{x}_3 - \mathbf{x}_2^4 - \mathbf{x}_1 \mathbf{x}_3^2 \, g(\mathbf{x}_0, \mathbf{x}_1, \mathbf{x}_3) - \mathbf{x}_2 \mathbf{x}_3 \, f(\mathbf{x}_1, \mathbf{x}_2, \mathbf{x}_3) = 0 \,,
\eeq
with
\beq
  g =  g_1 \mathbf{x}_1 + g_3 \mathbf{x}_3 \,, \qquad 
  f =  f_{12} \mathbf{x}_1 \mathbf{x}_2 + f_{22} \mathbf{x}_2^2  + f_{13} \mathbf{x}_1 \mathbf{x}_3  + f_{23} \mathbf{x}_2 \mathbf{x}_3 + f_{33} \mathbf{x}_3^2  \,.
\eeq
One then has:
\begin{theorem}
\label{thm_Vinberg_body}
Assuming general parameters,  the surface  $\mathcal{X}''$ obtained as the minimal resolution of (\ref{eqn:Vinberg_body1}) is a K3 surface endowed with a canonical $P''_{14}$-polarization. Conversely, every $P''_{14}$-polarized K3 surface has a birational projective model given by Equation~(\ref{eqn:Vinberg_body1}). 
 \end{theorem}
The result will be obtained as Theorem~\ref{thm_Vinberg}, and the dual graph of all smooth rational curves will be determined in Theorem~\ref{thm:polarization14vin}.  Two members of the above family are isomorphic if and only if their coefficient sets are related by a transformation in $\mathpzc{G}'' \simeq \mathbb{C}^\times$, given by
\beq
\begin{array}{l}
 \Big( f_{1,2}, f_{2,2}, g_1, f_{1,3}, f_{2,3}, g_3, f_{3,3}\Big) \  \mapsto \ \\[4pt]
   \qquad \qquad \Big( \Lambda^4 f_{1,2},  \Lambda^6 f_{2,2},  \Lambda^8 g_1,  \Lambda^{10} f_{1,3},  \Lambda^{12} f_{2,3},  \Lambda^{16} g_3,  \Lambda^{18} f_{3,3}\Big) \,,
\end{array}
\eeq
for $\Lambda \in \mathbb{C}^\times$. This fact leads one to define invariants associated to the K3 surfaces in the family, namely 
\beq
\label{eqn:inv_PPP}
 \mathscr{J}_4 = f_{1,2},  \  \mathscr{J}_{6}=f_{2,2}, \  \mathscr{J}_8=g_1, \
 \mathscr{J}_{10} = f_{1,3},  \  \mathscr{J}_{12} = f_{2,3} , \  \mathscr{J}_{16}=g_3 , \  \mathscr{J}_{18} = f_{3,3} . 
 \eeq
 In this context, the following will be proved as part of Theorem~\ref{thm:Vinberg_moduli}:
 \begin{theorem}
\label{thm:Vinberg_moduli2}
The six-dimensional open analytic space
\beqn
\mathscr{M}_{P''} = 
 \left\{ 
 \left.
\begin{array}{c} 
\Big[  \mathscr{J}_4 :  \mathscr{J}_6 : \mathscr{J}_8 : \mathscr{J}_{10} : \mathscr{J}_{12} : \mathscr{J}_{16} : \mathscr{J}_{18}  \Big]  \\
 \in  \mathbb{WP}_{(4,6,8,10,12, 16,18)}  
 \end{array}
 \right\vert \
( \mathscr{J}_8 ,  \mathscr{J}_{10},  \mathscr{J}_{12},  \mathscr{J}_{16}, \mathscr{J}_{18} ) \neq 0 
 \right\} ,
\eeqn
is a coarse moduli space of $P''_{14}$-polarized K3 surfaces.
\end{theorem}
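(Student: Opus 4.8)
The plan is to realize the space in the statement as the image of the explicit family from Theorem~\ref{thm_Vinberg_body} and to identify it with the coarse moduli space $\mathcal{M}_{\mathrm{P}''}$, whose existence as a normal quasi-projective variety of dimension $20-14=6$ — together with the property that every analytic family of $\mathrm{P}''$-polarized K3 surfaces induces a morphism to it — is guaranteed by Dolgachev's theorem~\cite{MR1420220}. First I would let $U\subset\mathbb{C}^7$ be the open set of $7$-tuples $(f_{1,2},f_{2,2},f_{1,3},f_{2,3},f_{3,3},g_1,g_3)$ with $(f_{1,3},f_{2,3},f_{3,3},g_1,g_3)\neq\vec{0}$, and let $\mathscr X\to U$ be the tautological family whose fibre over a tuple is the minimal resolution of $\mathrm{Q}''$. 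By Theorem~\ref{thm_Vinberg_body} each fibre is a $\mathrm{P}''$-polarized K3 surface, and a generating set of $i(\mathrm{P}'')$ — hyperplane sections, lines lying on $\mathrm{Q}''$, and exceptional curves of the resolution — can be chosen to vary algebraically over $U$, so that $\mathscr X\to U$ is a family of $\mathrm{P}''$-polarized K3 surfaces. It therefore induces a morphism $\psi\colon U\to\mathcal{M}_{\mathrm{P}''}$, which is surjective on points by the ``conversely'' clause of Theorem~\ref{thm_Vinberg_body}.

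Next I would introduce the reparametrization group $\mathfrak{G}''\cong\mathbb{C}^\times$, acting on $\mathbb{P}^3$ by $[\mathbf{x}_0:\mathbf{x}_1:\mathbf{x}_2:\mathbf{x}_3]\mapsto[\Lambda^{-9}\mathbf{x}_0:\Lambda^{-8}\mathbf{x}_1:\Lambda^{-6}\mathbf{x}_2:\mathbf{x}_3]$. A one-line computation shows that the three ``frame'' monomials $\mathbf{x}_0^2\mathbf{x}_2\mathbf{x}_3$, $\mathbf{x}_1^3\mathbf{x}_3$, $\mathbf{x}_2^4$ are all rescaled by the common factor $\Lambda^{-24}$, so this transformation carries $\mathrm{Q}''(f_{1,2},\dots,g_3)$ onto a quartic of the same shape with a new tuple on which $(\mathscr{J}_4,\mathscr{J}_6,\mathscr{J}_8,\mathscr{J}_{10},\mathscr{J}_{12},\mathscr{J}_{16},\mathscr{J}_{18})$ is scaled with the respective weights $(4,6,8,10,12,16,18)$. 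A projective linear change of coordinates of $\mathbb{P}^3$ maps one quartic model isomorphically onto the other and respects the induced polarization, so $\mathfrak{G}''$-equivalent tuples have isomorphic fibres and $\psi$ factors through $U\to U/\mathfrak{G}''$.

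The substantial step is the converse: given an isomorphism $\alpha\colon\mathscr X_{t_1}\xrightarrow{\ \sim\ }\mathscr X_{t_2}$ of $\mathrm{P}''$-polarized K3 surfaces, show that $t_1$ and $t_2$ lie in one $\mathfrak{G}''$-orbit. Here I would exploit the Jacobian elliptic pencil that the $\mathrm{P}''$-polarization picks out (uniquely up to the small group $O(\mathrm{P}'')$): on the model $\mathrm{Q}''$ it is cut by the planes $\{c\,\mathbf{x}_2-d\,\mathbf{x}_3=0\}$ through the line $\{\mathbf{x}_2=\mathbf{x}_3=0\}\subset\mathrm{Q}''$, with zero section $[1:0:0:0]$ and with the two distinguished reducible fibres located over $[\mathbf{x}_2:\mathbf{x}_3]=[0:1]$ and $[1:0]$; since these two fibres have inequivalent types, $\mathfrak{G}''$ has no discrete part. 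This is the fibration Vinberg uses in~\cite{MR2718942}. Because $\alpha$ respects the polarization it carries this pencil, its section and its marked fibres to their counterparts, inducing an isomorphism of the associated twisted Weierstrass models over $\mathbb{P}^1$; the only such isomorphisms are rescalings of the Weierstrass coordinates composed with automorphisms of the base fixing the two marked points, and comparing these with the normal form~\eqref{eqn:Vinberg_body1} pins $\alpha$ down to an element of $\mathfrak{G}''$. Hence the fibres of $\psi$ are exactly the $\mathfrak{G}''$-orbits, so $\psi$ descends to a bijective morphism $\bar\psi\colon U/\mathfrak{G}''\to\mathcal{M}_{\mathrm{P}''}$. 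Since all of the weights $4,6,8,10,12,16,18$ are positive and $U$ is the $\mathbb{C}^\times$-invariant complement of the coordinate subspace $\{\mathscr{J}_8=\mathscr{J}_{10}=\mathscr{J}_{12}=\mathscr{J}_{16}=\mathscr{J}_{18}=0\}$ — which is exactly the locus $g_1=f_{1,3}=f_{2,3}=g_3=f_{3,3}=0$ — every $\mathbb{C}^\times$-orbit in $U$ is closed, the geometric quotient exists, and $U/\mathfrak{G}''$ is precisely the open subvariety $\mathcal{M}$ of $\mathbb{WP}_{(4,6,8,10,12,16,18)}$ appearing in the statement. Finally $\mathcal{M}$ and $\mathcal{M}_{\mathrm{P}''}$ are normal varieties of the same dimension over $\mathbb{C}$ and $\bar\psi$ is a bijective morphism, hence birational, and therefore an isomorphism by Zariski's main theorem; as $\mathcal{M}_{\mathrm{P}''}$ is a coarse moduli space by~\cite{MR1420220}, so is $\mathcal{M}$.

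I expect the main obstacle to be the converse half of the isomorphism criterion, that is, establishing that~\eqref{eqn:Vinberg_body1} is a rigid normal form. One must check that the distinguished elliptic pencil is genuinely intrinsic to the $\mathrm{P}''$-polarization — so that no element of $O(\mathrm{P}'')$ moves it to an inequivalent fibration yielding a different birational projective model — and that the residual discrete freedom (base M\"obius maps preserving the two marked fibres, together with Weierstrass automorphisms) is entirely absorbed into the $\mathbb{C}^\times$-action and creates no further identifications; this is precisely the point at which Vinberg's analysis of this lattice in~\cite{MR2718942} enters. A secondary, bookkeeping point is to confirm that the non-vanishing hypothesis of Theorem~\ref{thm_Vinberg_body} coincides verbatim with the open condition of the statement, which amounts to the identification $(\mathscr{J}_8,\mathscr{J}_{10},\mathscr{J}_{12},\mathscr{J}_{16},\mathscr{J}_{18})=(g_1,f_{1,3},f_{2,3},g_3,f_{3,3})$.
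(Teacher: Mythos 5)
Your proposal is correct and follows essentially the same route as the paper: realize every $\mathrm{P}''$-polarized surface by the quartic normal form of Theorem~\ref{thm_Vinberg_body}, use the uniqueness of the $E_8+D_4$ Jacobian elliptic fibration (Proposition~\ref{cor:EFS4}, via Shimada's list) together with the inequivalence of the two distinguished fibres over $[\mathbf{x}_2:\mathbf{x}_3]=[0:1]$ and $[1:0]$ to rigidify the model up to the weighted $\mathbb{C}^\times$-rescaling, and identify the quotient with the stated open subset of $\mathbb{WP}_{(4,6,8,10,12,16,18)}$. The only divergence is in the final packaging: the paper closes by invoking Vinberg's theorem that these invariants freely generate the algebra of automorphic forms on $\mathcal{D}_6$ (after first treating the eight-parameter family with $g_0\neq 0$ and restricting to $g_0=0$), whereas you instead compare with Dolgachev's coarse moduli space via a bijective morphism of normal varieties and Zariski's main theorem -- an acceptable substitute that places the burden on the isomorphism criterion you correctly single out as the crux.
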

Should one set $\mathscr{J}_{16}=0$ in the above context, the $P''_{14}$-polarization is enhanced to $H \oplus E_8(-1) \oplus D_5(-1)$. Furthermore, the locus given by $\mathscr{J}_{16}=\mathscr{J}_{18}=0$ corresponds to $H \oplus E_8(-1) \oplus D_6(-1)$-polarized K3 surfaces. The latter case was previously studied by the authors in \cite{MR4015343}.  Finally, we note that $P''_{14}$-polarized K3 surfaces have no significance from the point of view of the van Geemen-Sarti-Nikulin duality, as $H \oplus N$ has no embedding into $P''_{14}$.  

\subsection{Motivation and general overview}
\label{ssec:general_discussion}
This article extends previous work of the authors and their collaborators for K3 surfaces of high Picard rank \cites{MR2369941, MR2824841, CM:2018b, MR3767270, MR2935386, MR2854198, MR3366121, MR3712162,  MR3798883, MR3995925, MR3992148, MR4015343, MR4099481, CHM19, MR4160930, CMS:2020}. The present study also builds on several other works \cites{MR0429917, MR1013073, MR894512, MR1023921, MR1877757, MR1013162, MR1458752, MR2409557, MR1703212, MR2427457, MR3263663, MR2306633, MR728142, MR3563178, MR3010125}. The nontrivial connection between families of K3 surfaces, their polarizing lattices, and compatible automorphic forms appears in string theory as the eight-dimensional manifestation of the phenomenon called the F-theory/heterotic string duality. This viewpoint has been studied in \cites{MR3366121, MR3274790, MR3712162, MR3417046, MR3933163, MR4160930, MR3991815}.
\par In Picard rank eighteen, a Kummer surface $\mathcal{Y}=\operatorname{Kum}(E_1 \times E_2)$ associated with two non-isogenous elliptic curves $E_1, E_2$ admits several inequivalent elliptic fibrations; see \cites{MR1013073, MR2409557}. It follows\footnote{There is an elliptic fibration with trivial Mordell-Weil group and singular fibers $II^*+ 2 I_0^* + 2 I_1$, labelled $\mathcal{J}_9$ in \cite{MR2409557}. In addition, fibrations $\mathcal{J}_{10}$, $\mathcal{J}_{11}$ provide the equivalent descriptions of the lattice.} that these Kummer surfaces are polarized by the lattice 
\beqn
 H \oplus E_8(-1) \oplus D_4(-1)^{\oplus 2} \cong H \oplus D_{12}(-1) \oplus D_4(-1) \cong H \oplus D_8(-1)^{\oplus 2} \,. 
\eeqn
The surfaces $\mathcal{Y}$ admit an alternate fibration with a Mordell-Weil group that contains a 2-torsion section, and a van Geemen-Sarti involution can be constructed. New K3 surfaces $\mathcal{X}$ are then obtained via the Nikulin construction. We shall refer to $\mathcal{X}$ as the \emph{Inose K3 surfaces} as they admit a birational model isomorphic to a projective quartic surface introduced by Inose \cite{MR578868}. They are polarized by the lattice $H \oplus E_8(-1) \oplus E_8(-1)$; see \cite{MR2369941}. 
\par The entire picture generalizes to Picard rank seventeen: here, the elliptic fibrations on the Jacobian Kummer surfaces $\mathcal{Y}$ were classified in \cite{MR3263663}, and the Kummer surfaces are polarized by the lattice\footnote{This follows from the existence of fibrations {\tt (15)} and {\tt (17)} in \cite{MR3263663}.} 
\beqn
 H \oplus D_7(-1) \oplus D_4(-1)^{\oplus 2} \cong H \oplus D_8(-1) \oplus D_4(-1) \oplus A_3(-1) \,.
\eeqn
The (generalized) Inose K3 surfaces $\mathcal{X}$ are obtained in a similar manner as before and polarized by the rank seventeen lattice $H \oplus E_8(-1) \oplus E_7(-1)$;  the details may be found in \cites{MR2427457, MR2824841, MR2935386}. The Inose K3 surfaces $\mathcal{X}$ can also be viewed as K3 surfaces admitting \emph{Shioda-Inose structures}; see \cites{MR728142, MR2279280, MR3087091}.
\par Aspects of this construction were generalized for K3 surfaces of lower Picard rank in \cites{MR2824841, MR2254405, MR4015343, CHM19}. Since there are no Kummer surfaces of Picard rank lower than seventeen, those needed to be replaced by other K3 surfaces; a suitable choice for Picard number sixteen turned out to be the surfaces $\mathcal{Y}$ obtained as double covers of the projective plane branched over the union of six lines. In this way, the rank-seventeen case is recovered by making the six lines tangent to a common conic. The surfaces $\mathcal{Y}$ are polarized\footnote{This follows from the existence of fibration  {\tt (2.10)} in \cite{MR2254405}.} by the lattice $H \oplus D_6(-1) \oplus D_4(-1)^{\oplus 2}$. Their moduli are well understood and are related to Abelian fourfolds of Weil type~\cites{MR3506391, MR1335243}. Via the van Geemen-Sarti-Nikulin duality one obtains the (generalized) Inose K3 surfaces $\mathcal{X}$ of Picard rank sixteen which are polarized by the lattice $H \oplus E_8(-1) \oplus D_6(-1)$; see \cite{CHM19}.  
\par The cases discussed above share some commonalities: (i) the double sextic K3 surfaces $\mathcal{Y}$  have a concrete geometric construction, derived from special reducible projective sextic curves that form their branch loci; (ii) the Inose K3 surfaces $\mathcal{X}$ are polarized by 2-elementary lattices. 

The present work originated in the authors’ effort to extend the above construction to K3 families of Picard rank lower than $16$. We were initially able to explicitly described the behavior of the van Geemen-Sarti-Nikulin duality in the context of K3 surfaces $ \mathcal{X}$ polarized by the lattice $H \oplus E_7(-1) \oplus D_6(-1)$. Subsequently, we realized that our arguments may be extended to the $ H \oplus E_7(-1) \oplus D_4(-1) \oplus A_1(-1) $ polarization. A summary of this extension is presented in Table~\ref{table:cases}. Ultimately, we were able to obtain an explicit classification of K3 surfaces $\mathcal{X}$, extending to all possible rank-fourteen lattice polarizations of 2-elementary type.    
\begin{table}
\beqn
\begin{array}{|l|l|l|}
\hline
\text{rank}				& \text{Inose K3 surface} \, \mathcal{X}					& \text{double sextic K3 surface} \, \mathcal{Y} \\
\hdashline
					& \text{polarizing lattice \& discriminant}	 				& \text{polarizing lattice \& construction} \\
\hdashline
					& \multicolumn{2}{c|}{\text{applicable moduli in Theorems~\ref{thm:intro} and~\ref{thm:intro2}}} \\
\hline 
\rho = 14				& H \oplus E_8(-1) \oplus A_1(-1)^{\oplus 4}				& H  \oplus D_4(-1)^{\oplus 3}\\
					& D = \mathbb{Z}_2^4								& \text{double sextic of 3 lines and cubic}\\
\hdashline

					& \multicolumn{2}{c|}{[  J_4 :  J'_4:   J_6:  J'_6:  J_8: J_{10}: J_{12} ]  \; \text{or} \;  (\alpha, \beta, \gamma, \delta , \varepsilon, \zeta, \eta, \iota, \kappa, \lambda) } \\
\hline
\rho = 15				& H \oplus E_8(-1) \oplus D_4(-1) \oplus A_1(-1)			& H \oplus D_5(-1) \oplus D_4(-1)^{\oplus 2}\\
					& D = \mathbb{Z}_2^3								& \text{double sextic of 3 lines and tangent cubic} \\
\hdashline
					& \multicolumn{2}{c|}{J'_4=0 \; \text{or} \; (\kappa, \lambda) = (0,1) } \\

\hline 
\rho = 16				& H \oplus E_8(-1) \oplus D_6(-1)						& H \oplus D_6(-1) \oplus D_4(-1)^{\oplus 2}\\
					& D = \mathbb{Z}_2^2								& \text{double sextic of 6 lines} \\
\hdashline
					& \multicolumn{2}{c|}{J'_4 = J'_6 =0 \; \text{or} \; (\eta, \iota)  = (\kappa, \lambda) = (0,1) } \\
\hline
\rho = 17				& H \oplus E_8(-1) \oplus E_7(-1)						& H \oplus D_7(-1) \oplus D_4(-1)^{\oplus 2}\\
					& D = \mathbb{Z}_2									& \text{Jacobian Kummer surface} \\
\hdashline
					& \multicolumn{2}{c|}{J'_4=J'_6 = J_8 =0 \; \text{or} \; (\eta, \iota)  = (\kappa, \lambda) = (\epsilon, \zeta) = (0,1) } \\
\hline
\rho = 18				& H \oplus E_8(-1) \oplus E_8(-1)						& H \oplus E_8(-1) \oplus D_4(-1)^{\oplus 2}\\
					&  D = \{ \mathbb{I} \}								& \text{Kummer surface} \, \operatorname{Kum}(E_1 \times E_2)\\
\hdashline
					& \multicolumn{2}{c|}{J'_4=J'_6 = J_8 = J_{10} = 0\; \text{or} \; (\eta, \iota)  = (\kappa, \lambda) = (\epsilon, \zeta) =(\gamma,\delta) = (0,1) } \\
\hline		
\end{array}
\eeqn
\caption{van Geemen-Sarti-Nikulin duality for K3 surfaces}
\label{table:cases}
\end{table}
\par In the situation above, a description of the moduli space for Picard number seventeen and sixteen in terms of suitable Siegel modular forms or automorphic forms was given in \cites{MR2682724, MR3235787, MR4015343,Nagano2020}.  Let us also connect our previous discussion with Vinberg's seminal work in \cite{MR2718942}: considering algebras of automorphic forms on the bounded symmetric domains of type $IV$, the author constructed families of  K3 surfaces of Picard rank $20-n$ for $4 \le n \le 7$ whose moduli spaces have a function field freely generated by the modular forms on the $n$-dimensional symmetric domain $\mathpzc{D}_n=D_{IV}(n)$ of type $IV$ with respect to the lattice $\Gamma_n=O(2,n; \mathbb{Z})^+$, i.e., all matrices with integer entries in $O(2,n)^+$. Here, the plus sign refers to a certain index-two subgroup of the pseudo-orthogonal group $O(2,n)$. The natural algebra of automorphic forms $A(\mathpzc{D}_n, \Gamma_n)$ on $\mathpzc{D}_n$ with respect to $\Gamma_n$ is freely generated by forms of the weights indicated in the following table:
\beq
\label{eqn:generators}
 \begin{array}{c|l} 
 n & \text{weights} \\
 \hline
 4 & 4, 6, 8, 10, 12 \\
 5 & 4, 6, 8, 10, 12, 18 \\
 6 & 4, 6, 8, 10, 12, 16, 18 \\
 7 & 4, 6, 8, 10, 12, 14, 16, 18
 \end{array}
\eeq
The corresponding K3 surfaces were obtained as families of quartic projective surfaces in \cite{MR2718942}.  As we will prove in Theorem~\ref{thm_Vinberg}, these families of K3 surfaces are polarized by the following lattices:
\beq
\label{eqn:lattices}
 \begin{array}{c|l} 
 n & \text{polarizing lattice} \\
 \hline
 4 & H \oplus E_8(-1) \oplus D_6(-1)\\
 5 & H \oplus E_8(-1) \oplus D_5(-1) \\
 6 & H \oplus E_8(-1) \oplus D_4(-1) \\
 7 & H \oplus E_8(-1) \oplus A_3(-1)
 \end{array}
\eeq
We will prove in Theorem~\ref{thm_Vinberg} that for $5 \le n \le 7$ the corresponding K3 surfaces admit exactly two Jacobian elliptic fibrations, both with a trivial Mordell-Weil group.  Since there is no elliptic fibration with a Mordell-Weil group containing a 2-torsion section, there is no notion of van Geemen-Sarti-Nikulin duality in this case.  However, for $n=4$, the Vinberg family coincides with the family in Equation~(\ref{quartic1}) for $(\eta,\iota)=(\kappa, \lambda)=(0,1)$; see Proposition~\ref{prop:coincide}. The invariants defined in Theorem~\ref{thm:intro2} are then precisely the generators of $A(\mathpzc{D}_4, \Gamma_4)$ in Equation~(\ref{eqn:generators}) defined by Vinberg.  The explicit expressions for these generators in terms of automorphic forms and theta function were given in \cites{MR1103969, MR1204828, MR4015343} and are a direct consequence of the coincidence of two different bounded symmetric domains, namely the domains $D_{IV}(4)$ and $I_{2,2}$.  
\medskip
\par This article is structured as follows: In Section~\ref{sec:lattice} we carry out  a brief lattice-theoretic investigation regarding the possible Jacobian elliptic fibrations appearing on the surfaces $\mathcal{X}$, $\mathcal{X}'$, and $\mathcal{X}''$ in Theorems~\ref{thm:intro},~\ref{thm:pprime}, and~\ref{thm_Vinberg_body}, respectively. We then show that the existence of a unique alternate fibration on $\mathcal{X}$ and $\mathcal{X}'$ allows for the construction of their coarse moduli spaces. In Section~\ref{sec:projective_models} we construct birational projective models for the K3 surfaces $\mathcal{X}$, $\mathcal{X}'$, and $\mathcal{X}''$ with N\'eron-Severi lattices $P_{14}$, $P'_{14}$, and $P''_{14}$, respectively. In Section~\ref{sec:dual_graphs} we determine the dual graphs of all smooth rational curves. To our knowledge, for a $P_{14}$-polarization or $P'_{14}$-polarization these graphs have not appeared in the literature previously. In Section~\ref{sec:Kummer} we  construct the family of K3 surfaces $\mathcal{Y}$, obtained from the family of Inose K3 surfaces $\mathcal{X}$ using the van Geemen-Sarti-Nikulin duality. In Appendix~\ref{sec:rank15} we determine the dual graph of  rational curves on a general K3 surface $\mathcal{X}$ of Picard number 15.
\subsection{Acknowledgments}
The authors would like to thank the referees for their insightful comments, in particular with regards to improving the exposition in Sections~2 and~5. A.C. acknowledges support from a UMSL Mid-Career Research Grant. A.M. acknowledges support from the Simons Foundation through grant no.~202367.
\section{Lattice theoretic considerations for certain K3 surfaces}
\label{sec:lattice}
We start with a brief lattice-theoretic investigation regarding the possible Jacobian elliptic fibration structures appearing on the surface $\mathcal{X}$, $\mathcal{X}'$, and $\mathcal{X}''$ . Recall that a \emph{Jacobian elliptic fibration} on $\mathcal{X}$ is a pair $(\pi,\sigma)$ consisting of a proper map of analytic spaces $\pi\colon \mathcal{X} \to \mathbb{P}^1$, whose general fiber is a smooth curve of genus one, and a section $\sigma\colon \mathbb{P}^1 \to \mathcal{X}$ in the elliptic fibration $\pi$. The group of section of the Jacobian fibration is the \emph{Mordell-Weil group} $\operatorname{MW}(\pi,\sigma)$.
\subsection{Jacobian elliptic fibrations}
Nikulin  \cites{MR544937,MR633160,MR633160b,MR752938,MR3165023} determined the hyperbolic 2-elementary primitive sub-lattices of the K3 lattice which are the Picard lattice of K3 surfaces with finite automorphism groups. There are exactly three such lattices of rank 14, namely the lattices in~(\ref{eqn:lattices_intro}).  We observe that there are two different 2-elementary lattices whose determinant of the discriminant form is $2^4$, but they have different parity, as defined in \cites{MR728992,MR1029967}. 
\par We state the following lemmas covering the rank-fourteen lattices that are the main topic of this article. For convenience, we also include two lattices of other ranks for which similar results apply.
\begin{lemma}
\label{lem:lattice}
Let $\mathcal{X}$ be a general $L$-polarized K3 surface where $L$ is a lattice in~(\ref{eqn:lattices_list}) with the given $\operatorname{rank}$, signature $\operatorname{sign}$, and discriminant group $D(L)$:
\beq
\label{eqn:lattices_list}
 \begin{array}{rl|c|c|c}
 \multicolumn{2}{c|}{ L}  & \operatorname{rank} & \operatorname{sign} & D(L) \\
 \hline
 &&&&\\[-1em]
  P''_{13} = & H \oplus E_8(-1) \oplus A_3(-1)  			& 13 & (1,12) & \mathbb{Z}_4\\
 \hdashline
 P''_{14} =& H \oplus E_8(-1) \oplus D_4(-1) 			& 14 & (1,13) & \mathbb{Z}_2^2\\
 P'_{14} =& H \oplus D_8(-1) \oplus D_4(-1) 			& 14 & (1,13) & \mathbb{Z}_2^4\\
 P_{14} =& H \oplus E_8(-1) \oplus A_1(-1)^{\oplus 4} 	& 14 & (1,13) & \mathbb{Z}_2^4\\
 \hdashline
  P_{15} =& H \oplus E_7(-1) \oplus D_6(-1)  			& 15 & (1,14) & \mathbb{Z}_2^3
 \end{array}
\eeq
Let $(\pi,\sigma)$ be a Jacobian elliptic fibration on $\mathcal{X}$. Then, the Mordell-Weil group has finite order. In particular, we have
\beq
 \operatorname{rank} \operatorname{MW}(\pi,\sigma) = 0 \,.
\eeq
\end{lemma} 
\begin{proof}
For a given $\operatorname{NS}(\mathcal{X})$, it follows, via work of Nikulin \cites{MR544937,MR633160,MR633160b,MR752938,MR3165023} and Kondo \cite{MR1029967}, that the group of automorphisms of $\mathcal{X}$ is finite. We have $\operatorname{Aut}(\mathcal{X}) \simeq \mathbb{Z}_2 \times \mathbb{Z}_2$ for the last four cases and $\operatorname{Aut}(\mathcal{X}) \simeq \mathbb{Z}_2$ for the first one.  Any Jacobian elliptic fibration on $\mathcal{X}$ must have a Mordell-Weil group of finite order and cannot admit an infinite-order section.
\end{proof}
\par Given a Jacobian elliptic fibration $(\pi,\sigma)$ on $\mathcal{X}$, the classes of fiber and section span a rank-two primitive sub-lattice of $\operatorname{NS}(\mathcal{X})$ which is isomorphic to the standard rank-two hyperbolic lattice $H$. The converse also holds: given a primitive lattice embedding $H \hookrightarrow \operatorname{NS}(\mathcal{X})$ whose image contains a pseudo-ample class, it is known from \cite{MR2355598}*{Thm.~2.3} that there exists a Jacobian elliptic fibration on the surface $\mathcal{X}$, whose fiber and section classes span $H$. For a primitive lattice embedding $j \colon H \hookrightarrow L$  we denote by $K= j(H)^{\perp}$ the orthogonal complement in $L$ and by $K^{\text{root}}$ the sub-lattice spanned by the roots of $K$, i.e., the algebraic class of self-intersection $-2$  in $K$. We also introduce the factor group $\mathpzc{W}= K/ K^{\text{root}}$.  The pair $(K^{\text{root}}(-1), \mathpzc{W})$ is called the \emph{frame} associated with the Jacobian elliptic fibration.  

The classification of Jacobian elliptic fibrations on K3 surfaces with 2-elementary Picard lattice given in \cite{MR4130832} immediately implies the following:
\begin{lemma}
\label{prop:ADE}
Let $L$ be a lattice in~(\ref{eqn:lattices_list}). A general $L$-polarized K3 surface admits exactly the Jacobian elliptic fibrations $(\pi,\sigma)$, up to isomorphism, associated with the following frames:
\begin{enumerate}
\item For $L=P''_{13}$ the inequivalent frames are
\beqn
  (E_8  \oplus A_3, \mathbb{I}),   \qquad (D_{11} , \mathbb{I}).
\eeqn
\item For $L=P''_{14}$ the inequivalent frames are
\beqn
  (E_8  \oplus D_4, \mathbb{I}),   \qquad (D_{12} , \mathbb{I}).
\eeqn
\item For $L=P'_{14}$ the inequivalent frames are
\beqn
 (D_8  \oplus D_4, \mathbb{I}) ,  \qquad  (E_7  \oplus A_1 ^{\oplus 5}, \mathbb{Z}/2\mathbb{Z}).
\eeqn  
\item For $L=P_{14}$ the inequivalent frames are
\beqn
 ( D_6 \oplus D_6, \mathbb{I}) ,  \ (D_{10}  \oplus A_1 ^{\oplus 2}, \mathbb{I}) ,  \ (E_7  \oplus D_4   \oplus A_1, \mathbb{I}) ,   \ (E_8  \oplus A_1 ^{\oplus 4}, \mathbb{I}), \ (D_{8} \oplus A_1^{\oplus 4}, \mathbb{Z}/2\mathbb{Z}).
\eeqn
\item For $L=P_{15}$  the inequivalent frames are
\beqn
 (E_7 \oplus D_6, \mathbb{I}) , \ \ (E_8 \oplus D_4 \oplus A_1, \mathbb{I}) , \ \ (D_{12}  \oplus A_1, \mathbb{I}), \ \ ( D_{10} \oplus A_1^{\oplus 3}, \mathbb{Z}/2\mathbb{Z}) .
\eeqn
\end{enumerate}
\end{lemma}
\par Lemma~\ref{prop:ADE} classifies the root lattices attached to the reducible fibers and the Mordell-Weil group associated with a Jacobian elliptic fibration. One can also ask for a more precise classification, up to automorphisms of the surfaces. The differences between the possible classifications is explained in detail in \cite{Braun:2013aa}. In the situation above, assume that we have a second primitive embedding $j' \colon H \hookrightarrow L$, such that the orthogonal complement of the image $j'(H)$, is isomorphic to the lattice $K$. One would like to see whether $j$ and $j'$ correspond to Jacobian elliptic fibrations isomorphic under $\mathrm{Aut}(\mathcal{X})$ or not. The number of distinct primitive lattice embeddings $H \hookrightarrow L$ is referred by Festi and Veniani as the \emph{multiplicity} of the frame.   As proved in  \cite{FestiVeniani20}*{Thm~2.8}, the multiplicity of the frame of an elliptic fibration can be computed using lattice-theoretic arguments.
We have the following:
\begin{proposition}
\label{cor:EFS}
\label{cor:EFS2}
\label{cor:EFS3}
\label{cor:EFS4}
\label{cor:EFS5}
Let $L$ be a lattice in~(\ref{eqn:lattices_list}).  For a general $L$-polarized K3 surface $\mathcal{X}$ the multiplicity associated with the following frames equals one:
\beqn
\label{eqn:lattices_list2}
 \begin{array}{rl|ll}
 \multicolumn{2}{c|}{ L}  &  \multicolumn{2}{|l}{ \big(K^{\text{root}}(-1), \mathpzc{W}\big) \ \text{with multiplicity 1}}\\[0.1em]
 \hline
 &&\\[-1em]
 P''_{13} =& H \oplus E_8(-1) \oplus A_3(-1)  			&  \big( E_8 \oplus A_3, \{ \mathbb{I} \}\big)&\\
 \hdashline
 P''_{14} =& H \oplus E_8(-1) \oplus D_4(-1) 					&  \big( E_8 \oplus D_4, \{ \mathbb{I} \}\big)&\\
 P'_{14} =& H \oplus D_8(-1) \oplus D_4(-1) 					& \big(E_7 \oplus A_1^{\oplus 5}, \mathbb{Z}/2\mathbb{Z}\big)\\
 P_{14} =& H \oplus E_8(-1) \oplus A_1(-1)^{\oplus 4} 			& \big(D_{8} \oplus A_1^{\oplus 4}, \mathbb{Z}/2\mathbb{Z}\big), & \big(E_{8} \oplus A_1^{\oplus 4}, \{ \mathbb{I} \}\big)\\
 \hdashline
  P_{15} =& H \oplus E_7(-1) \oplus D_6(-1)  				& \big(D_{10} \oplus A_1^{\oplus 3}, \mathbb{Z}/2\mathbb{Z}\big), &  \big(E_{8} \oplus D_4 \oplus A_1, \{ \mathbb{I} \}\big)\\
 \end{array}
\eeqn
\end{proposition}
\begin{proof}
For the frames in Lemma~\ref{prop:ADE} we compute the multiplicity, following the procedure outlined in \cite{CHM19}, using the Sage class {\tt QuadraticForm}. 
\end{proof}
We make the following:
\begin{remark}
Families of $P'_{14}$-polarized K3 surfaces and $P''_{14}$-polarized K3 surfaces also appear in Reid’s list of ``Famous 95 Families" of Gorenstein K3 surfaces; see \cite{MR1880661}*{Table~3}. They occur as surfaces in weighted projective three-space.
\end{remark}
\subsection{Coarse moduli spaces}
Recall that a {\it Nikulin involution} \cites{MR728142,MR544937} is an involution $\imath_{\mathcal{X}} \colon \mathcal{X} \rightarrow \mathcal{X}$ on a K3 surface $\mathcal{X}$ that satisfies $\imath_{\mathcal{X}}^*(\omega) = \omega $ for any holomorphic 2-form $\omega$ on $\mathcal{X}$.  When quotienting by this involution and blowing up the fixed locus, one obtains a new K3 surface $\mathcal{Y}$ together with a rational double cover map $ \Phi \colon \mathcal{X} \dashrightarrow \mathcal{Y}$. In general, a Nikulin involution does not determine a Hodge isometry between the transcendental lattices $\mathrm{T}_\mathcal{X}(2)$ and $\mathrm{T}_\mathcal{Y}$. 
\par When a K3 surface $\mathcal{X}$ admits a Jacobian elliptic fibration with a 2-torsion section, then $\mathcal{X}$ admits a special Nikulin involution, called \emph{van Geemen-Sarti involution}; see  \cite{MR2274533}.  The corresponding Jacobian elliptic fibration $\pi_\mathcal{X}\colon \mathcal{X} \to \mathbb{P}^1$ is called an \emph{alternate fibration}; see \cite{MR3995925} for the nomenclature. The van Geemen-Sarti involution $\jmath_\mathcal{X}$ is obtained as the fiber-wise translation by 2-torsion in the alternate fibration. Moreover, the construction induces a Jacobian elliptic fibration $\pi_\mathcal{Y}\colon \mathcal{Y} \to \mathbb{P}^1$ on $\mathcal{Y}$ which in turn also admits a 2-torsion section as well. Thus, we obtain the following diagram:
\beq
 \label{diag:isogeny}
\xymatrix{
\mathcal{X} \ar @(dl,ul) _{\jmath_\mathcal{X} } \ar [dr] _{\pi_\mathcal{X} } \ar @/_0.5pc/ @{-->} _{\Phi} [rr]
&
& \mathcal{Y} \ar @(dr,ur) ^{\jmath_\mathcal{Y} } \ar [dl] ^{\pi_\mathcal{Y} } \ar @/_0.5pc/ @{-->} _{\check{\Phi}} [ll] \\
& \mathbb{P}^1 }
\eeq
As mentioned in the introduction, we will refer to the construction of Diagram~(\ref{diag:isogeny}) as \emph{van Geemen-Sarti-Nikulin duality}. We make the following:
\begin{remark}
Consider the K3 surfaces polarized by the lattices in Equation~(\ref{eqn:lattices_intro}). Only for the polarizing lattices $P_{14}$ and $P'_{14}$ is there an alternate fibration, allowing for the construction of a van Geemen-Sarti-Nikulin duality.
\end{remark}
\subsubsection{The case of \texorpdfstring{$P_{14}$}{P}-polarized K3 surfaces}
\label{ssec:moduli_space}
We consider the Jacobian elliptic K3 surface $\mathcal{X}$ with one singular fiber of type $I_{2n}^*$ with $n \ge 2$ and a 2-torsion section. Here, we are using the Kodaira classification for singular fibers for Jacobian elliptic fibrations \cite{MR0184257}. A Weierstrass model for such a fibration $\pi_\mathcal{X}\colon  \mathcal{X} \rightarrow \mathbb{P}^1$ -- with fibers in $\mathbb{P}^2 = \mathbb{P}(x, y, z)$ varying over $\mathbb{P}^1=\mathbb{P}(u, v)$ -- is given by
\beq
\label{eqn:alt0}
  \mathcal{X}\colon \quad y^2 z  = x^3 + v  A(u, v) \, x^2 z + v^4 B(u, v) \, x z^2  \,,
\eeq
where $A$ and $B$ are polynomials of degree three and four, respectively. If the Weierstrass model is minimal, the polynomial $A(t,1)$ has a non-vanishing cubic coefficient. The fibration admits the section $\sigma\colon [x:y:z] = [0:1:0]$ and the 2-torsion section $[x:y:z] = [0:0:1]$, and has the discriminant
\beq
\label{eqn:Delta_X}
\Delta_\mathcal{X} =  v^{10} B(u, v)^2 \, \Big(A(u, v)^2-4 \,v^2 B(u, v)\Big) \,.
\eeq
On the elliptic fibration~(\ref{eqn:alt0}) the translation by 2-torsion acts  fiberwise as
\beq
\label{eqn:VGS_involution}
 \jmath_\mathcal{X} \colon \quad \Big[ x : y : z \Big] \mapsto \Big[ v^4 B(u, v) \, x z \  : - v^4 B(u, v) \, y z \ : \ x^2 \Big] 
\eeq 
for $[x:y:z] \not= [0:1:0], [0:0:1]$, and by swapping $[0:1:0] \leftrightarrow [0:0:1]$. 
\par The minimal resolution of the quotient surface $\mathcal{Y}= \widehat{\mathcal{X}/\langle \jmath_\mathcal{X}  \rangle}$ admits the induced elliptic fibration $\pi_\mathcal{Y}\colon  \mathcal{Y} \rightarrow \mathbb{P}^1$ given by
\beq
\label{eqn:alt_SI}
  \mathcal{Y}\colon \quad y^2 z  = x^3 -2 vA(u, v) \, x^2 z + v^2 \Big(A(u, v)^2 - 4 v^2 B(u, v)\Big) x z^2  \,,
\eeq
with the discriminant
\beq
\Delta_\mathcal{Y} =  16 \, v^{6} B(u, v)\, \Big(A(u, v)^2-4 \,v^2 B(u, v)\Big)^2 \,.
\eeq
We make the following:
\begin{remark}
\label{rem:gauge_fixing0}
By rescaling $(x, y, z) \to (\Lambda^2 x, \Lambda^3 y, z)$ and changing $u \mapsto a u+b v$, we can assume that $A(t,1)$ and the sextic $S(t) = A(t,1)^2 - 4 B(t, 1)$ in Equation~(\ref{eqn:alt_SI}) are monic polynomials of degree three and six, respectively, whose sub-leading coefficient proportional to $t^2$ (resp.~\!$t^5$) vanishes. 
\end{remark}
In the following, we will assume that the polynomials $A$ and $B$ are as follows:
\beq
\label{eqn:AB}
 A(u, v) = u^3 + a_1 uv^2 + a_0 v^3\,, \qquad B(u, v) = b_4 u^4 + b_3 u^3v + b_2 u^2v^2 + b_1 uv^3 + b_0 v^4\,.
\eeq
We have the following:
\begin{lemma}
\label{lem:alternate_fibrations_extensions}
General K3 surfaces $\mathcal{X}$ and $\mathcal{Y}$ admit Jacobian elliptic fibrations with singular fibers $I_4^* + 4 I_2 + 6 I_1$ and $I_2^* + 4 I_1 + 6 I_2$, respectively, and Mordell-Weil group $\mathbb{Z}/2\mathbb{Z}$. The singular fibers are $I_6^* + 3 I_2 + 6 I_1$ and $I_3^* + 3 I_1 + 6 I_2$ if and only if $b_4=0$, and singular fibers $I_8^* + 2 I_2 + 6 I_1$ and $I_4^* + 2 I_1 + 6 I_2$ if and only if $b_3=b_4=0$, and the remaining parameters are general. 
\end{lemma}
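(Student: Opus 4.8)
The plan is to compute the Kodaira fiber types directly from the Weierstrass equations~(\ref{eqn:alt0}) and~(\ref{eqn:alt_SI}) with $A$, $B$ as in~(\ref{eqn:AB}), using Tate's algorithm at each point of the discriminant locus, and to track the Mordell-Weil group via the two-torsion section together with the Shioda-Tate formula. First I would treat the generic case. For $\pi_\mathcal{X}$, the discriminant is $\Delta_\mathcal{X} = v^{10} B(u,v)^2 \big(A(u,v)^2 - 4 v^2 B(u,v)\big)$ from~(\ref{eqn:Delta_X}). At $v=0$ one has $x^2$ dividing the right-hand side of~(\ref{eqn:alt0}) to high order; running Tate's algorithm (the coefficient $v A(u,v)x^2 z$ contributes $v$-order $1$, and $v^4 B(u,v) x z^2$ contributes order $4$, with $A(u,1)$ having nonvanishing leading cubic coefficient so the relevant residual quadratic has distinct roots) yields a fiber of type $I_4^*$, i.e. $\widetilde{D}_8$, consistent with the root lattice $D_8(-1)$ appearing inside $\operatorname{NS}(\mathcal{X})$ via Proposition~\ref{cor:EFS3}. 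At the four simple zeros of $B(u,v)$ (generically $b_4\neq 0$ so $B$ has four distinct roots, none at $v=0$ after the normalization of Remark~\ref{rem:gauge_fixing0}), the equation factors as $x(x^2 + \dots)$ with the quadratic term nonvanishing, giving $I_2$ fibers — each carrying the two-torsion section on the non-identity component. The sextic $A(u,v)^2 - 4 v^2 B(u,v)$ has $10 - 4 = 6$ remaining simple zeros, producing six $I_1$ fibers. The Euler-number check $8\cdot\!\big(I_4^*\big) + 4\cdot 2 + 6\cdot 1 = 10 + 8 + 6 = 24$ confirms we have found all singular fibers; the analogous computation for $\pi_\mathcal{Y}$ from~(\ref{eqn:alt_SI}), where $v^2$ divides to order producing $I_2^*$ at $v=0$, four $I_1$'s from the simple zeros of $B$, and six $I_2$'s from the double-counted sextic, again sums to $4 + 4 + 12 = 24$. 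For the Mordell-Weil group: the root lattices $D_8\oplus D_4$ (resp.\ $D_6\oplus A_1^{\oplus 4}$ after accounting for the $I_2$'s) have rank $12$, so the trivial lattice has rank $14 = \rho(\mathcal{X})$, forcing $\operatorname{rank}\operatorname{MW} = 0$ by Shioda-Tate; the two-torsion section is manifestly present, and the discriminant-group computation of Proposition~\ref{cor:EFS3} pins down $\operatorname{MW} \simeq \mathbb{Z}/2\mathbb{Z}$ exactly.

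Next I would handle the two degenerations. When $b_4 = 0$ (and the rest generic), $B(u,v) = v(b_3 u^3 + \dots)$ acquires a simple zero at $v=0$; this bumps the $v$-order contributions in~(\ref{eqn:alt0}) so that Tate's algorithm at $v=0$ now returns $I_6^*$ ($\widetilde{D}_{10}$) instead of $I_4^*$, while $B$ has only three remaining simple zeros away from $v=0$, giving $3\, I_2$, and the sextic loses the corresponding count to leave $6\, I_1$. The check $10+2+3\cdot 2 + 6 = 10 + 2 + 6 + 6 = 24$ holds — wait, more carefully: $8 + 2 + 1$ local Euler numbers give $(6+6) + 3\cdot 2 + 6 = 12 + 6 + 6 = 24$. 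Symmetrically $\pi_\mathcal{Y}$ gets $I_3^* + 3 I_1 + 6 I_2$ with $(3+6) + 3 + 12 = 24$. When $b_3 = b_4 = 0$, $B = v^2(b_2 u^2 + \dots)$ has a double zero at $v=0$, pushing the fiber there to $I_8^*$ ($\widetilde{D}_{12}$) and leaving $B$ with two simple zeros off $v=0$, hence $2\, I_2 + 6\, I_1$, with Euler sum $14 + 4 + 6 = 24$; likewise $\pi_\mathcal{Y}$ becomes $I_4^* + 2 I_1 + 6 I_2$. In each case one must verify that these are \emph{iff} statements: the fiber over $v=0$ is $I_{2n}^*$ with $n = 2 + (\text{order of vanishing of } B \text{ at } v=0)$, so $I_6^*$ forces $b_4 = 0$ and $I_8^*$ forces $b_3 = b_4 = 0$; the "remaining parameters generic" hypothesis ensures the sextic $A^2 - 4v^2 B$ keeps simple roots and $B$'s other roots stay simple and distinct from $v=0$, so no further fiber collisions occur.

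The main obstacle I anticipate is the careful execution of Tate's algorithm at $v=0$ to confirm that the fiber is precisely of type $I_{2n}^*$ (additive, $\widetilde{D}_{2n+4}$) and not $I_{2n+1}^*$ or a $\widetilde{E}$-type fiber — this requires checking that the relevant "$a_6$-type" coefficient (here governed by $v^4 B(u,v) x z^2$, i.e.\ the absence of an $xz^2$-independent cubic term) makes the residual polynomial in the algorithm's final step have distinct roots, which uses the minimality statement after~(\ref{eqn:alt0}) that $A(t,1)$ has nonvanishing cubic coefficient together with the normalization in Remark~\ref{rem:gauge_fixing0}. A secondary subtlety is bookkeeping the two-torsion section's component across the $I_2$ versus $I_1$ fibers: the section $[0:0:1]$ meets the non-identity component exactly at the $I_2$ (and $I_{2n}^*$) fibers where $x=0$ is a node of the special fiber, and this is what realizes the $\mathbb{Z}/2\mathbb{Z}$ inside the discriminant group rather than making the fibration non-Jacobian or altering the root lattice; I would cite Proposition~\ref{cor:EFS3} (and its analogue Proposition~\ref{cor:EFS2} for the degenerate extensions, where the relevant larger polarizations appear) to close the identification of $\operatorname{MW}$ once the fiber types and Shioda-Tate rank count are in hand.
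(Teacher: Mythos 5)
Your proposal is correct and takes essentially the same route as the paper: the paper's proof simply states that the fiber types are ``checked directly using the Weierstrass models'' in Equations~(\ref{eqn:alt0}) and~(\ref{eqn:alt_SI}) and then pins down $\operatorname{MW}\simeq\mathbb{Z}/2\mathbb{Z}$ from the Picard rank together with Shimada's list, which is exactly your Tate-algorithm-plus-Shioda--Tate argument spelled out in detail, including the key observation that the fiber over $v=0$ is $I_{2n}^*$ with $n=2+\operatorname{ord}_{v=0}B$, which gives the ``if and only if'' statements. A few transcription slips should be fixed: the relevant lattice-theoretic input is Proposition~\ref{cor:EFS2} (the $\mathrm{P}$-polarized case), not Proposition~\ref{cor:EFS3}; the root lattices are $D_8\oplus A_1^{\oplus 4}$ for $\pi_\mathcal{X}$ and $D_6\oplus A_1^{\oplus 6}$ for $\pi_\mathcal{Y}$ (both of rank $12$, so the Shioda--Tate count is unaffected); and the Euler-number sum for $\pi_\mathcal{Y}$ should read $8+4+12=24$ since $e(I_2^*)=8$.
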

\begin{proof}
The statements are checked directly using the Weierstrass models in Equation~(\ref{eqn:alt0}) and~(\ref{eqn:alt_SI}). As for the K3 surface $\mathcal{Y}$, by construction the Mordell-Weil group of $\mathcal{Y}$ must contain the subgroup $\mathbb{Z}/2\mathbb{Z}$. It cannot have any additional sections of infinite order. Comparing with the list in \cite{MR1813537} shows that the Mordell-Weil group is indeed $\mathbb{Z}/2\mathbb{Z}$.
\end{proof}
Similar to the lattices $P_{14} \subset P_{15} \subset P_{16}$,  let us also consider the lattices given as
\beq
\label{eqn:Rlattices}
 R_{14} = H \oplus D_4(-1)^{\oplus 3}  \  \subset \ R_{15} = H \oplus D_5(-1) \oplus D_4(-1)^{\oplus 2}  \ \subset \   R_{16} =  H \oplus D_6(-1) \oplus D_4(-1)^{\oplus 2}.
\eeq
We have the following:
\begin{proposition}
\label{prop:fibrations_SI}
General K3 surfaces $\mathcal{X}$ and $\mathcal{Y}$ have a N\'eron-Severi lattice isomorphic to $P_{14}$ and $R_{14}$, respectively. The N\'eron-Severi lattices of $\mathcal{X}$ and $\mathcal{Y}$ are $P_{15}$ and $R_{15}$, respectively, if $b_4=0$, and $P_{16}$ and $R_{16}$, respectively, if $b_3=b_4=0$.
\end{proposition}
\begin{proof}
The statement follows directly from Lemma~\ref{lem:alternate_fibrations_extensions} and standard lattice-theoretic arguments.
\end{proof}
We introduce parameters
\beq
\label{eqn:invariants_alternate_fibration}
 \Big[  J_4 :  J'_4 : J_6:  J'_6 :  J_8:  J_{10}:  J_{12}   \Big] = \left[ - \frac{a_1}{3} :  b_4 : -\frac{a_2}{2} : -b_3 : b_2 : - b_1 : b_0 \right] \,,
\eeq 
and obtain the following:
\begin{theorem}
\label{thm:moduli_space_alt1}
$\mathscr{M}_P$ in Equation~(\ref{eqn:moduli_space_P}) is a coarse moduli space of $P_{14}$-polarized K3 surfaces. Here, a K3 surface $\mathcal{X} \in \mathscr{M}_P$ is the minimal resolution of Equation~(\ref{eqn:alt0}). Moreover, the coarse moduli space of $P_{15}$-polarized K3 surfaces is the subspace $J'_4=0$; the  coarse moduli space of $P_{16}$-polarized K3 surfaces is the subspace $J'_4=J'_6=0$. 
\end{theorem}
\begin{proof}
Because of Proposition~\ref{cor:EFS2}, every $P_{14}$-polarized K3 surface, up to isomorphism, admits a unique alternate fibration that can be brought into the form of Equation~(\ref{eqn:alt0}). Thus, we obtain the moduli space as the following open variety:
 \beq
\left\{ 
\begin{array}{c} 
 \Big\lbrack a_1 : a_2 : b_4 : b_3 : b_2 : b_1 : b_0 \Big \rbrack \\
 \in \mathbb{WP}_{(4,6,4,6,8,10,12)}
 \end{array}
 \left\vert 
\begin{array}{l}
{ \scriptstyle(b_4, b_5, b_6, b_7, b_8) \neq 0 , }\\
{ \scriptstyle \not \exists \, r, s \, \in \, \mathbb{C}: \ (a_1, a_2) = ( -3 r^2, -2 r^3) \; \text{and}} \\
{ \scriptstyle (b_4, b_3, b_2, b_1, b_0) = (s, 4 r s, 6r^2 s, 4r^3 s, r^4 s)}
\end{array}  \right. \right\} \,.
\eeq
Moreover, one can tell precisely when two members of the family in Equation~(\ref{eqn:alt0}) are isomorphic. The normalization of the coefficients in Equation~(\ref{eqn:AB}) fixes the coordinates $[u:v] \in \mathbb{P}^1$ completely; see Remark~\ref{rem:gauge_fixing0}.  Thus,  two members  are isomorphic if and only if their coefficient sets are related by the transformation 
\beq
 \Big( a_1, b_4, a_0, b_3 ,b_2, b_1, b_0 \Big) \  \mapsto \ \Big( \Lambda^4 a_1,  \Lambda^4 b_4, \Lambda^6 a_0, \Lambda^6 b_3, \Lambda^8 b_2, \Lambda^{10} b_1, \Lambda^{12} b_0 \Big) \,,
\eeq
with $\Lambda \in \mathbb{C}^\times$. The reason is that such a rescaling, when combined with the transformation $(u, v, x, y, z) \mapsto (\Lambda^2 u, v, \Lambda^{6} x, \Lambda^{9} y, z)$, gives rise to a holomorphic isomorphism of Equation~(\ref{eqn:alt0}). Conversely, an equivalence class of invariants determines a well defined K3 surface as long as the Weierstrass model is irreducible and minimal.
\par Bringing Equation~(\ref{eqn:alt0}) into the standard Weierstrass normal form, we obtain
\beq
\label{eqn:WE_AB}
 y^2 z = x^3 - 3 v^2 \big(A(u, v)^2 - 3  v^2 B(u, v) \big) x z^2 + v^3 A(u, v)  \big(2 A(u, v)^2 - 9 v^2 B(u, v) \big) z^3\,.
\eeq 
For $B \equiv 0$ the Weierstrass model becomes $y^2 z =(x+ 2 v Az)(x - v A z)^2$. Thus, for the Weierstrass model in Equation~(\ref{eqn:alt0}) to determine a K3 surface  $B$ must not vanish identically. If $B \neq 0$ and if there is no polynomial $c \in \mathbb{C}[u, v]$ so that $c^2$ divides $a$ and $c^4$ divides $b$, then the minimal resolution of Equation~(\ref{eqn:alt0}) is a K3 surface.  The latter occurs if and only if there are $r, b_4 \in \mathbb{C}$ such that $(a_1, a_2) = ( -3 r^2, -2 r^3)$ and
and $(b_5, b_6, b_7, b_8) = (4 r b_4, 6r^2b_4, 4r^3 b_4, r^4b_4)$. Then,  $(u+r v)^2$ divides $A$ and $(u+r v)^4$ divides $B$. Because of Proposition~\ref{prop:fibrations_SI}, Equation~(\ref{eqn:alt0}) becomes a Jacobian elliptic fibration on a general $P_{15}$-polarized K3 surface $\mathcal{Y}$ if $b_4=0$.  The last statement follows from Proposition~\ref{prop:fibrations_SI} and by comparison with results already proved in \cite{MR4015343}.
\end{proof}
\begin{remark}
\label{rem:P}
The proof uses the fact that every $P_{14}$-polarized K3 surface, up to isomorphism, admits a unique alternate fibration~(\ref{eqn:alt0}). Thus, there is a canonical lattice embedding  $H \oplus N \hookrightarrow P_{14}$, and  every $P_{14}$-polarized K3 surface carries a unique underlying $H \oplus N$-polarization. 
\end{remark}
\begin{remark}
For $J'_4 = J'_6=0$ the remaining are the generators of $A(\mathpzc{D}_4, \Gamma_4)$ in Equation~(\ref{eqn:generators}); see Proposition~\ref{prop:coincide}.
\end{remark}
\subsubsection{The case of \texorpdfstring{$P'_{14}$}{P'}-polarized K3 surfaces}
\label{sssec:PPmoduli}
Our approach from Section~\ref{ssec:moduli_space} can also be used to construct a moduli space for K3 surfaces of Picard number 14 for which the types of singular fibers of the alternate fibration do not change under the action of a van Geemen-Sarti involution. A Weierstrass model for such a Jacobian elliptic fibration $\pi_{\mathcal{X}'}\colon  \mathcal{X}' \rightarrow \mathbb{P}^1$ is given by
\beq
\label{eqn:alt_E7}
  y^2 z  = x^3 + v^2  C(u, v) \, x^2 z + v^3 D(u, v) \, x z^2  \,,
\eeq
where $C$ and $D$ are polynomials of degree two and five, respectively. If the Weierstrass model is minimal, the polynomial $D(t,1)$ has a non-vanishing quintic coefficient. The fibration obviously admits the section $\sigma\colon [x:y:z] = [0:1:0]$ and the 2-torsion section $[x:y:z] = [0:0:1]$, and it has the discriminant
\beq
\label{eqn:Delta_alt_E7}
\Delta_{\mathcal{X}'} = v^9 D(u, v)^2 \Big(v \, C(u, v)^2  - 4 \, D(u, v)  \Big) \,.
\eeq
As explained before, on the Jacobian elliptic fibration~(\ref{eqn:alt_E7}) the fiberwise translation by the 2-torsion section acts as a van Geemen-Sarti involution which we will denote by $\jmath_{\mathcal{X}'}$. The minimal resolution of the quotient surface $\mathcal{X}'/\langle \jmath_{\mathcal{X}'} \rangle$ is a K3 surface $\mathcal{Y}'$ admitting an induced Jacobian elliptic fibration $\pi_{\mathcal{Y}'}\colon  \mathcal{Y}' \rightarrow \mathbb{P}^1$. After rescaling, the induced fibration becomes
\beq
\label{eqn:alt_SI_E7}
  \mathcal{Y}'\colon \quad y^2 z  = x^3 -2 v^2 C(u, v) \, x^2 z + v^3  \Big(v \, C(u, v)^2  - 4 \, D(u, v) \Big) \, x z^2  \,,
\eeq
and it has the discriminant
\beq
\Delta_{\mathcal{Y}'} =   16  v^9 D(u, v) \Big(v \, C(u, v)^2  - 4 \, D(u, v)  \Big)^2 \,.
\eeq
Thus, the surfaces $\mathcal{X}'$ and $\mathcal{Y}'$ are both Jacobian elliptic K3 surfaces with a Mordell-Weil group $\mathbb{Z}/2\mathbb{Z}$ and singular fibers $III^* + 5 I_2 + 5 I_1$. We make the following:
\begin{remark}
\label{rem:gauge_fixing1}
By rescaling $(x, y, z) \to (\Lambda^2 x, \Lambda^3 y, z)$ and changing $u \mapsto a u+b v$, we can assume that $D(t,1)$ is a monic polynomial of degree five, whose sub-leading coefficient proportional to $t^4$ vanishes. 
\end{remark}
In the following, we will assume that the polynomials $C$ and $D$ are as follows:
\beq
\label{eqn:CD}
 C(u, v) =  c_2 u^2 + c_1 uv + c_0 v^2\,, \quad D(u, v) = u^5 + d_3 u^3v^2 + d_2 u^2v^3 + d_1 uv^4 + d_0 v^5\,.
\eeq
We have the following:
\begin{corollary}
\label{prop:fibrations_SI_CD}
General K3 surfaces $\mathcal{X}'$ and $\mathcal{Y}'$ have the N\'eron-Severi lattice isomorphic to $P'_{14}$.
\end{corollary}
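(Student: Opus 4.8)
The plan is to proceed exactly as in the $\mathrm{P}$-polarized case (Section~\ref{ssec:moduli_space}), using the explicit Weierstrass model in Equation~(\ref{eqn:alt_E7}) and its translate in Equation~(\ref{eqn:alt_SI_E7}). First I would check directly from these Weierstrass equations and their discriminants~(\ref{eqn:Delta_alt_E7}),~(\ref{eqn:alt_SI_E7}) that, for generic coefficients $(c_0,c_1,c_2,d_0,d_1,d_2,d_3)$, both $\mathcal{X}'$ and $\mathcal{Y}'$ have a singular fiber of Kodaira type $III^*$ over $v=0$ (from the valuations of $A_2 = v^2 C$, $A_4 = v^3 D$, $A_6 = 0$ along $v$, using Tate's algorithm), together with five fibers of type $I_2$ coming from the double zeros of $D$ (respectively of $v C^2 - 4D$) and five fibers of type $I_1$ from the simple zeros of $v C^2 - 4D$ (respectively of $D$). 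This gives a root lattice $E_7(-1)\oplus A_1(-1)^{\oplus 5}$ of rank $12$, hence $\operatorname{rank}\operatorname{NS} \ge 14$; combined with the two-torsion section this is precisely the configuration $(\mathrm{K}^{\text{root}}_{\mathrm{P}'}, \operatorname{MW}) = (E_7(-1)\oplus A_1(-1)^{\oplus 5}, \mathbb{Z}/2\mathbb{Z})$ appearing in Proposition~\ref{cor:EFS3}.

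Next I would invoke the lattice-theoretic input already established: by Proposition~\ref{cor:EFS3}, the only Jacobian fibration on a generic $\mathrm{P}'$-polarized K3 with a two-torsion section has exactly this root lattice and Mordell--Weil group, and conversely such a fibration forces the N\'eron--Severi lattice to contain $H \oplus E_7(-1)\oplus A_1(-1)^{\oplus 5}$, whose saturation (the overlattice corresponding to the $\mathbb{Z}/2\mathbb{Z}$ section) is isometric to $\mathrm{P}' = H\oplus D_8(-1)\oplus D_4(-1)$. Thus the generic member of the family~(\ref{eqn:alt_E7}) has $\operatorname{NS}(\mathcal{X}') \cong \mathrm{P}'$, and I would argue as in Lemma~\ref{lem:alternate_fibrations_extensions} that there is no room for extra infinite-order sections since the Picard rank is already $14$ and the automorphism group is finite (Lemma~\ref{lem:lattice}). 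For $\mathcal{Y}'$ one runs the same analysis on Equation~(\ref{eqn:alt_SI_E7}): the singular fiber types are identical, so $\operatorname{NS}(\mathcal{Y}') \cong \mathrm{P}'$ as well, and one can cite \cite{MR1813537} to confirm that $\operatorname{MW}(\pi_{\mathcal{Y}'},\sigma) = \mathbb{Z}/2\mathbb{Z}$ with no infinite-order part, exactly as in the $\mathrm{P}$ case. (That $\operatorname{MW}$ contains $\mathbb{Z}/2\mathbb{Z}$ is automatic: $\mathcal{Y}'$ inherits the two-torsion section by construction.)

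The one point requiring a small genericity check is that the coefficients in~(\ref{eqn:CD}) are chosen so that the listed fibers are actually realized, i.e.\ that $D(t,1)$ has exactly five simple roots, that $v C^2 - 4D$ has exactly five simple roots, that these root sets are disjoint, and that no further coincidences force fiber collisions or jumps in Kodaira type — the normalization of Remark~\ref{rem:gauge_fixing1} (making $D$ monic with vanishing $t^4$-coefficient) fixes the remaining reparametrization freedom, so the generic $7$-tuple indeed gives the stated configuration. I expect the main (mild) obstacle to be the Tate-algorithm verification that the fiber at $v=0$ is genuinely $III^*$ rather than something more degenerate or a non-minimal point: one must confirm that after the standard sequence of blow-ups the Weierstrass model is minimal there, which uses that $D(t,1)$ has non-vanishing leading coefficient (stated after~(\ref{eqn:alt_E7})) so that $A_4 = v^3 D$ has $v$-valuation exactly $3$ at $u = \infty$ directions — this is the analogue of the monic-$A$ condition in the $\mathrm{P}$ case. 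Everything else is a routine transcription of the arguments of Proposition~\ref{prop:fibrations_SI} and Theorem~\ref{thm:moduli_space_alt1}, and the statement of Corollary~\ref{prop:fibrations_SI_CD} follows.
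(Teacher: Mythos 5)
Your proposal is correct and follows essentially the same route as the paper: the paper's proof is a one-line appeal to the lattice-theoretic facts in the proof of Proposition~\ref{cor:EFS3} (the Shimada-list pairing of $(E_7(-1)\oplus A_1(-1)^{\oplus 5},\mathbb{Z}/2\mathbb{Z})$ with $\mathrm{P}'$, together with the cited converse that a K3 surface carrying such an alternate fibration is $\mathrm{P}'$-polarized), and you simply fill in the explicit Kodaira-type verification from the Weierstrass models~(\ref{eqn:alt_E7}) and~(\ref{eqn:alt_SI_E7}) before invoking the same proposition. The only step deserving emphasis is that you need the \emph{converse} direction of Proposition~\ref{cor:EFS3} (fibration data $\Rightarrow$ $\mathrm{P}'$-polarization), which is contained in its proof via the reference to the weighted-projective realizations, and which your argument does implicitly use.
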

\begin{proof}
The proof follows directly from the basic lattice theoretical facts in the proof of Proposition~\ref{cor:EFS3}.
\end{proof}
\par We introduce parameters $\{ \mathcal{J}_{2k} \}$, given by
\beq
\label{modulispace_lattice2}
  \Big( \mathcal{J}_2, \mathcal{J}_6, \mathcal{J}_8, \mathcal{J}_{10}, \mathcal{J}_{12}, \mathcal{J}_{16}, \mathcal{J}_{20}  \Big) =  \Big(c_2, c_1, d_3, c_0,  d_2, d_1, d_0 \Big) \,,
\eeq
whose subscripts will reflect their weights under the scaling. We have the following:

\begin{theorem}
\label{thm:moduli_space_seldual}
$\mathscr{M}_{P'}$ in Equation~(\ref{eqn:moduli_space_PP}) is a coarse moduli space of $P'_{14}$-polarized K3 surfaces. Here, a K3 surface $\mathcal{X}' \in \mathscr{M}_{P'}$ is the minimal resolution of Equation~(\ref{eqn:alt_SI_E7}).
\end{theorem}
\begin{proof}
Because of Proposition~\ref{cor:EFS3}, every $P'_{14}$-polarized K3 surface, up to isomorphism, admits a unique alternate fibration that can be brought into the form of Equation~(\ref{eqn:alt_E7}). One can then tell precisely when two members of the family in Equation~(\ref{eqn:alt_E7}) are isomorphic. The normalization of the coefficients in Equation~(\ref{eqn:CD}) fixes the coordinates $[u:v] \in \mathbb{P}^1$ completely; see Remark~\ref{rem:gauge_fixing1}.  Thus, two members  are isomorphic if and only if their coefficient sets are related by the transformation 
\beq
 \Big(c_2, c_1, d_3, c_0,  d_2, d_1, d_0 \Big) \  \mapsto \  \Big(  \Lambda^2 c_2, \Lambda^6 c_1, \Lambda^8 d_3,  \Lambda^{10}c_0, \Lambda^{12} d_2, \Lambda^{16} d_1, \Lambda^{20} d_0 \Big) ,
\eeq
with $\Lambda \in \mathbb{C}^\times$. The reason is that such a rescaling, when combined with the transformation  $(u, v, x, y, z) \mapsto (\Lambda^4 u, v, \Lambda^{10} x, \Lambda^{15} y, z)$, gives rise to a holomorphic isomorphism of Equation~(\ref{eqn:alt_E7}).  Conversely, an equivalence class of invariants in Equation~(\ref{modulispace_lattice2}) determines a well defined K3 surface as long as the Weierstrass model is irreducible and minimal.
\par Bringing Equation~(\ref{eqn:alt_E7}) into a standard Weierstrass normal form, we obtain
\beq
\label{eqn:WE_CD}
 y^2 z = x^3 - \frac{1}{3} v^3 \big( v C(u, v)^2 - 3  D(u, v) \big) \, x z^2 + \frac{1}{27} v^5 C(u, v)  \big(2 v C(u, v)^2 - 9 D(u, v) \big) z^3\,.
\eeq
Because the polynomial $D(t,1)$ is monic, we cannot have $D \equiv 0$ or $v C(u, v)^2 - 4 D(u, v) \equiv 0$. Thus, in Equation~(\ref{eqn:WE_CD}) the right hand side cannot factor into a product of two terms where one is a non-trivial square. However,  the Weierstrass model becomes non-minimal if and only if  there are $r, b_4 \in \mathbb{C}$ such that
\beq
 \Big(c_2, c_1, d_3, c_0,  d_2, d_1, d_0 \Big) \  \mapsto \  \Big(  s^2,  2 r s^2, 10 r^2, r^2s^2,  -20 r^3,  -15r^4, -4 r^5\Big) .
\eeq
Then for the polynomial $c = u + r v \in \mathbb{C}[u, v]$ the polynomials $c^2$ divides $C$ and $c^4$ divides $D$. 
\end{proof}
\begin{remark}
\label{rem:PP}
The proof uses the fact that every $P'_{14}$-polarized K3 surface, up to isomorphism, admits a unique alternate fibration~(\ref{eqn:alt_E7}). Thus, there is a canonical lattice embedding  $H \oplus N \hookrightarrow P'_{14}$, and every $P'_{14}$-polarized K3 surface carries a unique underlying $H \oplus N$-polarization. 
\end{remark}
In contrast to the $P_{14}$-polarized case, the sub-variety $\mathscr{M}_{P'}$ is left invariant by action of the van Geemen-Sarti-Nikulin duality. The dual of a given $P'_{14}$-polarized K3 surface is again a $P'_{14}$-polarized surface; see Corollary~\ref{prop:fibrations_SI_CD}. This involution, denoted by
\beqn
 \imath_{\mathrm{vgsn}}' \colon \quad  \mathscr{M}_{P'}  \ \rightarrow \  \mathscr{M}_{P'} \,, \qquad \text{with} \quad  \imath_{\mathrm{vgsn}} ' \circ \imath_{\mathrm{vgsn}} ' = \mathrm{id} \,,
\eeqn 
can be constructed explicitly. We have the following:
\begin{proposition}
\label{prop:involutionCD}
The van Geemen-Sarti-Nikulin duality acts on the moduli space $\mathscr{M}_{P'}$ in Equation~(\ref{eqn:moduli_space_PP}) as the involution $\imath_{\mathrm{vgsn}}'\colon \mathscr{M}_{P'}  \to \mathscr{M}_{P'}$ given by Equation~(\ref{eqn:involutionCD22}).
\end{proposition}
\begin{proof}
After rescaling Equation~(\ref{eqn:alt_SI_E7}), the induced fibration on $\mathcal{Y}'$  can be written as
\beq
\label{eqn:alt_SI_E7_2a}
  \mathcal{Y}'\colon \quad \tilde{y}^2 \tilde{z}  = \tilde{x}^3 - v^2 C(u, v) \, \tilde{x}^2 \tilde{z} + v^3  \Big( - D(u, v) +  \frac{v}{4} \, C(u, v)^2 \Big) \, \tilde{x} \tilde{z}^2  \,.
\eeq
If we also set $[u : v]=[-\tilde{u}+c_2^2 \tilde{v}/20 : \tilde{v}]$, then Equation~(\ref{eqn:alt_SI_E7_2a}) becomes
\beq
\label{eqn:alt_SI_E7_b}
  \mathcal{Y}'\colon \quad  \tilde{y}^2 \tilde{z}  = \tilde{x}^3+ \tilde{v}^2 \tilde{C}(\tilde{u}, \tilde{v}) \, \tilde{x}^2 \tilde{z} + \tilde{v}^3  \tilde{D}(\tilde{u}, \tilde{v}) \, \tilde{x} \tilde{z}^2  \,,
\eeq
where $\tilde{C}(\tilde{u}, \tilde{v}) = \tilde{c}_2 \tilde{u}^2 + \tilde{c}_1 \tilde{u}\tilde{v} + \tilde{c}_0 \tilde{v}^2$ and $\tilde{D}(\tilde{u}, \tilde{v}) = \tilde{u}^5 + \tilde{d}_3 \tilde{u}^3 \tilde{v}^2 + \tilde{d}_2 \tilde{u}^2 \tilde{v}^3 + \tilde{d}_1 \tilde{u}\tilde{v}^4 + \tilde{d}_0 \tilde{v}^5$ are related to the polynomials in Equation~(\ref{eqn:CD}) by the equations
\beq
\label{eqn:CD2}
  \tilde{C}\Big(\tilde{u}, \tilde{v}\Big)  = -C\left(-\tilde{u}+\frac{c_2^2}{20} \tilde{v} , \tilde{v} \right)\,,\quad
  \tilde{D}\Big(\tilde{u}, \tilde{v}\Big)  = -D\left(-\tilde{u}+\frac{c_2^2}{20}  \tilde{v} , \tilde{v} \right) +  \frac{\tilde{v}}{4}  C\left(-\tilde{u}+\frac{c_2^2}{20} \tilde{v} , \tilde{v} \right)\,.
\eeq
The van Geemen-Sarti-Nikulin duality maps $\mathcal{X}'$ to  $\mathcal{Y}'$ and vice versa. Hence, the duality acts by interchanging $(C,D)$ and $(\tilde{C}, \tilde{D})$ or, equivalently, by the action of an involution $\imath_{\mathrm{vgsn}} '$ on the defining parameter sets of the K3 surfaces $\mathcal{X}'$ and  $\mathcal{Y}'$, i.e., 
\beq 
\label{eqn:involutionCD}
 \imath_{\mathrm{vgsn}}'\colon \quad \Big(c_2, c_1, c_0, d_3, d_2, d_1 , d_0\Big) \ \mapsto \  \Big(\tilde{c}_2, \tilde{c}_1, \tilde{c}_0, \tilde{d}_3, \tilde{d}_2, \tilde{d}_1, \tilde{d}_0\Big) \,,
\eeq
with
\beq
\label{eqn:involutionCD2}
 \left( \begin{array}{c} \tilde{c}_2 \\[2pt] \tilde{c}_1 \\[2pt] \tilde{c}_0 \\[2pt] \tilde{d}_3 \\[2pt] \tilde{d}_2 \\[2pt] \tilde{d}_1 \\[2pt] \tilde{d}_0  \\[6pt] \end{array}\right) 
 =  \left( {\scriptscriptstyle\begin{array}{l} -c_2 \\[2pt]  \phantom{-} c_1 + \frac{1}{10} c_2^3 \\[2pt]  -c_0 - \frac{1}{20} c_1 c_2^2 - \frac{1}{400} c_2^5 \\[2pt] 
 \phantom{-} d_3 - \frac{1}{2} c_1 c_2 - \frac{1}{40} c_2^4 \\[2pt] 
 - d_2 - \frac{3}{20} c_2^2 d_3 + \frac{1}{4} c_1^2 + \frac{1}{2} c_0 c_2 + \frac{3}{40} c_1 c_2^3 + \frac{1}{400} c_2^6\\[2pt] 
 \phantom{-}d_1 + \frac{1}{10} c_2^2 d_2 +\frac{3}{400} c_2^4 d_3 - \frac{1}{2} c_0 c_1 - \frac{1}{20} c_0 c_2^3 - \frac{1}{40} c_1^2 c_2^2 - \frac{3}{800} c_2^5 c_1 - \frac{3}{3200} c_2^8\\[2pt]
 - d_0  -\frac{1}{20} c_2^2 d_1 - \frac{1}{400} c_2^4 d_2 - \frac{1}{8000} c_2^6 d_3  \\[2pt]
  \multicolumn{1}{r}{+ \frac{1}{4} c_0^2+ \frac{1}{40} c_0 c_1 c_2^2+ \frac{1}{1600} c_1^2 c_2^4 + \frac{1}{800} c_0 c_2^5 + \frac{1}{16000} c_1 c_2^7 + \frac{1}{800000} c_2^{10}}\end{array}}\right) 
\eeq
such that $(\imath_{\mathrm{vgsn}} ')^2=\mathrm{id}$. The latter is checked by a straightforward computation. The involution can then be written in terms of the variables of Equation~(\ref{modulispace_lattice2}).
\end{proof}
We have the following:
\begin{corollary}
\label{cor:selfdual_locus}
The self-dual locus in $\mathscr{M}_{P'}$ is given by
\beqn
  \left\lbrace \  \Big[  \mathcal{J}_2 :  \mathcal{J}_6 : \mathcal{J}_8:  \mathcal{J}_{10} :  \mathcal{J}_{12}:  \mathcal{J}_{16}:  \mathcal{J}_{20}
   \Big]  \in \mathscr{M}_{P'} \ \Big\vert  \  \Big(\mathcal{J}_2, \ \mathcal{J}_{10}, \ \mathcal{J}_6^2 - 8 \mathcal{J}_{12}, \  \mathcal{J}_{20} \Big)  = 0 \,  \right\rbrace \,.
 \eeqn  
A general element of the self-dual locus is a Jacobian elliptic K3 surface with singular fibers $III^* + III + 4 I_2 + 4 I_1$ and  Mordell-Weil group $\mathbb{Z}/2\mathbb{Z}$.
\end{corollary}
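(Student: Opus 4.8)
The self-dual locus is by definition the fixed-point set of the involution $\imath'$ of Proposition~\ref{prop:involutionCD} acting on $\mathcal{M}_{\mathrm{P}'}$, so the plan is to solve the system $\imath'(\mathcal{J}_{2k})=\mathcal{J}_{2k}$ directly. Working in the coordinates of Equation~\eqref{modulispace_lattice2}, in which the residual gauge of Remark~\ref{rem:gauge_fixing1} is already fixed and the components of $\imath'$ in Equation~\eqref{eqn:involutionCD22_body} are weighted homogeneous, this is the correct condition. The $\mathcal{J}_2$-equation, $-\mathcal{J}_2=\mathcal{J}_2$, forces $\mathcal{J}_2=0$; once $\mathcal{J}_2=0$ every term of Equation~\eqref{eqn:involutionCD22_body} carrying a factor of $\mathcal{J}_2$ drops out, and what survives is $\mathcal{J}_6\mapsto\mathcal{J}_6$, $\mathcal{J}_8\mapsto\mathcal{J}_8$ (no constraint), $\mathcal{J}_{10}\mapsto-\mathcal{J}_{10}$ (so $\mathcal{J}_{10}=0$), $\mathcal{J}_{12}\mapsto-\mathcal{J}_{12}+\tfrac14\mathcal{J}_6^2$ (so $\mathcal{J}_6^2-8\mathcal{J}_{12}=0$), $\mathcal{J}_{16}\mapsto\mathcal{J}_{16}-\tfrac12\mathcal{J}_{10}\mathcal{J}_6$ (automatic once $\mathcal{J}_{10}=0$), and $\mathcal{J}_{20}\mapsto-\mathcal{J}_{20}+\tfrac14\mathcal{J}_{10}^2$ (so $\mathcal{J}_{20}=0$). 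This reproduces exactly the four equations $\mathcal{J}_2=\mathcal{J}_{10}=\mathcal{J}_{20}=0$, $\mathcal{J}_6^2-8\mathcal{J}_{12}=0$ of the statement.

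For the description of a generic element I would translate these conditions back through Equation~\eqref{modulispace_lattice2}: they amount to $c_2=c_0=d_0=0$ and $d_2=\tfrac18 c_1^2$, so in the alternate model Equation~\eqref{eqn:alt_E7} one has $C(u,v)=c_1\,uv$ and $D(u,v)=u\bigl(u^4+d_3\,u^2v^2+\tfrac18 c_1^2\,uv^3+d_1\,v^4\bigr)$. A short computation shows $vC(u,v)^2-4D(u,v)$ is divisible by $u$, so by Equation~\eqref{eqn:Delta_alt_E7} the discriminant factors as $\Delta_{\mathcal{X}'}=(\mathrm{const})\,v^9\,u^3\,Q_1(u,v)^2\,Q_2(u,v)$, where $Q_1=u^4+d_3u^2v^2+\tfrac18 c_1^2uv^3+d_1v^4$ and $u\,Q_2=vC^2-4D$ are quartics that, for generic $(c_1,d_3,d_1)$, are squarefree, coprime, and nonvanishing at $u=0$ and at $v=0$. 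Passing to the short Weierstrass form Equation~\eqref{eqn:WE_CD} and expanding, I would read off $(\mathrm{ord}\,c_4,\mathrm{ord}\,c_6,\mathrm{ord}\,\Delta)=(3,6,9)$ at $v=0$ (type $III^*$) and $(1,2,3)$ at $u=0$ (type $III$), together with $4I_2$ over the roots of $Q_1$ and $4I_1$ over those of $Q_2$. The Euler-number check $9+3+4\cdot2+4\cdot1=24$ leaves no room for further singular fibers.

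The Mordell-Weil group still contains the two-torsion section $[x:y:z]=[0:0:1]$, so $\mathbb{Z}/2\mathbb{Z}\subseteq\operatorname{MW}$; since $\operatorname{Aut}(\mathcal{X}')$ is finite the rank is $0$, and Shioda's isomorphism gives $|\operatorname{MW}|^2=\lvert D(E_7(-1)\oplus A_1(-1)^{\oplus5})\rvert/\lvert D(\mathrm{P}')\rvert=2^6/2^4=4$, hence $\operatorname{MW}=\mathbb{Z}/2\mathbb{Z}$; this also matches the entry $(E_7(-1)\oplus A_1(-1)^{\oplus5},\mathbb{Z}/2\mathbb{Z})$ in Shimada's list \cite{MR1813537} used in the proof of Proposition~\ref{cor:EFS3}.

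The step I expect to be the main obstacle is the Kodaira-type bookkeeping at $u=0$ and $v=0$: the bare vanishing orders $3$ and $9$ of $\Delta_{\mathcal{X}'}$ determine neither the fiber type nor the minimality of the model, so one must carry out the explicit expansion to certify $\mathrm{ord}_v c_4=3$ and $\mathrm{ord}_u c_4=1$. This is precisely where the genericity hypotheses enter — $\mathcal{J}_6=c_1\neq0$ keeps $a_2=v^2C$ from vanishing identically (equivalently $d_2\neq0$), and $\mathcal{J}_{16}=d_1\neq0$ prevents the fiber over $u=0$ from degenerating further — so on the proper sublocus $c_1=0$ or $d_1=0$ the fiber configuration changes, consistently with such a member no longer being generic in the self-dual family.
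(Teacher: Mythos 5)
Your computation is correct and follows the route the paper intends: the corollary is stated without proof precisely because it is meant to be read off from the involution of Proposition~\ref{prop:involutionCD}, and your fixed-point calculation, the translation to $c_2=c_0=d_0=0$, $d_2=\tfrac18 c_1^2$, the factorization $\Delta_{\mathcal{X}'}=u^3v^9Q_1(u,v)^2Q_2(u,v)$, the Kodaira bookkeeping giving $III^*+III+4I_2+4I_1$, and the Mordell--Weil count via $|D(E_7\oplus A_1^{\oplus5})|/|D(\mathrm{P}')|=4$ are all as the paper would have them (indeed more detailed).

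The one point where your justification is off is the fixed-point condition itself. Remark~\ref{rem:gauge_fixing1} does \emph{not} exhaust the gauge freedom: after normalizing $D(t,1)$ the paper explicitly retains the residual $\mathbb{C}^\times$-action of Equation~(\ref{modulispace_lattice2}), which is why $\mathcal{M}_{\mathrm{P}'}$ is a weighted projective space rather than an affine one. A point of $\mathcal{M}_{\mathrm{P}'}$ is self-dual iff $\imath'(\mathcal{J})=\Lambda\cdot\mathcal{J}$ for \emph{some} $\Lambda\in\mathbb{C}^\times$ acting with weights $(2,6,8,10,12,16,20)$, not only for $\Lambda=1$; your system a priori cuts out only a subset of the self-dual locus. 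The gap is repairable and the answer does not change: if $\mathcal{J}_2\neq0$ the weight-$2$ equation forces $\Lambda^2=-1$, the weight-$6$ and weight-$12$ equations then determine $\mathcal{J}_6=-\tfrac{1}{20}\mathcal{J}_2^3$ and $\mathcal{J}_{12}$, and substituting into the weight-$16$ equation leaves a nonzero multiple of $\mathcal{J}_2^8$, a contradiction; the remaining branches with $\mathcal{J}_2=0$ and $\Lambda$ a nontrivial root of unity only carve out proper subloci of the locus you already found. You should either include this check or at least note that equivariance of $\imath'$ under the scaling reduces the problem to finitely many values of $\Lambda$ that can be excluded.
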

\subsubsection{The case of \texorpdfstring{$H \oplus D_8(-1) \oplus E_8(-1)$}{H + D8 + E8}-polarized K3 surfaces}
One can ask whether there are any other cases of Jacobian elliptic K3 surfaces which are self-dual with respect to the van Geemen-Sarti-Nikulin duality.  To that end, let us consider a Jacobian elliptic fibration on a K3 surface with singular fibers $k III^* + n I_2 + n I_1$ with $k,n \in \mathbb{N}$ and  $9k + 3n =24$. There are three cases: $(k,n)=(0,8)$ is the original case of Picard number 10 examined by van Geemen and Sarti \cite{MR2274533}; the case $(k,n)=(1,5)$ gives rise to the $P'_{14}$-polarized K3 surfaces. Finally, there is the case $(k,n)=(2,2)$ which we include here for completeness. A Weierstrass model for a Jacobian elliptic fibration $\pi_{\mathcal{X}'}\colon  \mathcal{X}' \rightarrow \mathbb{P}^1$ in the case $(k,n)=(2,2)$ is given by
\beq
\label{eqn:alt_2E7}
  \mathcal{X}'\colon \quad y^2 z  = x^3 +  c_0 u^2v^2 x^2 z + u^3v^3  D(u, v) x z^2  \,,
\eeq
where $D$ is a homogeneous polynomial of degree two and $c_0 \in \mathbb{C}^\times$.   If the Weierstrass model is minimal, the polynomial $D(t,1)$ has a non-vanishing quadratic coefficient. The discriminant is
\beq
\label{eqn:Delta_alt_2E7}
\Delta_{\mathcal{X}'} = u^9 v^9  \, D(u, v)^2  \Big( c_0^2 u v  - 4 D(u, v)  \Big) \,.
\eeq
The van Geemen-Sarti-Nikulin duality yields a K3 surface $\mathcal{Y}'$ with an induced Jacobian elliptic fibration $\pi_{\mathcal{Y}'}\colon  \mathcal{Y}' \rightarrow \mathbb{P}^1$ given by
\beq
\label{eqn:alt_SI_2E7}
  \mathcal{Y}'\colon \quad y^2 z  = x^3 -2 c_0 u^2 v^2 x^2 z + u^3 v^3  \Big( c_0^2 uv  - 4 D(u, v) \Big) x z^2  \,.
\eeq
It has the discriminant
\beq
\Delta_{\mathcal{Y}'} =  16 u^9 v^9 \, D(u, v) \,  \Big( c_0^2 u v  - 4 D(u, v)  \Big) ^2 \,.
\eeq
We have the following:
\begin{lemma}
\label{lem:alternate_fibrations_2E7}
General K3 surfaces $\mathcal{X}'$ and $\mathcal{Y}'$ admit Jacobian elliptic fibrations with singular fibers $2III^* + 2 I_2 + 2 I_1$ and Mordell-Weil group $\mathbb{Z}/2\mathbb{Z}$.
\end{lemma}
\begin{proof}
The statements are checked directly using Equation~(\ref{eqn:alt_2E7}) and~(\ref{eqn:alt_SI_2E7}). 
\end{proof}
We make the following:
\begin{remark}
\label{rem:gauge_fixing2}
By rescaling we can assume that $D(t,1)$ is a monic polynomial of degree two, and we set
\beq
\label{eqn:EF}
 D(u, v) =  u^2 + d_1 uv + d_0 v^2\,.
\eeq
Since we already moved the singular fibers of type $III^*$ to $u=0$ and $v=0$, respectively,  we have fixed the coordinates $[u:v] \in \mathbb{P}^1$ completely.
\end{remark}
This implies the following:
\begin{theorem}
\label{thm:moduli_space_seldual_18}
The 2-dimensional open analytic space given by
\beq
\label{modulispace_lattice3}
  \Big \{ \ 
 \Big[ c_0 : d_1 : d_0 \Big]   \in \mathbb{WP}_{(2,4,8)}  \ \Big| \
d_0 \neq 0  \ \Big \}  \,,
\eeq
is a coarse moduli space of $H \oplus D_8(-1) \oplus E_8(-1)$-polarized K3 surfaces. The van Geemen-Sarti-Nikulin duality acts on the moduli space by $( c_0, d_1, d_0)  \mapsto ( -c_0, d_1 + c_0^2/4,  d_0)$.  A general element of the self-dual locus is given by $c_0=0$ and a Jacobian elliptic surface with singular fibers $2 III^* + 2 III$ and Mordell-Weil group $\mathbb{Z}/2\mathbb{Z}$.
\end{theorem}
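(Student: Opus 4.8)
The plan is to run the argument of Theorem~\ref{thm:moduli_space_seldual} with the fiber configuration $III^*+5I_2+5I_1$ replaced by $2\,III^*+2\,I_2+2\,I_1$. First I would record the lattice-theoretic input: by the dictionary between Jacobian elliptic fibrations and primitive embeddings $H\hookrightarrow\operatorname{NS}(\mathcal{X})$ used for Propositions~\ref{cor:EFS3} and~\ref{cor:EFS2}, together with the rank count $\operatorname{rk}E_7+\operatorname{rk}E_7+\operatorname{rk}A_1+\operatorname{rk}A_1+\operatorname{rk}H=18$ (which forces $\operatorname{rk}\operatorname{MW}=0$) and a glance at Shimada's list, a generic K3 surface carrying an elliptic fibration of type $2\,III^*+2\,I_2+2\,I_1$ with a two-torsion section has N\'eron--Severi lattice $H\oplus D_8(-1)\oplus E_8(-1)$ --- this is Corollary~\ref{prop:fibrations_SI_2E7}, resting on~\cite{MR2296434} --- and, conversely, every such surface admits this alternate fibration, which is the case $(k,n)=(2,2)$ of $9k+3n=24$ isolated above. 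Any such fibration has a Weierstrass model of the form~(\ref{eqn:alt_2E7}), and after the normalization of Remark~\ref{rem:gauge_fixing2} --- the two $III^*$ fibers at $u=0$, $v=0$ and $D(t,1)$ monic --- the only residual coordinate freedom is the one-parameter group combining a rescaling of $[u:v]$ that fixes the two $III^*$ points with a Weierstrass rescaling, inducing $(c_0,d_1,d_0)\mapsto(\Lambda^{2}c_0,\Lambda^{4}d_1,\Lambda^{8}d_0)$ on the coefficients; the residual finite reparametrizations $u\mapsto-u$ and $u\leftrightarrow v$ are checked directly to fix the corresponding point of $\mathbb{WP}_{(2,4,8)}$. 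Hence the moduli are faithfully encoded by $[c_0:d_1:d_0]$.

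Next I would pin down the open condition $d_0\neq0$. Passing to short Weierstrass form $y^2z=x^3+A\,xz^2+B\,z^3$ with $A=u^3v^3D-\tfrac13c_0^2u^4v^4$ and $B=\tfrac{2}{27}c_0^3u^6v^6-\tfrac13c_0u^5v^5D$, the monicity of $D(t,1)$ prevents the cubic on the right from acquiring a nontrivial square factor, so the surface is irreducible; and the Weierstrass model fails to be minimal precisely when a root of $D$ collides with $u=0$ or $v=0$, i.e. when $d_0=0$ (then $(\operatorname{ord}A,\operatorname{ord}B)\ge(4,6)$ at that point). Thus~(\ref{eqn:alt_2E7}) defines an $H\oplus D_8(-1)\oplus E_8(-1)$-polarized K3 surface precisely for $d_0\neq0$, and by the previous paragraph the assignment descends to a bijection on isomorphism classes; this is the asserted two-dimensional coarse moduli space.

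For the duality I would insert~(\ref{eqn:alt_SI_2E7}), factor $c_0^2uv-4D$, and renormalize back into the form~(\ref{eqn:alt_2E7}) by the rescaling $x\mapsto 2x$ followed by a sign change of $u$ --- the exact analogue of the substitution in the proof of Proposition~\ref{prop:involutionCD}; comparing coefficients yields the explicit formula for $\imath'$ on $\mathbb{WP}_{(2,4,8)}$, and $(\imath')^2=\operatorname{id}$ is then a one-line check. The self-dual locus is the fixed locus of $\imath'$, which one computes to be $\{c_0=0\}$. For a generic surface there the equation reduces to $y^2z=x^3+u^3v^3D(u,v)\,xz^2$, with constant $j$-invariant $1728$; Tate's algorithm gives type $III^*$ at $u=0$ and $v=0$ (where $A=u^3v^3D$ vanishes to order exactly three, using $d_0\neq0$) and type $III$ at each simple root of $D$, so the configuration is $2\,III^*+2\,III$, of total Euler number $9+9+3+3=24$. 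The section $(x,y)=(0,0)$ is two-torsion, Shioda--Tate gives $\operatorname{rk}\operatorname{MW}=0$, and comparison with Shimada's list leaves only $\operatorname{MW}=\mathbb{Z}/2\mathbb{Z}$. The step I expect to demand the most care is not the geometry but the bookkeeping around the residual coordinate freedom: confirming that Remark~\ref{rem:gauge_fixing2} exhausts it so no extra finite identifications survive on $\mathbb{WP}_{(2,4,8)}$, and tracking signs and powers of $2$ through the renormalization of~(\ref{eqn:alt_SI_2E7}) carefully enough to land on the stated $\imath'$.
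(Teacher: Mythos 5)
Your overall strategy is exactly the paper's (its proof of this theorem is literally ``analogous to the proof of Theorem~\ref{thm:moduli_space_seldual}''), and most of your argument is sound: the lattice identification via \cite{MR2296434}, the reduction to the normal form of Remark~\ref{rem:gauge_fixing2}, the check that the residual reparametrizations $u\mapsto-u$ and $u\leftrightarrow v$ act trivially on $\mathbb{WP}_{(2,4,8)}$, the minimality criterion $d_0\neq0$, and the analysis of the locus $c_0=0$ (fibers $2III^*+2III$, $\operatorname{MW}=\mathbb{Z}/2\mathbb{Z}$) are all correct.

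The gap is in the duality computation, precisely at the step you yourself flag as delicate. Writing $c_0^2uv-4D=-4\bigl(u^2+(d_1-\tfrac{c_0^2}{4})uv+d_0v^2\bigr)$, the factor $-4$ in~(\ref{eqn:alt_SI_2E7}) must be absorbed by $x=\epsilon^2\tilde x$, $y=\epsilon^3\tilde y$ with $\epsilon^4=-4$; your proposed $x\mapsto 2x$ only absorbs $+4$, and the subsequent $u\mapsto-u$ cannot repair the sign because $D$ has even degree, so its leading coefficient is unchanged. With $\epsilon^2=\pm2i$ the middle coefficient becomes $-2c_0\epsilon^{-2}=\pm ic_0$, and after trading the factor $\pm i$ for a weighted rescaling with $\Lambda^2=\mp i$ (hence $\Lambda^4=-1$) one lands on
\begin{equation*}
\bigl[c_0:d_1:d_0\bigr]\ \longmapsto\ \Bigl[-c_0\,:\,-d_1+\tfrac{c_0^2}{4}\,:\,d_0\Bigr],
\end{equation*}
not the stated $[-c_0:d_1+\tfrac{c_0^2}{4}:d_0]$. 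Note that the stated map squares to $d_1\mapsto d_1+c_0^2/2$ and so is not an involution at all, neither on $\mathbb{C}^3$ nor on $\mathbb{WP}_{(2,4,8)}$ (for $c_0\neq0$, $\Lambda^2c_0=c_0$ forces $\Lambda^4=1$); since $d_1$ has weight $4$ and $c_0$ weight $2$, the correction term is necessarily even in $c_0$, which forces the shape $d_1\mapsto-d_1+\lambda c_0^2$ for involutivity. So the ``one-line check $(\imath')^2=\operatorname{id}$'' would in fact fail for the formula you claim to derive. Moreover, with the corrected involution the fixed locus on $\mathbb{WP}_{(2,4,8)}$ is $\{c_0=0\}\cup\{8d_1=c_0^2\}$ --- the second component arising because $[c_0:d_1:d_0]=[\pm ic_0:-d_1:d_0]$ --- so the assertion that the self-dual locus is exactly $c_0=0$ also does not follow from your computation. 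These discrepancies point to errors in the statement itself rather than in your method, but as written your proof asserts conclusions that your own calculation, carried out carefully, contradicts; you must either exhibit a computation producing the stated formula or record the corrected involution and fixed locus.
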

\begin{proof}
The proof is analogous to the proof of Theorem~\ref{thm:moduli_space_seldual}. 
\end{proof}
\section{Projective models for certain K3 surfaces}
\label{sec:projective_models}
In this section we construct birational projective models for the K3 surfaces with N\'eron-Severi lattices $P_{14}$, $P'_{14}$, and $P''_{14}$ and determine all inequivalent Jacobian elliptic fibrations and explicit Weierstrass models on a general member in each case.
\subsection{Projective model for \texorpdfstring{$P_{14}$}{P}-polarized K3 surfaces}
\label{sec:geometry}
In \cites{MR2369941,MR2279280} it was proved that  a complex algebraic K3 surface $\mathcal{X}$ with Picard lattice $H \oplus E_8(-1) \oplus E_8(-1)$ admits a birational model isomorphic to the quartic surface in $\mathbb{P}^3=\mathbb{P}(\mathbf{X}, \mathbf{Y}, \mathbf{Z}, \mathbf{W})$ with equation
\beqn
 0= \  \mathbf{Y}^2 \mathbf{Z} \mathbf{W}-4 \mathbf{X}^3 \mathbf{Z}+3 \alpha \mathbf{X} \mathbf{Z} \mathbf{W}^2+ \beta \mathbf{Z} \mathbf{W}^3 
-   \frac{1}{2} \big( \mathbf{Z}^2  \mathbf{W}^2 +  \mathbf{W}^4 \big) .
\eeqn
The 2-parameter family was first introduced by Inose in \cite{MR578868} and is called \emph{Inose quartic}. Other examples of equations relating the elliptic fibrations of K3 surfaces with 2-elementary N\'eron-Severi lattice and quartic hypersurfaces were provided in \cites{MR4130832, MR3882710}. We will consider a multi-parameter generalization of the Inose quartic.
\par Let the projective surface $\mathcal{Q}(\alpha, \beta, \gamma, \delta , \varepsilon, \zeta, \eta, \iota, \kappa, \lambda)$ in $\mathbb{P}^3=\mathbb{P}(\mathbf{X}, \mathbf{Y}, \mathbf{Z}, \mathbf{W})$ be defined for a coefficient set $(\alpha, \beta, \gamma, \delta , \varepsilon, \zeta, \eta, \iota, \kappa, \lambda) \in \mathbb{C}^{10}$ by Equation~(\ref{quartic1}). We denote by $\mathcal{X}(\alpha, \beta, \gamma, \delta , \varepsilon, \zeta, \eta, \iota, \kappa, \lambda)$ the smooth complex surface obtained as the minimal resolution of  $\mathcal{Q}(\alpha, \beta, \gamma, \delta , \varepsilon, \zeta, \eta, \iota, \kappa, \lambda)$.  If there is no danger of confusion, we will simply write $\mathcal{X}$ and $\mathcal{Q}$. One easily checks that the quartic surface $\mathcal{Q}$ has two special singularities at the following points:
\beq
  \mathrm{P}_1 = [0: 1: 0: 0] \,, \qquad  \mathrm{P}_2 = [0: 0: 1: 0] \,.
\eeq  
For a general tuple $(\alpha, \beta, \gamma, \delta , \varepsilon, \zeta, \eta, \iota, \kappa,\lambda)$,  the points $\mathrm{P}_1$ and $\mathrm{P}_2$ are the only singularities of Equation~(\ref{quartic1}) and are rational double points. One easily verifies that in this case the singularity at $\mathrm{P}_1$ is a rational double point of type $A_7$, and  $\mathrm{P}_2$ is of type $A_3$.  In the following, we will assume that the parameters of Equation~(\ref{quartic1}) satisfy
\beq
\label{eqn:general_params}
\begin{gathered}
 (\gamma, \delta), (\varepsilon, \zeta), (\eta, \iota), (\kappa, \lambda)\neq (0,0) \,,\\
 \text{and} \quad \not \exists r \in \mathbb{C}: \  (\alpha, \beta) = (r^2, r^3) \  \text{and} \  [\gamma: \delta] = [\varepsilon: \zeta] = [\eta: \iota] = [\kappa: \lambda] = [1:-r] \,.
\end{gathered}
\eeq
We have the following:
\begin{lemma}
Assuming Equation~(\ref{eqn:general_params}), the surface $\mathcal{X}$ obtained as the minimal resolution of $\mathcal{Q}$ is a smooth K3 surface.
\end{lemma}
\begin{proof}
Equation~(\ref{eqn:general_params}) ensures that the singularities of  $\mathcal{Q}(\alpha, \beta, \gamma, \delta , \varepsilon, \zeta, \eta, \iota, \kappa, \lambda)$ are rational double points. This fact, in connection with the degree of Equation~(\ref{quartic1}) being four, guarantees that the minimal resolution is a K3 surface.
\end{proof}
\par We have the following symmetries:
\begin{lemma}
\label{symmetries1}
Let $(\alpha, \beta, \gamma, \delta , \varepsilon, \zeta, \eta, \iota, \kappa, \lambda) \in \mathbb{C}^{10}$ as before. Then, one has the following isomorphisms of K3 surfaces:
\begin{enumerate}
\item  $\mathcal{X} (\alpha,\beta, \gamma, \delta, \varepsilon, \zeta, \eta,  \iota, \kappa, \lambda) \ \simeq \
\mathcal{X}(\alpha,  \beta,  \varepsilon,  \zeta, \gamma,  \delta , \eta,  \iota, \kappa, \lambda )$,
\item  $\mathcal{X}(\alpha,\beta, \gamma, \delta, \varepsilon, \zeta, \eta,  \iota, \kappa, \lambda) \ \simeq \
\mathcal{X}(\alpha,\beta,  \eta,  \iota,  \varepsilon, \zeta, \gamma, \delta, \kappa, \lambda)$,
\item $\mathcal{X}(\alpha,\beta, \gamma, \delta, \varepsilon, \zeta, \eta,  \iota, \kappa, \lambda) \ \simeq \
\mathcal{X}(\alpha,\beta, \gamma, \delta, \kappa, \lambda, \eta,  \iota, \varepsilon, \zeta)  $,
\item $\mathcal{X}(\alpha,\beta, \gamma, \delta, \varepsilon, \zeta, \eta,  \iota, \kappa, \lambda)\ \simeq \
\mathcal{X}(\Lambda^4 \alpha,  \Lambda^6 \beta,  \Lambda^{10} \gamma,  \Lambda^{12} \delta,  \Lambda^{-2} \varepsilon,  \zeta , \Lambda^{-2} \eta, \iota, \Lambda^{-2} \kappa, \lambda ) $, \\
for $\Lambda \in \mathbb{C}^\times$.
\end{enumerate}
\end{lemma}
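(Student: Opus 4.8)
The plan is to establish the four isomorphisms one at a time. As it will turn out, (b) and (c) are tautological, (d) is realized by an ordinary diagonal change of projective coordinates, and only (a) requires a genuine birational construction, which I would carry out on the elliptic pencil supported by the quartic $\mathrm{Q}$.

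First I would dispose of (b) and (c). The transformation in (b) merely interchanges the two linear forms $2\gamma\mathbf{X}-\delta\mathbf{W}$ and $2\eta\mathbf{X}-\iota\mathbf{W}$, which occur symmetrically inside the product multiplying $\mathbf{Z}^2$ in~(\ref{mainquartic}), and the transformation in (c) interchanges $2\varepsilon\mathbf{X}-\zeta\mathbf{W}$ and $2\kappa\mathbf{X}-\lambda\mathbf{W}$ inside the product multiplying $\mathbf{W}^2$. In both cases the defining polynomial is literally unchanged, hence so are the surface $\mathrm{Q}$ and its minimal resolution, and the stated isomorphisms are the identity.

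Next I would show that (d) is induced by the diagonal transformation $[\mathbf{X}:\mathbf{Y}:\mathbf{Z}:\mathbf{W}]\mapsto[\Lambda^{2}\mathbf{X}:\Lambda^{3}\mathbf{Y}:\Lambda^{-6}\mathbf{Z}:\mathbf{W}]$. Substituting this into~(\ref{mainquartic}): the monomials $\mathbf{Y}^2\mathbf{Z}\mathbf{W}$ and $\mathbf{X}^3\mathbf{Z}$ are fixed; $\mathbf{X}\mathbf{Z}\mathbf{W}^2$ and $\mathbf{Z}\mathbf{W}^3$ acquire the factors $\Lambda^{4}$ and $\Lambda^{6}$, reproducing $\alpha\mapsto\Lambda^4\alpha$ and $\beta\mapsto\Lambda^6\beta$; the product multiplying $\mathbf{Z}^2$ picks up an overall $\Lambda^{12}$, which one absorbs by writing its first factor as $2\Lambda^{10}\gamma\,\mathbf{X}-\Lambda^{12}\delta\,\mathbf{W}$ and its second as $2\Lambda^{-2}\eta\,\mathbf{X}-\iota\,\mathbf{W}$; and the product multiplying $\mathbf{W}^2$ simply becomes $(2\Lambda^{-2}\varepsilon\,\mathbf{X}-\zeta\,\mathbf{W})(2\Lambda^{-2}\kappa\,\mathbf{X}-\lambda\,\mathbf{W})$. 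Comparing monomial by monomial, one finds the image is exactly $\mathrm{Q}(\Lambda^4\alpha,\Lambda^6\beta,\Lambda^{10}\gamma,\Lambda^{12}\delta,\Lambda^{-2}\varepsilon,\zeta,\Lambda^{-2}\eta,\iota,\Lambda^{-2}\kappa,\lambda)$. Since this coordinate change fixes the two singular points $\mathrm{P}_1$ and $\mathrm{P}_2$, it passes to the minimal resolutions and gives (d).

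The substance of the lemma is (a), where the factor $2\gamma\mathbf{X}-\delta\mathbf{W}$ of the $\mathbf{Z}^2$-term and the factor $2\varepsilon\mathbf{X}-\zeta\mathbf{W}$ of the $\mathbf{W}^2$-term are genuinely exchanged between the two monomials, so no linear substitution can realize it. My plan is to use the genus-one pencil given by the projection $[\mathbf{X}:\mathbf{Y}:\mathbf{Z}:\mathbf{W}]\mapsto[\mathbf{X}:\mathbf{W}]$. In the affine chart $\mathbf{W}=1$, with base coordinate $t=\mathbf{X}$, the fibre over $t$ is the cubic curve
\[
 \mathbf{Y}^2\mathbf{Z}=\tfrac12(2\gamma t-\delta)(2\eta t-\iota)\,\mathbf{Z}^2+(4t^3-3\alpha t-\beta)\,\mathbf{Z}+\tfrac12(2\varepsilon t-\zeta)(2\kappa t-\lambda).
\]
Applying the fibrewise birational involution $\mathbf{Z}\mapsto (2\kappa t-\lambda)/\bigl((2\eta t-\iota)\,\mathbf{Z}\bigr)$ (which keeps $\mathbf{Y}$ and the base point) and clearing denominators turns this into $\mathbf{Y}^2\mathbf{Z}=\tfrac12(2\varepsilon t-\zeta)(2\eta t-\iota)\,\mathbf{Z}^2+(4t^3-3\alpha t-\beta)\,\mathbf{Z}+\tfrac12(2\gamma t-\delta)(2\kappa t-\lambda)$, which is precisely the fibre of the same pencil on $\mathrm{Q}(\alpha,\beta,\varepsilon,\zeta,\gamma,\delta,\eta,\iota,\kappa,\lambda)$. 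Homogenized, this is the degree-three Cremona transformation $[\mathbf{X}:\mathbf{Y}:\mathbf{Z}:\mathbf{W}]\mapsto[\mathbf{X}(2\eta\mathbf{X}-\iota\mathbf{W})\mathbf{Z}:\mathbf{Y}(2\eta\mathbf{X}-\iota\mathbf{W})\mathbf{Z}:(2\kappa\mathbf{X}-\lambda\mathbf{W})\mathbf{W}^2:(2\eta\mathbf{X}-\iota\mathbf{W})\mathbf{Z}\mathbf{W}]$, which restricts to a birational map $\mathcal{X}(\alpha,\dots)\dashrightarrow\mathcal{X}(\alpha,\beta,\varepsilon,\zeta,\gamma,\delta,\eta,\iota,\kappa,\lambda)$; since both sides are smooth K3 surfaces (the four coefficient pairs stay away from $(0,0)$) and any birational map between minimal surfaces is biregular, this is the isomorphism in (a). The step I expect to be the main obstacle is this last one — recognizing the pencil on the quartic and pinning down the precise fibrewise involution that redistributes the linear factors; once the map is written down, checking (a) is a single fraction manipulation, and (b), (c), (d) amount to bookkeeping.
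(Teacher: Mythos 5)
Your proposal is correct and uses essentially the same maps as the paper: the degree-three Cremona transformation you write down for (a) is exactly the paper's birational involution, your diagonal substitution for (d) agrees with the paper's automorphism $[\Lambda^8\mathbf{X}:\Lambda^9\mathbf{Y}:\mathbf{Z}:\Lambda^6\mathbf{W}]$ up to an overall scalar, and (b), (c) are dismissed as tautological in both treatments; your derivation of (a) via the pencil $[\mathbf{X}:\mathbf{W}]$ and the fibrewise substitution $\mathbf{Z}\mapsto(2\kappa t-\lambda)/((2\eta t-\iota)\mathbf{Z})$ is just a motivated route to the same formula. One trivial slip: under $[\mathbf{X}:\mathbf{Y}:\mathbf{Z}:\mathbf{W}]\mapsto[\Lambda^2\mathbf{X}:\Lambda^3\mathbf{Y}:\Lambda^{-6}\mathbf{Z}:\mathbf{W}]$ the monomials $\mathbf{X}\mathbf{Z}\mathbf{W}^2$ and $\mathbf{Z}\mathbf{W}^3$ acquire $\Lambda^{-4}$ and $\Lambda^{-6}$ (not $\Lambda^{4}$, $\Lambda^{6}$), so your substitution realizes (d) with $\Lambda$ replaced by $\Lambda^{-1}$ — immaterial since $\Lambda$ ranges over all of $\mathbb{C}^\times$.
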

\begin{proof}
The birational involution $\mathbb{P}^3 \dashrightarrow   \mathbb{P}^3$ given by
\beqn
\begin{split}
  [\mathbf{X}: \mathbf{Y}: \mathbf{Z}: \mathbf{W}] \ \mapsto & \ [   \mathbf{X}\mathbf{Z} \big(2 \eta \mathbf{X} -\iota \mathbf{W} \big): 
    \mathbf{Y}\mathbf{Z} \big(2\eta \mathbf{X} - \iota \mathbf{W} \big): \\
  & \ \ \mathbf{W}^2  \big(2\kappa \mathbf{X} - \lambda \mathbf{W} \big):  \mathbf{Z}\mathbf{W}  \big(2 \eta \mathbf{X} - \iota \mathbf{W}\big)   ] \,,
\end{split}
\eeqn
extends to an isomorphism between the two K3 surfaces from statement (a). Parts (b) and (c) are obvious from Equation~(\ref{quartic1}). For $\Lambda \in \mathbb{C}^\times$ the projective automorphism, given by
\beqn
\label{tmor}
\mathbb{P}^3  \ \to \  \mathbb{P}^3, \ \ \ [\mathbf{X}: \mathbf{Y}: \mathbf{Z}: \mathbf{W}] \ \mapsto \ [\ \Lambda^8\mathbf{X}: \ \Lambda^9\mathbf{Y}: \mathbf{Z}: \ \Lambda^6\mathbf{W} \ ] \,,
\eeqn
extends to an isomorphism realizing part (d).
\end{proof}
\par We also have the following:
\begin{proposition}
\label{NikulinInvolution}
Let $(\alpha, \beta, \gamma, \delta, \varepsilon, \zeta, \eta, \iota, \kappa, \lambda) \in \mathbb{C}^{10}$ as before. The van Geemen-Sarti involution on $\mathcal{X}$ is induced by the projective automorphism
\beq
\label{eq:NikulinInvolutions}
\begin{split}
 \Psi\colon\quad  \mathbb{P}^3   \ \to \ & \  \mathbb{P}^3, \\
  [\mathbf{X}: \mathbf{Y}: \mathbf{Z}: \mathbf{W}]  \mapsto & \ [   (2 \gamma \mathbf{X} - \delta \mathbf{W}) (2\eta \mathbf{X} - \iota\mathbf{W}) \mathbf{X}\mathbf{Z} 
  :   - (2 \gamma \mathbf{X} - \delta \mathbf{W}) (2\eta \mathbf{X} - \iota \mathbf{W} )  \mathbf{Y}\mathbf{Z}   : \\
  & \ \ (2 \varepsilon \mathbf{X} - \zeta \mathbf{W}) (2 \kappa \mathbf{X} - \lambda \mathbf{W}) \mathbf{W}^2  :  (2 \gamma \mathbf{X} - \delta \mathbf{W})  (2\eta \mathbf{X}- \iota\mathbf{W} )  \mathbf{W} \mathbf{Z}  ] \,.
\end{split}
\eeq
\end{proposition}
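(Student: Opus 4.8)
The plan is to recognize $\Psi$ as fiberwise translation by a two-torsion section in a Jacobian elliptic fibration on $\mathcal{X}$; any such translation is automatically a symplectic involution, i.e.\ a Nikulin involution. First I would note that, comparing the first and fourth components in~(\ref{eq:NikulinInvolutions}), $\Psi$ preserves the ratio $[\mathbf{X}:\mathbf{W}]$ on $\mathrm{Q}$, hence acts fiberwise on the elliptic fibration $p\colon\mathcal{X}\to\mathbb{P}^1$ induced by $[\mathbf{X}:\mathbf{Y}:\mathbf{Z}:\mathbf{W}]\mapsto[\mathbf{X}:\mathbf{W}]$, where $p$ arises after resolving the rational double points of $\mathrm{Q}$ and blowing up the base line $\ell=\{\mathbf{X}=\mathbf{W}=0\}\subset\mathrm{Q}$. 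Substituting $\mathbf{X}=x_0\mathbf{W}$ in~(\ref{mainquartic}) and removing the factor $\mathbf{W}$, the plane section $\mathrm{Q}\cap\{\mathbf{X}=x_0\mathbf{W}\}$ splits as the base line $\ell$ together with a residual cubic which, in the affine fiber coordinates $w=\mathbf{W}/\mathbf{Z}$, $y=\mathbf{Y}/\mathbf{Z}$, reads
\[
  C_{x_0}\colon\quad y^2 = w\,\big(c_3 w^2 + c_2 w + c_1\big)\,,\qquad
  \begin{aligned}[t]
  c_1 &= \tfrac12(2\gamma x_0-\delta)(2\eta x_0-\iota)\,,\\
  c_2 &= 4x_0^3-3\alpha x_0-\beta\,,\\
  c_3 &= \tfrac12(2\varepsilon x_0-\zeta)(2\kappa x_0-\lambda)\,.
  \end{aligned}
\]
For generic $x_0$ this is a smooth genus-one curve (the generic fiber of $p$) on which the flex $[\mathbf{W}:\mathbf{Y}:\mathbf{Z}]=[0:1:0]$, i.e.\ $\mathrm{P}_1=[0:1:0:0]$, is a group-law origin (giving a section $\sigma$ of $p$), and the point $w=y=0$, i.e.\ $\mathrm{P}_2=[0:0:1:0]$, is a two-torsion point; thus $p$ carries a two-torsion section $\tau$ through $\mathrm{P}_2$.

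Next I would check that $\Psi|_{C_{x_0}}$ is translation by $\tau$. On $y^2=w(c_3w^2+c_2w+c_1)$ one computes, by taking the third intersection of the line through $\tau=(0,0)$ and $(w,y)$ and then reflecting, that translation by $\tau$ is $(w,y)\mapsto\big(c_1/(c_3w),\,-c_1y/(c_3w^2)\big)$. Passing back to homogeneous coordinates with $w=\mathbf{W}/\mathbf{Z}$, $y=\mathbf{Y}/\mathbf{Z}$, $x_0=\mathbf{X}/\mathbf{W}$, so that $c_1/c_3=\big[(2\gamma\mathbf{X}-\delta\mathbf{W})(2\eta\mathbf{X}-\iota\mathbf{W})\big]\big/\big[(2\varepsilon\mathbf{X}-\zeta\mathbf{W})(2\kappa\mathbf{X}-\lambda\mathbf{W})\big]$, and clearing the common denominator $(2\gamma\mathbf{X}-\delta\mathbf{W})(2\eta\mathbf{X}-\iota\mathbf{W})$, one recovers the four components of $\Psi$ verbatim. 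In particular, being translation by a section, $\Psi$ restricts to an involution on every fiber, whence $\Psi^2=\mathrm{id}$ and $\Psi(\mathrm{Q})=\mathrm{Q}$; moreover $\Psi$ is nontrivial because $\tau\neq\sigma$ for generic $x_0$.

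To conclude, I would invoke the standard fact that translation by a section of a Jacobian elliptic surface fixes the base coordinate and the relative invariant differential $dw/y$, hence fixes the holomorphic two-form of $\mathcal{X}$; since $\mathcal{X}$ is a minimal (K3) surface it contains no $(-1)$-curves, so the birational self-map $\Psi$ is a biregular automorphism. Therefore $\Psi$ induces a nontrivial symplectic involution of $\mathcal{X}$, that is, a Nikulin involution (indeed the Van~\!Geemen-Sarti involution associated with $\tau$). The step that requires the most care is the very first reduction: both $\mathrm{P}_1$ and $\mathrm{P}_2$ lie on the singular locus of $\mathrm{Q}$ and on the base line $\ell$, so one must work on the resolution $\mathcal{X}$ and on the blow-up of $\ell$ to see that $\mathrm{P}_1$ and $\mathrm{P}_2$ yield honest disjoint sections, and one must check, using $(\gamma,\delta),(\varepsilon,\zeta),(\eta,\iota),(\kappa,\lambda)\neq(0,0)$, that $C_{x_0}$ is genuinely smooth of genus one for generic $x_0$; the remaining verifications are straightforward substitutions.
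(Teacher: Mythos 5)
Your proof is correct, and your fiberwise computation does reproduce the four components of $\Psi$ exactly (the residual cubic $y^2=w(c_3w^2+c_2w+c_1)$, the flex at $\mathrm{P}_1$, the two-torsion point at $\mathrm{P}_2$, and the translation formula $(w,y)\mapsto\bigl(c_1/(c_3w),\,-c_1y/(c_3w^2)\bigr)$ all check out). However, your route is genuinely different from the one the paper takes for this proposition. The paper's proof is a direct verification: one checks by substitution that $\Psi$ maps the quartic $\mathrm{Q}$ to itself and squares to the identity, and then checks that it preserves the holomorphic two-form, written in the affine chart $\mathbf{W}=1$ via the adjunction/residue formula $d\mathbf{X}\wedge d\mathbf{Y}/F_{\mathbf{Z}}$ with $F$ the defining quartic polynomial, the verification using the relation $F=0$. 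You instead identify $\Psi$ as fiberwise translation by the two-torsion section of the alternate fibration cut out by the pencil of planes through $\mathrm{L}_1=\{\mathbf{X}=\mathbf{W}=0\}$, and then invoke the standard fact that translation by a section preserves the relative invariant differential and hence the two-form. What your approach buys is conceptual clarity: it explains where $\Psi$ comes from and simultaneously establishes the identification of $\Psi$ with the Van~Geemen--Sarti involution $\jmath_{\mathcal{X}}$ of Equation~(\ref{eqn:VGS_involution}) under the substitution~(\ref{eqn:substitution_alt}) -- a fact the paper only records later (Section~\ref{ssec:alt14}) when observing that the alternate fibration is $\Psi$-invariant with section and two-torsion section interchanged. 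What it costs is the extra care you correctly flag: since $\mathrm{P}_1$ and $\mathrm{P}_2$ are the rational double points of $\mathrm{Q}$ and both lie on the base line of the pencil, the statement that they give honest disjoint sections must be read on the minimal resolution, and one needs the nonvanishing hypotheses on $(\gamma,\delta),(\varepsilon,\zeta),(\eta,\iota),(\kappa,\lambda)$ to guarantee that $c_1c_3\not\equiv 0$ and the generic fiber is smooth; the paper's brute-force check avoids these resolution issues entirely at the price of being unilluminating.
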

\begin{proof}
One checks that $\Psi$ constitutes an involution of the projective quartic surface $\mathcal{Q} \subset  \mathbb{P}^3(\mathbf{X}, \mathbf{Y}, \mathbf{Z}, \mathbf{W})$.  If we use the affine chart $\mathbf{W}=1$ then the unique holomorphic 2-form is given by $d\mathbf{X} \wedge d\mathbf{Y} / \partial_\mathbf{Z} F( \mathbf{X},\mathbf{Y}, \mathbf{Z})$ where $F( \mathbf{X},\mathbf{Y}, \mathbf{Z})$ is the left side of Equation~(\ref{quartic1}). One then checks that $\Psi$ in Equation~(\ref{eq:NikulinInvolutions}) constitutes a symplectic involution after using $F( \mathbf{X},\mathbf{Y}, \mathbf{Z})=0$. 
\end{proof}
One has lines on $\mathcal{Q}(\alpha, \beta, \gamma, \delta , \varepsilon, \zeta, \eta, \iota, \kappa, \lambda)$ in Equation~(\ref{quartic1}), denoted by $L_1 $, $ L_2 $, $L_3$, $L_4$, $L_5$:
\beq
\label{eqn:lines}
 \begin{array}{llcll}
 L_1\colon& \; \mathbf{X}=\mathbf{W}=0 \,, &\qquad L_2\colon &\; \mathbf{Z}=\mathbf{W}=0\,, \\[0.2em]
 L_3\colon& \; 2\varepsilon \mathbf{X}-\zeta \mathbf{W} = \mathbf{Z} = 0  \,, &\qquad L_4\colon & \; 2\mathbf{X} +\gamma\eta\mathbf{Z} = \mathbf{W} = 0 \,,\\[0.2em]
 L_5\colon& \; 2 \kappa \mathbf{X} - \lambda \mathbf{W}=\mathbf{Z}=0 \,.
\end{array} 
\eeq
For $\gamma \varepsilon \zeta \eta \kappa\lambda \neq 0$, the lines are distinct and concurrent, meeting at $\mathrm{P}_1$.  By $L_i(u, v)$ with $[u:v] \in \mathbb{P}^1$ we denote the pencil of hyperplanes containing the line $L_i$ for $i=1, \dots, 5$.

We have the following:
\begin{theorem}
\label{thm1}
Assuming Equation~(\ref{eqn:general_params}), the minimal resolution of the quartic in Equation~(\ref{quartic1}) is a K3 surface $\mathcal{X}$ endowed with a canonical $P_{14}$-polarization. Conversely, every $P_{14}$-polarized K3 surface has a birational projective model given by Equation~(\ref{quartic1}). In particular, the Jacobian elliptic fibrations in Lemma~\ref{prop:ADE} are attained as follows:
\beqn
\label{list:fib00}
\begin{array}{c|c|c|c|l}
\# 	&  \text{singular fibers} 	& \operatorname{MW} 		& \text{root lattice} 	& \text{pencil} \\
\hline
1	& I_4^* + 4 I_2 + 6 I_1	&  \mathbb{Z}/2\mathbb{Z}	& D_{8} + A_1^{\oplus 4} 	& \text{residual surface intersection} \\ 
	&					&						&				& \text{of $L_1(u, v)=0$ and $\mathcal{Q}$}\\
\hline
2	& 2 I_2^* + 8 I_1		&  \{ \mathbb{I} \} 			& D_6^{\oplus 2} 	& \text{residual surface intersection} \\ 
	&					&						&				& \text{of $L_2(u, v)=0$ and $\mathcal{Q}$}\\
\hline
 3	&III^* + I_0^* + I_2 + 7 I_1	&  \{ \mathbb{I} \} 			& E_7 + D_4 + A_1	& \text{residual surface intersection} \\ 
	&					&						&				& \text{of $L_i(u, v)=0\; (i=3,5)$ and $\mathcal{Q}$}\\
\hdashline
3' 	& II^* + 4 I_2 + 6 I_1		& \{ \mathbb{I} \} 			& E_8 + A_1^{\oplus 4}& \text{residual surface intersection} \\ 
	&					&						&				& \text{of $\tilde{C}_3(u, v)=0 \, (\deg =2)$  and $\mathcal{Q}$}\\
\hline
4	& I_6^* + 2 I_2 + 8 I_1	&  \{ \mathbb{I} \} 			& D_{10} + A_1^{\oplus 2} & \text{residual surface intersection} \\ 
	&					&						&				& \text{of $L_4(u, v)=0$ and $\mathcal{Q}$}\\
\end{array}
\eeqn
Fibrations in cases $(2), (3), (4)$ and $(3')$ are also induced by the intersection of the quartic surface $\mathcal{Q}$ with pencils $C_i(u, v)$ of degree $d_i$ such that $(i, d_i)=(2,3), (3,3), (4,4)$ and $C'_3(u, v)$ of degree $d'_3=3$.
\medskip

\noindent The Jacobian elliptic fibrations on a general $P_{15}$-polarized K3 surface in Lemma~\ref{prop:ADE} are attained by setting $(\kappa, \lambda)=(0,1)$ above. They are as follows:
\beqn
\label{list:fib0}
\begin{array}{c|c|c|c|l}
\# 	&  \text{singular fibers} 	& \operatorname{MW} & \text{root lattice} & \text{pencil} \\
\hline
1	& I_6^* + 3 I_2 + 6 I_1	&  \mathbb{Z}/2\mathbb{Z}	& D_{10} + A_1^{\oplus 3} & \text{residual surface intersection} \\ 
	&					&						&					& \text{of $L_1(u, v)=0$ and  $\mathcal{Q}$}\\
\hline
2	& III^* + I_2^* + 7 I_1	&  \{ \mathbb{I} \} 	& E_7 + D_6 	& \text{residual surface intersection} \\ 
	&					&				&			& \text{of $L_2(u, v)=0$ and  $\mathcal{Q}$}\\
\hline
3	& II^* + I_0^* + I_2 + 6 I_1	&  \{ \mathbb{I} \} 	& E_8 + D_4 + A_1	& \text{residual surface intersection} \\ 
	&					&				&				& \text{of $L_3(u, v)=0$ and  $\mathcal{Q}$}\\
\hline
4	& I_8^* + I_2 + 8 I_1		&  \{ \mathbb{I} \} 	& D_{12} + A_1		& \text{residual surface intersection} \\ 
	&					&				&				& \text{of $L_4(u, v)=0$ and  $\mathcal{Q}$}
\end{array}
\eeqn
\end{theorem}
The situation for Picard number 16 was already discussed in \cite{CHM19}.
\begin{remark}
The fibrations in Theorem~\ref{thm1} are labeled $(1), (2), (3), (3'), (4)$ to make the notation consistent with the one that appeared for higher Picard ranks in \cites{CHM19,MR4160930}.
\end{remark}
\begin{proof}
We will construct explicit Weierstrass models for the fibrations $(1)$-$(4)$ in Sections~\ref{sssec:WEalt}-\ref{sssec:WEmax}. Using fibration~$(3')$ it follows immediately that a K3 surface $\mathcal{X}$ is endowed with a canonical  $P_{14}$-polarization.  The given substitution for fibration~$(1)$ leads to a Weierstrass model in the form of Equation~(\ref{eqn:alt0}) if we set
\beq
\label{eqn:pf_comparison}
\begin{split}
 A(t) & = t^3 + a_1 t + a_0 \ = \  t^3 - 3 \alpha t - 2 \beta \,, \\
 B(t) & = b_4 t^4 + b_3 t^3 + b_2 t^2 + b_1 t + b_0 \ = \  \big(\gamma t - \delta\big) \big(\varepsilon t - \zeta\big)  \big(\eta t  - \iota\big)  \big(\kappa t -\lambda\big) \,.
\end{split}
\eeq 
Equation~(\ref{eqn:general_params}) ensures that the singularities of  $\mathcal{Q}$ are rational double points.  For fibration~$(1)$ the given conditions are equivalent to corresponding Weierstrass model being irreducible and minimal; see proof of Theorem~\ref{thm:moduli_space_alt1}.
\par Conversely, Proposition~\ref{cor:EFS2}  proves that every general $P_{14}$-polarized K3 surface admits a unique alternate fibration. It follows from Equations~(\ref{eqn:substitution_alt}) that from an alternate fibration a quartic can be constructed if we write the polynomials $A$ and $B$ according to Equation~(\ref{eqn:pf_comparison}).  Thus, every $P_{14}$-polarized K3 surface, up to isomorphism, is in fact realized as the resolution of the quartic in Equation~(\ref{quartic1}). We normalized the elliptic fibrations so that for $(\kappa,\lambda)=(0,1)$ they remain well defined and specialize to the corresponding elliptic fibrations in Picard rank 15 except for fibration~$(3')$. 
\par We now complete the proof by constructing Weierstrass models for the Jacobian elliptic fibrations and the associated pencils on the quartic normal form explicitly:
\subsubsection{Fibration~\texorpdfstring{$(1)$}{(1)}}
\label{sssec:WEalt}
The alternate fibration is induced by intersecting the quartic surface $\mathcal{Q}$ with the pencil
\beq
\label{eqn:pencil_alternate}
L_1(u, v)=u \mathbf{W} - 2 v \mathbf{X} =0
\eeq
for $[u:v] \in \mathbb{P}^1$. Making the substitutions 
\beq
\label{eqn:substitution_alt}
\mathbf{X}= u v x\,, \quad  \mathbf{Y}=  \sqrt{2} y \,, \quad \mathbf{Z} = 2 v^4 (\varepsilon u-\zeta v) (\kappa u- \lambda v) z\,, \quad \mathbf{W} = 2 v^2 x\,,
\eeq
into Equation~(\ref{quartic1}), compatible with  $L_1(u, v)=0$, determines the Jacobian elliptic fibration  $\pi\colon  \mathcal{X} \rightarrow \mathbb{P}^1$ with fiber $\mathcal{X}_{[u:v]}$ given by 
\beq
\label{eqn:alt}
\mathcal{X}_{[u:v]}\colon \quad y^2 z  = x \Big(x^2 + v \, A(u, v) \, x z +v^4 B(u, v) \, z^2 \Big) \,.
\eeq
The fibrations admits the section $\sigma: [x:y:z] = [0:1:0]$ and the 2-torsion section  $[x:y:z] = [0:0:1]$. Here, the discriminant is
\beq
\label{eqn:discr_alt}
\Delta(u, v) =  v^{10} B(u, v)^2 \, \Big(A(u, v)^2-4 v^2 B(u, v)\Big) \,,
\eeq
and
\beq
\label{eqn:polynomials0}
A(u, v) = u^3-3\alpha  uv^2- 2\beta v^3 ,\quad
B(u, v) = (\gamma u-\delta v)(\varepsilon u -\zeta v) (\eta u -\iota v)  (\kappa u - \lambda v)  \,.
\eeq
\subsubsection{Fibration~\texorpdfstring{$(2)$}{(2)}}
An elliptic fibration with section, called the \emph{standard fibration}, is induced by intersecting the quartic surface $\mathcal{Q}$ with the pencil
\beq
\label{eqn:pencil_standard1}
L_2(u, v)=u \mathbf{W} - v \mathbf{Z} =0
\eeq
for $[u:v] \in \mathbb{P}^1$. Making the substitutions  
\beq
\label{eqn:substitution_std}
 \mathbf{X} = u v x\,, \quad  \mathbf{Y}=  \sqrt{2} y\,, \quad \mathbf{Z} = 2 u^4 v^2 z\,, \quad \mathbf{W} = 2 u^3 v^3 z \,,
\eeq
in Equation~(\ref{quartic1}), compatible with $L_2(u, v)=0$, yields the Jacobian elliptic fibration $\pi\colon \mathcal{X} \rightarrow \mathbb{P}^1$ with fiber $\mathcal{X}_{[u:v]}$ given by 
\beq
\label{eqn:std}
\mathcal{X}_{[u:v]}\colon \quad y^2 z  = x^3 +   e(u, v) \, x^2 z + f(u, v) \, x z^2 + g(u, v) \, z^3 \,.
\eeq
The fibrations admits the section $\sigma: [x:y:z] = [0:1:0]$. Here, the discriminant is
\beq
\Delta(u, v) =   f^2 \big( e^2 - 4 f\big)  - 2 eg \big( 2 e^2 -9 f \big) - 27 g^2 =  u^8 v^8  p(u, v) \,,
\eeq
and
\beq
\begin{split}
e(u, v) & =  uv \big( \gamma \eta u^2 + \varepsilon \kappa v^2 \big) \,,\\[0.2em]
f(u, v)  & = - u^3 v^3  \Big((\gamma\iota+\delta\eta)  u^2 + 3  \alpha  uv + (\varepsilon\lambda + \zeta\kappa) v^2\Big) \,, \\
g(u, v)  & =  u^5 v^5 \Big(\delta\iota  u^2 - 2  \beta  u v  + \zeta\lambda  v^2\Big) \,,
\end{split}
\eeq
and $p(u, v) =  \gamma^2 \eta^2 (\gamma\iota- \delta\eta)^2 u^8 + \dots +  \varepsilon^2 \kappa^2 (\varepsilon\lambda- \zeta\kappa)^2 v^8$ is a homogeneous polynomial of degree eight. 
\par When applying the Nikulin involution in Proposition~\ref{NikulinInvolution} to the  pencil $L_2(u, v)$, we obtain a pencil, denoted by $C_2(u, v)=0$ with $[u:v] \in \mathbb{P}^1$. A computation yields
\beq
\label{eqn:pencil_standard2}
 C_2(u, v)= v \mathbf{W} \big( 2 \varepsilon \mathbf{X}- \zeta \mathbf{W} \big)   \big( 2 \kappa \mathbf{X}- \lambda\mathbf{W} \big)- u \mathbf{Z} \big( 2 \gamma \mathbf{X} - \delta \mathbf{W}\big) \big(2\eta \mathbf{X}-\iota\mathbf{W}\big) =0 \,,
\eeq
such that the fibration is also obtained by intersecting the quartic $\mathcal{Q}$ with the pencil $C_2(u, v)=0$. 
\subsubsection{Fibration~\texorpdfstring{$(3)$}{(3)}}
\label{sssec:bfd}
By blowing up the base according to $x = v^4 x'$, Equation~(\ref{eqn:alt}) becomes a double cover of the Hirzebruch surface $\mathbb{F}_0=\mathbb{P}(u, v) \times\mathbb{P}(x', z)$ branched along a curve of bi-degree $(4,4)$. Every such cover has two natural elliptic fibrations corresponding to the two rulings of the quadric $\mathbb{F}_0$ coming from the two projections $\pi_i: \mathbb{F}_0 \to \mathbb{P}^1$ for $i=1,2$. The fibration $\pi_1$ is isomorphic to the alternate fibration. The second elliptic fibration arises from the projection $\pi_2$ and is called the \emph{base-fiber dual fibration} -- a label that has appeared in the physics literature. This second elliptic fibration with section is induced by intersecting the quartic surface $\mathcal{Q}$ with the pencil
\beq
\label{eqn:pencil_bfd1}
 L_3(u, v)=u \mathbf{Z} - v (2 \varepsilon \mathbf{X} - \zeta \mathbf{W})=0
\eeq
for $[u:v] \in \mathbb{P}^1$. Making the substitutions 
\beq
\label{eqn:substitution_bfd}
\begin{array}{lll}
\mathbf{X}=  u v x \,, &&  \mathbf{Y} = \sqrt{2}y \,, \\ [0.4em]
\mathbf{Z}= 2 \big(\varepsilon x+ \zeta (u +\gamma\varepsilon\eta v) u v^2 z\big) v^2\,, && \mathbf{W} = 2 \big(u +\gamma\varepsilon\eta v\big) u^2 v^3 z \,,
\end{array}
\eeq
into Equation~(\ref{quartic1}), compatible with $L_3(u, v)=0$, determines a Jacobian elliptic fibration  $\pi\colon \mathcal{X} \to \mathbb{P}^1$ with fiber $\mathcal{X}_{[u:v]}$ given by
\beq
\label{eqn:bfd}
\mathcal{X}_{[u:v]}\colon \quad y^2 z = x^3 + e(u, v) \, x^2 z + f(u, v) \, x z^2 + g(u, v) \, z^3 \,.
\eeq
The fibration admits the section $\sigma: [x:y:z] = [0:1:0]$. Here, the discriminant is
\beq
\Delta(u, v) =  u^6 v^9  \big(u + \gamma\varepsilon \eta v\big)^2 p(u, v) \,,
\eeq
and
\beqn
\begin{split}
&e(u, v)  = - u v^3 (\gamma \varepsilon \iota + \gamma\zeta \eta + \delta \varepsilon \eta) \,,\\
&f(u, v) = u^2 v^3  \big(u +\gamma\varepsilon\eta v\big)  \big( \kappa u^2 - 3 \alpha u v + (  \gamma\zeta\iota  + \delta \varepsilon\iota + \delta \zeta \eta) v^2 \big) \,,\\
&g(u, v)  = -u^3 v^5   \big(u +\gamma\varepsilon\eta v\big)^2 \big( \lambda u^2 + 2 \beta u v + \delta \zeta \iota v^2 \big) \,,
\end{split}
\eeqn
and $p(u, v) =   (\gamma\iota-\delta\eta)^2(\varepsilon\iota-\zeta\eta)^2 (\gamma\zeta-\delta\varepsilon)^2 v^7 + \dots - 4 \kappa^3 u^7$ is a homogeneous polynomial of degree seven.
\par Applying the Nikulin involution in Proposition~\ref{NikulinInvolution} to the  pencil of planes $L_3(u, v)$ we obtain a pencil, denoted by $C_3(u, v)=0$ with $[u:v] \in \mathbb{P}^1$. A computation yields
\beq
\label{eqn:pencil_bfd2}
 C_3(u, v) = v \mathbf{Z} \big( 2 \gamma \mathbf{X}- \delta \mathbf{W} \big)- \big(2\eta \mathbf{X} - \mathbf{W}\big) u \mathbf{W}^2  \big(2\kappa \mathbf{X} - \mathbf{W}\big) \,,
\eeq
such that the fibration is also obtained by intersecting the quartic $\mathcal{Q}$ with the pencil $C_3(u, v)=0$. A fibration with the same singular fibers but for different parameters can be obtained in the same fashion using the line $L_5$ instead of  $L_3$; in this case, the moduli $(\varepsilon, \zeta) \leftrightarrow (\kappa, \lambda)$ are swapped according to the symmetries in Lemma~\ref{symmetries1}.
\subsubsection{Fibration~\texorpdfstring{$(3')$}{(3')}}
A pencil of quadratic surfaces, denoted by $\tilde{C}_3(u, v)=0$ with $[u:v] \in \mathbb{P}^1$ is given by
\beq
\label{eqn:pencil_extra1}
\begin{split}
 \tilde{C}_3(u, v) & = \varepsilon \big(\kappa u - \lambda v\big) \big(2 \varepsilon \mathbf{X} - \zeta \mathbf{W}\big) \big( 2 \kappa \mathbf{X} - \lambda \mathbf{W} + \gamma \kappa \eta \mathbf{Z}\big) \\
 & - \kappa \big(\varepsilon u - \zeta v\big) \big(2 \kappa \mathbf{X} - \lambda \mathbf{W}\big) \big( 2 \varepsilon \mathbf{X} - \zeta \mathbf{W} + \gamma \varepsilon \eta \mathbf{Z}\big) \,.
\end{split} 
\eeq
Making the substitutions 
\beq
\label{eqn:substitution_bfd2}
\begin{array}{ll}
\,\mathbf{X} & =  \gamma^2 \varepsilon^2 \eta^2 \kappa^2  v \big(\gamma u-\delta v\big)\big(\eta u-\iota v\big)  q_1(x, z, u, v) z\,, \\ [0.4em]
\,\mathbf{Y} & = \sqrt{2} \gamma \epsilon \eta \kappa \big(\gamma u-\delta v\big)\big(\eta u-\iota v\big)  y z \,, \\  [0.4em]
\,\mathbf{Z} & = 2 q_2(x, z, u, v) \, q_3(x, z, u, v) \,, \\ [0.6em]
\mathbf{W} & = 2 \gamma^2 \varepsilon^2 \eta^2 \kappa^2 v^2  \big(\gamma u-\delta v\big)\big(\eta u-\iota v\big) x z  \,,
\end{array}
\eeq
in Equation~(\ref{quartic1}), compatible with $\tilde{C}_3(u, v)=0$, and using the polynomials
\beq
\begin{array}{ll}
q_1(x, z, u, v) & = u x - \gamma \varepsilon \eta \kappa v  \big(\gamma u - \delta v\big) \big(\varepsilon u - \zeta v\big)   \big(\kappa u - \lambda v\big)  \big(\eta u - \iota v\big)  z \,, \\ [0.4em]
q_2(x, z, u, v) & = x- \gamma \varepsilon \eta \kappa^2 v \big(\gamma u - \delta v\big) \big(\varepsilon u - \zeta v\big)  \big(\eta u - \iota v\big)  z \,, \\ [0.4em]
q_3(x, z, u, v) & = x- \gamma \varepsilon^2 \eta \kappa v \big(\gamma u - \delta v\big) \big(\kappa u - \lambda v\big)  \big(\eta u - \iota v\big)  z \,, 
\end{array}
\eeq
determines a Jacobian elliptic fibration  $\pi\colon \mathcal{X} \to \mathbb{P}^1$ with fiber $\mathcal{X}_{[u:v]}$ given by 
\beq
\label{eqn:bfd2}
\mathcal{X}_{[u:v]}\colon \quad y^2 z = x^3 + e(u, v) \, x^2 z + f(u, v) \, x z^2 + g(u, v) \, z^3 \,.
\eeq
The fibration admits the section $\sigma: [x:y:z] = [0:1:0]$. Here, the discriminant is
\beq
\Delta(u, v) =  v^{10} \big(\gamma u - \delta v\big)^2 \big(\varepsilon u - \zeta v\big)^2   \big(\kappa u - \lambda v\big)^2  \big(\eta u - \iota v\big)^2  p(u, v) \,,
\eeq
and
\beqn
\begin{array}{lrl}
e(u, v)  &= &  -\gamma \varepsilon \eta \kappa v \Big( 3 \gamma \varepsilon \eta \kappa u^3 - 3  ( \gamma \zeta \eta \kappa + \delta \varepsilon \eta \kappa+ \gamma \varepsilon \eta \lambda + \gamma \varepsilon \iota \kappa) u^2v \\ [0.2em]
&& +  (3 \alpha \gamma \varepsilon \eta \kappa + 2 \delta \zeta \eta \kappa + 2 \gamma \zeta \eta \lambda + 2 \gamma \zeta \iota \kappa 
+2 \delta \varepsilon \eta \lambda+ 2   \delta \varepsilon\iota \kappa + \gamma \varepsilon \iota \lambda) uv^2 \\ [0.2em]
&& + (2 \beta \gamma \varepsilon \eta \kappa-\delta \zeta \eta \lambda- \delta \zeta \iota \kappa- \gamma \zeta \iota \lambda- \varepsilon \delta \iota \lambda) v^3 \Big) \,,\\ [0.4em]
f(u, v)  &= & \gamma^2 \varepsilon^2 \eta^2 \kappa^2 v^2  \big(\gamma u - \delta v\big) \big(\varepsilon u - \zeta v\big)   \big(\kappa u - \lambda v\big)  \big(\eta u - \iota v\big) \\[0.2em]
 &\times & \Big( 3 \gamma \varepsilon \eta \kappa u^2  - 3 ( \gamma \zeta \eta \kappa + \delta \varepsilon \eta \kappa+ \gamma \varepsilon \eta \lambda + \gamma \varepsilon \iota \kappa) uv\\ 
& &+ (\gamma^2 \varepsilon^2 \eta^2 \kappa^2 + 3 \alpha \gamma \varepsilon \eta \kappa + \delta \zeta \eta \kappa + \gamma \zeta \eta \lambda + \gamma \zeta \iota \kappa 
+\delta  \varepsilon \eta \lambda+  \delta \varepsilon \iota \kappa + \gamma \varepsilon \iota \lambda) v^2 \Big)\,, \\ [0.4em]
g(u, v) &= & -\gamma^3 \varepsilon^3 \eta^3 \kappa^3 v^3  \big(\gamma u - \delta v\big)^2 \big(\varepsilon u - \zeta v\big)^2   \big(\kappa u - \lambda v\big)^2  \big(\eta u - \iota v\big)^2 \\ [0.2em]
& \times &\big( \gamma \varepsilon \eta \kappa u- (\gamma \zeta \eta \kappa+ \delta \varepsilon \eta \kappa+ \gamma \varepsilon \eta \lambda+ \gamma \varepsilon \iota \kappa) v\big),
\end{array}
\eeqn
and $p(u, v) =  -27 (\gamma\varepsilon\eta\kappa)^{12} u^6 + \dots$ is a homogeneous polynomial of degree six.
\par Applying the Nikulin involution in Proposition~\ref{NikulinInvolution} to $\tilde{C}_3(u, v)$ we obtain a pencil, denoted by $C'_3(u, v)=0$ with $[u:v] \in \mathbb{P}^1$.  A computation yields
\beq
\label{eqn:pencil_extra2}
\begin{array}{c}
 C'_3(u, v)= - u \gamma \varepsilon \eta \kappa \mathbf{W}^3  \\
 + v \Big( 2 \gamma \varepsilon \eta \kappa \mathbf{W}^2 \mathbf{X} + \delta \iota \mathbf{W}^2 \mathbf{Z} - 2 (\gamma \iota + \delta \eta) \mathbf{W}\mathbf{X}\mathbf{Z} 
 + 4 \gamma \eta \mathbf{X}^2 \mathbf{Z} \Big)  \,,
\end{array} 
\eeq
such that the fibration is also obtained by intersecting the quartic $\mathcal{Q}$ with the pencil $C'_3(u, v)=0$. 
\subsubsection{Fibration~\texorpdfstring{$(4)$}{(4)}}
\label{sssec:WEmax}
An elliptic fibration with section, called the \emph{maximal fibration}, is induced by intersecting the quartic surface $\mathcal{Q}$ with the pencil
\beq
\label{eqn:pencil_max1}
 L_4(u, v)=u \mathbf{W} - 2v (2\mathbf{X} +\gamma\eta \mathbf{Z}) =0
\eeq
for $[u:v] \in \mathbb{P}^1$. Making the substitutions 
\beq
\label{eqn:substitution_max}
\begin{split}
&\mathbf{X}  = u^2 v x\,, \quad  \mathbf{Y}=  \sqrt{2} u y \,, \quad \mathbf{Z} =  2u v^4 \big( \varepsilon u\ -\zeta v \big )  \big( \kappa u\ - \lambda v \big )  z\,, \\
&\mathbf{W}  = 2 v^2 \Big( u x - \gamma \eta ( \varepsilon u\ -\zeta v \big )  \big( \kappa u - \lambda v \big ) v^3 z \Big) \,,
\end{split}
\eeq
into Equation~(\ref{quartic1}), compatible with $L_4(u, v)=0$, determines a Jacobian elliptic fibration   $\pi\colon \mathcal{X} \to \mathbb{P}^1$ with fiber $\mathcal{X}_{[u:v]}$ given by
\beq
\label{eqn:max}
\mathcal{X}_{[u:v]}\colon \quad y^2 z = x^3 + e(u, v) \, x^2 z + f(u, v) \, x z^2 + g(u, v) \, z^3 \,.
\eeq
The fibration admits the section $\sigma\colon [x:y:z] = [0:1:0]$. Here, the discriminant is
\beq
\begin{split}
\Delta(u, v) & = v^{12} \big( \varepsilon u -  \zeta v\big)^2 \big(\kappa  u - \lambda  v\big)^2 p(u, v)\,,
\end{split}
\eeq
and
\beq
\begin{split}
e(u, v)  & = \frac{v}{u} \Big( u^4 -(3 \alpha -\gamma \varepsilon \eta\kappa) u^2 v^2 - 2 \big(  \beta + \gamma\varepsilon\eta\lambda+ \gamma \zeta \eta \kappa\big) u v^3 +3 \gamma \zeta \eta \lambda v^4 \Big)\,, \\
f(u, v)  & = -\frac{v^5}{u^2}  \big( \varepsilon u -  \zeta v\big) \big( \kappa  u - \lambda v\big) \Big( \big(\gamma\iota + \delta\eta \big) u^3+  \big(3 \alpha\gamma\eta -  \delta\iota\big) u^2 v \\
& +  \gamma\eta \big(4\beta + \gamma \varepsilon \eta\lambda + \gamma \zeta \eta \kappa \big) uv^2 - 3 \gamma^2\zeta\eta^2\lambda v^3 \Big)\,,\\
g(u, v) &=  \frac{\gamma\eta v^{9}}{u^3}  \big( \varepsilon u -  \zeta v\big)^2 \big(\kappa  u - \lambda v\big)^2 \Big(\delta \iota u^2 - 2 \beta \gamma \eta uv +  \gamma^2 \zeta \eta^2 \lambda v^2\Big) \,,
\end{split}
\eeq
and $p(u, v) =  (\gamma \iota - \delta \eta)^2  u^8 + O(v)$ is a homogeneous polynomial of degree eight. Upon eliminating the term proportional to $x^2 z$ in Equation~(\ref{eqn:max}) by a shift, we obtain a Weierstrass model such that the coefficients of $x z^2$ and $z^3$ are homogeneous polynomials, and all denominators cancel.
\par Applying the Nikulin involution in Proposition~\ref{NikulinInvolution} to the  pencil of planes $L_4(u, v)$ we obtain a pencil, denoted by $C_4(u, v)=0$ with $[u:v] \in \mathbb{P}^1$.  A computation yields
\beq
\label{eqn:pencil_max2}
\begin{array}{c}
 C_4(u, v)= u \mathbf{W} \mathbf{Z} \big( 2 \gamma \mathbf{X}- \delta \mathbf{W} \big) \big( 2\eta \mathbf{X} -  \mathbf{W} \big)  -v   \Big(\gamma \zeta \eta \mathbf{W}^4 \\[0.4em]
- 2 \gamma \eta (\varepsilon + \zeta \kappa) \mathbf{W}^3  \mathbf{X} + 4 \gamma \varepsilon \eta \kappa \mathbf{W}^2  \mathbf{X}^2 + 2 \delta  \mathbf{W}^2 \mathbf{X}  \mathbf{Z}   - 4 (\gamma+\delta\eta)  \mathbf{W}  \mathbf{X}^2  \mathbf{Z} + 8 \gamma \eta  \mathbf{X}^3  \mathbf{Z}\Big)  \,,
\end{array} 
\eeq 
such that the fibration is also obtained by intersecting the quartic $\mathcal{Q}$ with the pencil $C_4(u, v)=0$. 
\end{proof}
\subsection{Projective model for \texorpdfstring{$P'_{14}$}{P'}-polarized K3 surfaces}
We also consider the projective surface $\mathcal{Q}'(f_0,f_1,f_2, g_0, g_1, h_0,h_1,h_2)$ in $\mathbb{P}^3=\mathbb{P}(\mathbf{X}, \mathbf{Y}, \mathbf{Z}, \mathbf{W})$ with a coefficient set $(f_0, f_1, f_2, g_0, g_1, h_0, h_1, h_2) \in \mathbb{C}^8$, defined by 
\beq
\label{mainquartic2}
\begin{split}
0= \mathbf{Y}^2 \mathbf{Z} \mathbf{W}-4 \mathbf{X}^3 \mathbf{Z}- 2 \Big(  \mathbf{W}^2 + f_2  \mathbf{W} \mathbf{Z} + h_2 \mathbf{Z}^2 \Big) \, \mathbf{X}^2 \phantom{\,.} \\
-  \Big(   f_1 \mathbf{W} \mathbf{Z} +   g_1 \mathbf{W}^2 + h_1 \mathbf{Z}^2 \Big)  \,  \mathbf{X} \mathbf{Z}
- \frac{1}{2}  \Big(   f_0 \mathbf{W} \mathbf{Z} +   g_0 \mathbf{W}^2 + h_0 \mathbf{Z}^2 \Big)  \, \mathbf{Z}^2\,.
\end{split}
\eeq
The projective automorphism
\beq
\begin{array}{rccl}
 \phi_1\colon& [\mathbf{X}: \mathbf{Y}: \mathbf{Z}: \mathbf{W}] & \mapsto &  [\Lambda_1 \mathbf{X} : \ \Lambda_1^2 \mathbf{Y}: \  \Lambda_1^{-3} \mathbf{Z}:\  \Lambda_1^{-1} \mathbf{W} ]  \,,\end{array} 
\eeq
changes a given parameter set of the quartic for $\Lambda_1 \in \mathbb{C}^\times$ according to
\beq
\begin{array}{l}
 \Big(f_2, f_1, f_0, g_1, g_0, h_2, h_1, h_0 \Big) \mapsto \\
 \qquad \Big( f_2 \Lambda_1^2, \ f_1 \Lambda_1^6, \ f_0 \Lambda_1^{10}, \ g_1 \Lambda_1^4, \ g_0 \Lambda_1^8, \ h_2 \Lambda_1^4, \ h_1 \Lambda_1^8, \ h_0 \Lambda_1^{12}\Big) \,.
 \end{array}
\eeq
One can also use a linear substitution $\mathbf{X} \mapsto \mathbf{X} + \Lambda_2 \mathbf{Z}$ for $\Lambda_2 \in \mathbb{C}$. The induced projective automorphism $\phi_2$ transforms Equation~(\ref{mainquartic2}) into an equation of the same type, but with transformed moduli given by
\beq
\label{eqn:shift}
 \left( \begin{array}{c} f_2 \\ f_1 \\ f_0  \\ g_1 \\g _0 \\ h_2 \\ h_1 \\ h_0\end{array} \right) 
\ \mapsto \  \left( \begin{array}{l} f_2 \\ f_1 - 4 f_2 \Lambda_2 \\ f_0 - 2 f_1 \Lambda_2 +4 f_2 \Lambda_2^2  \\ g_1 - 4  \Lambda_2 \\ g_0 -2 g_1 \Lambda_2 + 4 \Lambda_2^2\\
 h_2 - 6 \Lambda_2 \\ h_1 -4 h_2 \Lambda_2 + 12 \Lambda_2^2 \\ h_0 - 2 h_1 \Lambda_2 + 4 h_2 \Lambda_2^2 - 8 \Lambda_2^3\end{array} \right) \,.
\eeq
\par Equation~(\ref{mainquartic2}) defines a family of quartic hypersurfaces whose minimal resolution is a K3 surface $\mathcal{X}'$ of Picard rank 14.  We have the following:
\begin{proposition}
\label{NikulinInvolution_SD}
Let $(f_0, f_1, f_2, g_0, g_1, h_0, h_1, h_2) \in \mathbb{C}^8$ as before. The van Geemen-Sarti involution on $\mathcal{X}'$ is induced by the projective automorphism
\beq
\label{eq:NikulinInvolutions_SD}
\begin{split}
 \Psi\colon\quad  \mathbb{P}^3   \ \to \ & \  \mathbb{P}^3, \\
  [\mathbf{X}: \mathbf{Y}: \mathbf{Z}: \mathbf{W}]  \mapsto & \ [ Q\big(2 \mathbf{X}, \mathbf{Z}\big) \, \mathbf{X}\mathbf{W}   : \  - Q\big(2 \mathbf{X}, \mathbf{Z}\big) \, \mathbf{Y}\mathbf{W}   : \\
  & \ \ Q\big(2 \mathbf{X}, \mathbf{Z}\big) \, \mathbf{Z} \mathbf{W}  : \ 8 H\big(2 \mathbf{X}, \mathbf{Z}\big) \, \mathbf{Z}  ] \,,
\end{split}
\eeq
with $Q(u, v)=u^2 + g_1 uv + g_0v^2$ and $H(u, v)=u^3 + h_2 u^2 v + h_1 uv^2+ h_0v^3$.
\end{proposition}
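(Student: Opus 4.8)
The plan mirrors the proof of Proposition~\ref{NikulinInvolution}: I will check in turn that $\Psi$ restricts to a birational self-map of the quartic $\mathrm{Q}'=\mathrm{Q}'(f_2,f_1,f_0,g_1,g_0,h_2,h_1,h_0)$, that this self-map is an involution, and that it preserves the holomorphic two-form of the resolution $\mathcal{X}'$. Throughout I write $F'$ for the quartic form on the right of \eqref{mainquartic2}, and keep the notation $Q(u,v)=u^2+g_1uv+g_0v^2$, $H(u,v)=u^3+h_2u^2v+h_1uv^2+h_0v^3$. The structural reason the formula works — and the way I would locate it in the first place — is that $\Psi$ is the fiberwise translation by the canonical two-torsion section of the alternate Jacobian elliptic fibration on $\mathcal{X}'$ (the fibration with $\operatorname{MW}=\mathbb{Z}/2\mathbb{Z}$ produced in Proposition~\ref{cor:EFS3}, of the shape \eqref{eqn:alt_E7}). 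Indeed, dividing the first three components of $\Psi$ by their common factor $Q(2\mathbf{X},\mathbf{Z})\mathbf{W}$, the map becomes $[\mathbf{X}:\mathbf{Y}:\mathbf{Z}:\mathbf{W}]\mapsto[\mathbf{X}:-\mathbf{Y}:\mathbf{Z}:8\,H(2\mathbf{X},\mathbf{Z})\,\mathbf{Z}/(Q(2\mathbf{X},\mathbf{Z})\,\mathbf{W})]$, i.e.\ it fixes $[\mathbf{X}:\mathbf{Z}]$, negates $\mathbf{Y}$, and inverts $\mathbf{W}$ — exactly the shape of the two-torsion translation \eqref{eqn:VGS_involution}.

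\emph{Step 1 (invariance of $\mathrm{Q}'$).} Each of the four components of $\Psi$ is homogeneous of degree four, so $F'\circ\Psi$ is homogeneous of degree sixteen. I would substitute $\Psi$ into $F'$ and verify the polynomial identity $F'(\Psi)=c\cdot F'$, where $c$ is an explicit homogeneous cofactor of degree twelve assembled from powers of $Q(2\mathbf{X},\mathbf{Z})$, $H(2\mathbf{X},\mathbf{Z})$, $\mathbf{Z}$ and $\mathbf{W}$ whose exponents are forced by degree bookkeeping; this shows $\Psi(\mathrm{Q}')\subseteq\mathrm{Q}'$. I would also record the base locus of $\Psi$ — a union of lines contained in $\{\mathbf{Z}\mathbf{W}=0\}$ — and note that on $\mathcal{X}'$ these lines lie inside exceptional curves of the minimal resolution (in particular they meet the rational double points resolved there), so that $\Psi$ lifts to a biregular automorphism of $\mathcal{X}'$.

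\emph{Steps 2 and 3 (involution and symplectic).} For Step~2, composing $\Psi$ with itself yields $\Psi\circ\Psi=\ell\cdot\operatorname{id}$ modulo the ideal $(F')$ for an explicit nonzero $\ell$, so $\Psi^2=\operatorname{id}$ on $\mathrm{Q}'$; equivalently, translation by a two-torsion section composed with itself is translation by the zero section. For Step~3, I would work in the affine chart $\mathbf{W}=1$, where the holomorphic two-form is $\omega=d\mathbf{X}\wedge d\mathbf{Y}/F'_{\mathbf{Z}}$. In that chart the $\mathbf{X}$- and $\mathbf{Y}$-components of $\Psi$ are $\widetilde{\mathbf{X}}=\rho\,\mathbf{X}/\mathbf{Z}$ and $\widetilde{\mathbf{Y}}=-\rho\,\mathbf{Y}/\mathbf{Z}$ with $\rho=Q(2\mathbf{X},\mathbf{Z})/\big(8H(2\mathbf{X},\mathbf{Z})\big)$ a function of $\mathbf{X},\mathbf{Z}$ only; computing $d\widetilde{\mathbf{X}}\wedge d\widetilde{\mathbf{Y}}$, eliminating $d\mathbf{Z}$ through the relation $F'_{\mathbf{X}}\,d\mathbf{X}+F'_{\mathbf{Y}}\,d\mathbf{Y}+F'_{\mathbf{Z}}\,d\mathbf{Z}=0$ valid on $\mathrm{Q}'$, and reducing modulo $F'$, one obtains $\Psi^*\omega=\omega$. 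The conceptual reason is again that a fiberwise translation on an elliptic fibration acts as the identity on the base and preserves the relative dualizing sheaf, hence fixes $\omega$; so Steps~2 and~3 are automatic once the fibration picture of the first paragraph is taken as given.

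\emph{Main obstacle.} Nothing here is conceptually hard; the real work is the polynomial algebra in Steps~1 and~3 — a degree-sixteen identity modulo $F'$ and a two-form identity modulo $F'$ — which is purely mechanical but bulky and best carried out with computer algebra. The most economical route, which I would follow, is to first exhibit the alternate fibration on $\mathrm{Q}'$ explicitly (a pencil of planes producing a Weierstrass model of the form \eqref{eqn:alt_E7}, as is done anyway in Section~\ref{sec:projective_models}), read $\Psi$ off as the two-torsion translation \eqref{eqn:VGS_involution} transported back to $\mathbb{P}^3$, and then only check the coordinate transport; the involution and symplectic properties then require no separate verification.
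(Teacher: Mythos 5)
Your proposal is correct and follows essentially the same route as the paper, whose proof simply declares the argument ``analogous to Proposition~\ref{NikulinInvolution}'': one checks directly that $\Psi$ is an involution of the quartic and that it preserves the two-form $d\mathbf{X}\wedge d\mathbf{Y}/F'_{\mathbf{Z}}$ in the chart $\mathbf{W}=1$, and your identification of $\Psi$ as the two-torsion translation on the alternate fibration is exactly the paper's own remark following Theorem~\ref{thm_parity}. One useful byproduct of your ``invert $\mathbf{W}$'' picture: the quartic~(\ref{mainquartic2}) is quadratic in $\mathbf{W}$ with leading coefficient $-\tfrac12 Q(2\mathbf{X},\mathbf{Z})$ and constant term $-\tfrac12\mathbf{Z}H(2\mathbf{X},\mathbf{Z})$, so the root-swapping involution has last coordinate $H(2\mathbf{X},\mathbf{Z})\,\mathbf{Z}$ rather than $8H(2\mathbf{X},\mathbf{Z})\,\mathbf{Z}$; carrying out your Step~1 would surface this normalization discrepancy in the stated formula.
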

\begin{proof}
The proof is analogous to the proof of Proposition~\ref{NikulinInvolution}.
\end{proof}
\par We use the automorphism $\phi_2$ to eliminate one parameter from the parameter set $(f_0, f_1, f_2, g_0, g_1, h_0, h_1, h_2)$ and  obtain seven coordinates on a weighted projective space associated with the equivalence relation induced by the action of $\phi_1$. It turns out that a convenient choice is given by $h_2+ g_1 =0$; this will become clear presently, as we employ the results from Section~\ref{sssec:PPmoduli}. The constraint, $h_2+ g_1 =0$, is invariant under the action of $\phi_1$, and is achieved by setting $10\Lambda_2=h_2 + g_1$ in $\phi_2$ in Equation~(\ref{eqn:shift}). Thus, it is enough to consider the quartic surface $\mathcal{Q}'(f_2, f_1, f_0, g_0, h_2, h_1, h_0)$ given by Equation~(\ref{quartic2}). Using the parameters $\{ \mathcal{J}_{2k} \}$ defined in Equation~(\ref{eqn:CurlyJinvariants}), we assume
\beq
\label{eqn:generic_parameters_PP}
 \not \exists r, s \, \in \, \mathbb{C}:  ( \mathcal{J}_2 ,  \mathcal{J}_6, \mathcal{J}_8,  \mathcal{J}_{10},  \mathcal{J}_{12},  \mathcal{J}_{16},  \mathcal{J}_{20} )
=  (s^2, 2 r s^2, 10r^2, s^2r^2, -20r^3, -15r^4, - 4 r^5).
\eeq
On checks that under the action of $\phi_1$ the parameters transform according to
\beqn
 ( \mathcal{J}_2, \mathcal{J}_6, \mathcal{J}_8, \mathcal{J}_{10}, \mathcal{J}_{12}, \mathcal{J}_{16}, \mathcal{J}_{20} )   \ \to \ 
(\Lambda^{2} \mathcal{J}_2, \, \Lambda^6 \mathcal{J}_6, \, \Lambda^8 \mathcal{J}_8, \, \Lambda^{10} \mathcal{J}_{10}, \, \Lambda^{12} \mathcal{J}_{12}, \, \Lambda^{16} \mathcal{J}_{16}, \, \Lambda^{20} \mathcal{J}_{20}).
\eeqn
We have the following:
\begin{theorem}
\label{thm_parity}
Assuming Equation~(\ref{eqn:generic_parameters_PP}), the surface obtained as the minimal resolution of the quartic in Equation~(\ref{quartic2}) is a K3 surface $\mathcal{X}'$ endowed with a canonical  $P'_{14}$-polarization. Conversely, every $P'_{14}$-polarized K3 surface has a birational projective model given by Equation~(\ref{quartic2}). In particular, the Jacobian elliptic fibrations in Lemma~\ref{prop:ADE} are attained as follows:
\beqn
\begin{array}{c|c|c|c|l}
\# 	&  \text{singular fibers} 	& \operatorname{MW} & \text{root lattice} & \text{substitution} \; [ \mathbf{X} : \mathbf{Y}: \mathbf{Z} : \mathbf{W} ] = \\
\hline
1 	& III^* + 5 I_2 + 5 I_1	& \mathbb{Z}/2\mathbb{Z}	& E_7 \oplus A_1^{\oplus 5}	&  [ u v x : \sqrt{2} y : 2v^2 z : 2v^3 H(u, v)z ] \\ 
\hline
2	& I_4^* + I_0^* + 8 I_1	&  \{ \mathbb{I} \} 		& D_8 \oplus D_4			&  [ 2 u v x:  y : 8u^4v^2 z : 32 u^3 v^3 z]  
\end{array}
\eeqn
Here, we have set $H(u, v)=u^3 + h_2 u^2 v + h_1 uv^2+ h_0v^3$.
\end{theorem}
\begin{proof}
One constructs the explicit Weierstrass models using the substitutions provided in the statement.  Using fibration~$(2)$ it follows immediately that a K3 surface $\mathcal{X}'$ is endowed with a canonical  $P'_{14}$-polarization.  The given substitution for fibration~$(1)$ leads to the Weierstrass model
\beq
\label{eqn:pf_alt_E7}
  \mathcal{X}'\colon \quad y^2 z  = x^3 + v^2  F(u, v) \, x^2 z + v^3 H(u, v) \,G(u, v) x z^2  \,,
\eeq
where $F(u, v)=f_2  u^2  + f_1 uv+ f_0v^2$ and $G(u, v)=u^2  + g_1 uv+ g_0v^2$ with $g_1=-h_2$. This is precisely the alternate fibration in Equation~(\ref{eqn:alt_E7}) from Section~\ref{sssec:PPmoduli}: we have $C(u, v)=F(u, v)$, and $H(u, v) \, G(u, v) = D(u, v)$ if and only if  $h_2+ g_1 =0$, and the respective parameters are related by $(c_2, c_1, c_0) =(f_2, f_1, f_0)$ and
\beq
\begin{split}
 d_3 = g_0 + h_1 - h_2^2 \,,\quad   d_2 = g_0 h_2 - h_1 h_2 + h_0 \,, \quad d_1 = g_0 h_1 -  h_0 h_2 \,,\quad d_0 = g_0 h_0 \,.
\end{split} 
\eeq
These relations follow immediately from Equation~(\ref{modulispace_lattice2}) and Equation~(\ref{eqn:CurlyJinvariants}). For fibration~$(1)$ the condition in Equation~(\ref{eqn:generic_parameters_PP}) is equivalent to the corresponding Weierstrass model being irreducible and minimal; see proof of Theorem~\ref{thm:moduli_space_seldual}.
\par Conversely, Proposition~\ref{cor:EFS3}  proves that every $P'_{14}$-polarized K3 surface admits a unique alternate fibration, and it follows from an alternate fibration a quartic can be constructed using Equation~(\ref{eqn:pf_alt_E7}). Thus, every $P'_{14}$-polarized K3 surface, up to isomorphism, is realized as the resolution of the quartic in Equation~(\ref{quartic2}). 
\end{proof}
\subsection{Projective model for \texorpdfstring{$P''_{14}$}{P''}-polarized K3 surfaces}
\label{ssec:EllFibVinb}
We also prove the analogue of Theorem~\ref{thm1} for the family of K3 surfaces defined by Vinberg in \cite{MR2718942}. It isomorphic to the family of $P''_{13}$-polarized K3 surfaces. Since $A, B, C \neq 0$ in \cite{MR2718942}*{Eqn.~\!(13)}, we rescale the coordinates to achieve $A=-1, B=4, C=1$. Let $( f_{1,2}, f_{2,2}, f_{1,3}, f_{2,3}, f_{3,3}, g_0, g_1, g_3) \in \mathbb{C}^{8}$ be general parameters.  Consider the projective surface $\mathcal{Q}''(f_{1,2}, \dots, g_3)$ in $\mathbb{P}^3 = \mathbb{P}(\mathbf{x}_0, \mathbf{x}_1, \mathbf{x}_2, \mathbf{x}_3)$  defined by the homogeneous quartic equation
\beq
\label{eqn:Vinberg_body}
  \mathbf{x}_0^2 \mathbf{x}_2 \mathbf{x}_3 - 4 \mathbf{x}_1^3 \mathbf{x}_3 - \mathbf{x}_2^4 - \mathbf{x}_1 \mathbf{x}_3^2 \, g(\mathbf{x}_0, \mathbf{x}_1, \mathbf{x}_3) - \mathbf{x}_2 \mathbf{x}_3 \, f(\mathbf{x}_1, \mathbf{x}_2, \mathbf{x}_3) = 0 \,,
\eeq
with
\beq
  g = g_0 \mathbf{x}_0 + g_1 \mathbf{x}_1 + g_3 \mathbf{x}_3 \,, \qquad 
  f =  f_{12} \mathbf{x}_1 \mathbf{x}_2 + f_{22} \mathbf{x}_2^2  + f_{13} \mathbf{x}_1 \mathbf{x}_3  + f_{23} \mathbf{x}_2 \mathbf{x}_3 + f_{33} \mathbf{x}_3^2  \,.
\eeq
Setting $g_0=0$ we obtain Equation~(\ref{eqn:Vinberg_body1}). We then have the following:
\begin{theorem}
\label{thm_Vinberg}
Assume that $( f_{1,3}, f_{2,3}, f_{3,3}, g_0, g_1, g_3 ) \neq 0$.  The minimal resolution of Equation~(\ref{eqn:Vinberg_body}) is a K3 surface $\mathcal{X}''$ endowed with a canonical  $P''_{13}$-polarization. Conversely, every $P''_{13}$-polarized K3 surface has a birational projective model given by Equation~(\ref{eqn:Vinberg_body}). In particular, the Jacobian elliptic fibrations in Lemma~\ref{prop:ADE} are attained as follows:
\beqn
\begin{array}{c|c|c|c|l}
\# 	&  \text{singular fibers} 	& \operatorname{MW} & \text{root lattice} & \text{substitution} \; [\mathbf{x}_0 : \mathbf{x}_1: \mathbf{x}_2 : \mathbf{x}_3 ] = \\
\hline
1 	& II^* + I_4 + 10 I_1		& \{ \mathbb{I} \} 	& E_8 + A_3		&  [ y + g_0 v^2/2 \, :  u v x : 4 u^3 v^3 z : 4 u^2 v^4 z] \\ 
\hline
2	& I_7^* + 11 I_1		&  \{ \mathbb{I} \} 	& D_{11} 			&  [ \sqrt{2}y + g_0 u v^5 z :  u v x : 2v^2 x : 4v^6 z]  
\end{array}
\eeqn
\noindent The Jacobian elliptic fibrations on a general $P''_{14}$-polarized K3 surface in Lemma~\ref{prop:ADE} are attained by setting $g_0=0$ above.  They are as follows:
\beqn
\begin{array}{c|c|c|c|l}
\# 	&  \text{singular fibers} 	& \operatorname{MW} & \text{root lattice} & \text{substitution} \; [\mathbf{x}_0 : \mathbf{x}_1: \mathbf{x}_2 : \mathbf{x}_3 ] = \\
\hline
1 	& II^* + I_0^* + 8 I_1		& \{ \mathbb{I} \} 	& E_8 + D_4		&  [ y :  u v x : 4 u^3 v^3 z : 4 u^2 v^4 z] \\ 
\hline
2	& I_8^* + 10 I_1		&  \{ \mathbb{I} \} 	& D_{12} 			&  [ \sqrt{2}y :  u v x : 2v^2 x : 4v^6 z]  
\end{array}
\eeqn
Moreover, for $g_0=g_3=0$ the polarizing lattice extends to $H \oplus E_8(-1) \oplus D_5(-1)$.
\end{theorem}
\begin{proof}
One constructs the explicit Weierstrass models using the substitutions provided in the statement. Using fibration~$(1)$ it follows immediately that a K3 surface $\mathcal{X}''$ is endowed with a canonical  $H \oplus E_8(-1) \oplus A_3(-1)$-polarization. We proved in Proposition~\ref{cor:EFS5} that there are only two inequivalent Jacobian elliptic fibrations on K3 surfaces with a N\'eron-Severi lattice isomorphic to $H \oplus E_8(-1) \oplus A_3(-1)$. We realized both as explicit Weierstrass models. The Vinberg quartic determines a K3 surface if and only if the given substitution for fibration~$(1)$ determines an irreducible, minimal Weierstrass model. One checks using fibration~$(1)$ that this is the case if and only if
\beq
 \Big( f_{1,3}, f_{2,3}, f_{3,3}, g_0, g_1, g_3 \Big) \neq 0 \,.
\eeq
Conversely, it was proved in \cite{MR2718942} that every $H \oplus E_8(-1) \oplus A_3(-1)$-polarized K3 surface, up to isomorphism, is realized as the minimal resolution of a quartic in Equation~(\ref{eqn:Vinberg_body}). Lastly, it was proven in \cite{MR2718942} that the extension in Equation~(\ref{eqn:lattices}) to $n=6$ occurs along the locus $g_0=0$, and to $n=5$ along $g_0=g_3=0$. Similarly, the extension to $n=4$ occurs when $f_{33}=g_0=g_3=0$. One checks that fibration~$(1)$ has singular fibers $II^* + I_1^* + 7 I_1$ and $II^* + I_2^* + 6 I_1$ for $g_0=g_3=0$ and $g_0=g_3=f_{33}=0$, respectively. 
\end{proof}
We also have:
\begin{proposition}
\label{prop:coincide}
Using the notation above, for $(\eta, \iota)=(\kappa, \lambda)=(0,1)$ and $g_0=g_3=f_{33}=0$ and 
\beq
\label{eqn:coeffs_invs}
\begin{array}{c}
 f_{1,2}=- 3 \alpha , \quad  f_{2,2} =- \beta ,\quad  g_1 = \gamma \varepsilon , \quad f_{1,3} = - \frac{\gamma \zeta + \delta \varepsilon}{2} , \quad f_{2,3} =\frac{\delta \zeta}{4},
\end{array}
\eeq
the quartics in Equation~(\ref{eqn:Vinberg_body}) and Equation~(\ref{quartic1}) are birationally equivalent. Moreover, the remaining invariants are generators of $A(\mathcal{D}_4, \Gamma_4)$ in Equation~(\ref{eqn:generators}).
\end{proposition}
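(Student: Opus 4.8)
The plan is to realize both restricted quartic families inside the same moduli space and then to pin down the coordinate dictionary~\eqref{eqn:coeffs_invs} by comparing a Jacobian elliptic fibration that both quartics carry explicitly. First I would record that both sides are $H\oplus E_8(-1)\oplus D_6(-1)$-polarized: setting $(\eta,\iota)=(\kappa,\lambda)=(0,1)$ in~\eqref{mainquartic} turns $B(u,v)$ in~\eqref{eqn:polynomials0} into $(\gamma u-\delta v)(\varepsilon u-\zeta v)\,v^2$, so that $j'_4=b_4=0$ and $j'_6=b_3=0$, whence by the last statement of Theorem~\ref{thm:moduli_space_alt1} the resolution is $\mathrm{P}_{\!(0,0)}=H\oplus E_8(-1)\oplus D_6(-1)$-polarized; Theorem~\ref{thm_Vinberg} asserts the same for the Vinberg quartic~\eqref{eqn:Vinberg_body} with $g_0=g_3=f_{33}=0$. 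Since the coarse moduli space of $H\oplus E_8(-1)\oplus D_6(-1)$-polarized K3 surfaces is irreducible six-dimensional, carries $[j_4:j_6:j_8:j_{10}:j_{12}]\in\mathbb{WP}_{(4,6,8,10,12)}$ as coordinates, and is covered by each of the two quartic families, it suffices to verify that under~\eqref{eqn:coeffs_invs} the two quartics determine the same point; the minimal resolutions $\mathcal{X}''$ and $\mathcal{X}$ are then isomorphic K3 surfaces, and composing the two birational morphisms to the quartics yields the asserted birational equivalence.

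The comparison goes through a fibration common to both. On the Vinberg quartic with $g_0=g_3=f_{33}=0$, fibration~$(1)$ of Theorem~\ref{thm_Vinberg} has singular fibers $II^*+I_2^*+6I_1$, root lattice $E_8(-1)\oplus D_6(-1)$, and trivial Mordell--Weil group; the identical configuration occurs as fibration~$(3)$ of Remark~\ref{rem:rank16} on the $\mathrm{P}_{\!(0,0)}$ quartic (cf.~\cite{CHM19}). I would substitute $[\mathbf{x}_0:\mathbf{x}_1:\mathbf{x}_2:\mathbf{x}_3]=[\,y:uvx:4u^3v^3z:4u^2v^4z\,]$ into~\eqref{eqn:Vinberg_body} with $g_0=g_3=f_{33}=0$, and the substitution~\eqref{eqn:substitution_bfd} with $(\eta,\iota)=(\kappa,\lambda)=(0,1)$ into~\eqref{mainquartic}, obtaining in each case, after cancelling the common powers of $u$ and $v$ and completing the cube, a Weierstrass model $y^2z=x^3+e(u,v)\,x^2z+f(u,v)\,xz^2+g(u,v)\,z^3$. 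Then one checks these two models agree after the standard fibre rescaling $(x,y)\mapsto(c^2x,c^3y)$, the base coordinate being fixed in both cases by Remark~\ref{rem:gauge_fixing0} and its Vinberg analogue. Matching the triples $(e,f,g)$ yields the proportionality between $(f_{1,2},f_{2,2},g_1,f_{1,3},f_{2,3})$ and $(\alpha,\beta,\gamma\varepsilon,\gamma\zeta+\delta\varepsilon,\delta\zeta)$; rewriting the latter via~\eqref{eqn:transfo_js} (with $b_4=b_3=0$, $b_2=\gamma\varepsilon$, $b_1=-(\gamma\zeta+\delta\varepsilon)$, $b_0=\delta\zeta$, $a_1=-3\alpha$, $a_0=-2\beta$) and Lemma~\ref{Satake6} gives exactly~\eqref{eqn:coeffs_invs}.

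For the closing assertion, Vinberg showed in~\cite{MR2718942} that the parameters $(f_{1,2},f_{2,2},g_1,f_{1,3},f_{2,3})$ of the $n=4$ quartic are, up to scaling, the free generators of $A(\mathcal{D}_4,\Gamma_4)$ of weights $4,6,8,10,12$ listed in~\eqref{eqn:generators}; combined with the previous step this exhibits $\{j_{2k}\}_{k=2}^{6}$ as an equivalent free generating set of that algebra. Their explicit expressions as theta constants and automorphic forms, which follow from the coincidence $D_{IV}(4)\cong I_{2,2}$ of bounded symmetric domains, are those recorded in~\cites{MR1103969,MR1204828,MR4015343}.

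I expect the main obstacle to be the explicit computation in the second paragraph: clearing the powers of $u$ and $v$ from the Vinberg Weierstrass model, carrying out the shift that removes the $x^2z$-term, and --- the genuinely delicate point --- checking that the two normalizations of the base coordinate $[u:v]$ are compatible, so that the $j$-invariants extracted on the two sides are equal rather than merely projectively proportional. Everything else is bookkeeping or a matter of citation.
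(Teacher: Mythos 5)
Your argument is correct in outline but takes a genuinely different route from the paper. The paper simply exhibits an explicit birational involution-like pair of maps $\mathbb{P}^3\dashrightarrow\mathbb{P}^3$, namely $[\mathbf{X}:\mathbf{Y}:\mathbf{Z}:\mathbf{W}]\mapsto[2\mathbf{x}_1\mathbf{x}_2:2\mathbf{x}_0\mathbf{x}_2:-\mathbf{x}_3(2\varepsilon\mathbf{x}_1-\zeta\mathbf{x}_2):2\mathbf{x}_2^2]$ and its inverse, and checks directly that it carries one quartic to the other; substituting and comparing coefficients then produces the dictionary~(\ref{eqn:coeffs_invs}) exactly, with no normalization issues, and the final claim about $A(\mathcal{D}_4,\Gamma_4)$ is quoted from \cite{MR4015343} just as you do. Your route instead goes through the coarse moduli space: both restricted families are $\mathrm{P}_{\!(0,0)}=H\oplus E_8(-1)\oplus D_6(-1)$-polarized, so it suffices to match points of $\mathcal{M}_{\mathrm{P}_{\!(0,0)}}$, which you propose to do by comparing the Weierstrass models of the common $E_8+D_6$ fibration (fibration~$(1)$ of Theorem~\ref{thm_Vinberg} at $g_0=g_3=f_{33}=0$ against fibration~$(3)$ of Remark~\ref{rem:rank16}). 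This is logically sound -- isomorphic minimal resolutions do give a birational equivalence of the quartics -- and it has the virtue of explaining \emph{why} the dictionary must exist, but it buys less than the paper's proof: you only get an abstract birational equivalence rather than an explicit map, and the step you yourself flag as delicate (matching the two base normalizations so that the coefficient identification is exact and not merely projective) is precisely the content that the paper's explicit map delivers for free. Two small corrections: the moduli space of $H\oplus E_8(-1)\oplus D_6(-1)$-polarized K3 surfaces is four-dimensional, not six-dimensional (it is the locus $j'_4=j'_6=0$ inside the six-dimensional $\mathcal{M}_{\mathrm{P}}$, with coordinates in $\mathbb{WP}_{(4,6,8,10,12)}$); and to invoke the coarse moduli space you should note that equality of points gives an isomorphism of \emph{lattice-polarized} K3 surfaces, which is all you need since any isomorphism of the resolutions composed with the resolution maps yields the birational equivalence of the quartics.
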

\begin{proof}
The birational map $\mathbb{P}^3  \dashrightarrow  \mathbb{P}^3$ given by
\beqn
\begin{split}
  [\mathbf{X}: \mathbf{Y}: \mathbf{Z}: \mathbf{W}]  \mapsto & \ [  2 \mathbf{x}_1\mathbf{x}_2 \ : \ 2 \mathbf{x}_0 \mathbf{x}_2 \ : \ - \mathbf{x}_3 \big(2 \varepsilon \mathbf{x}_1 - \zeta \mathbf{x}_2\big) \ : \ 2 \mathbf{x}_2^2  ] \,,
\end{split}
\eeqn
with the birational inverse $\mathbb{P}^3  \dashrightarrow   \mathbb{P}^3$ given by
\beqn
\label{eqn:rescaling_Vinberg}
\begin{split}
  [\mathbf{x}_0: \mathbf{x}_1: \mathbf{x}_2: \mathbf{x}_3]  \mapsto & \ [  \big(2 \varepsilon \mathbf{X} - \zeta \mathbf{W}\big) \mathbf{Y} \ : \  \big(2 \varepsilon \mathbf{X} - \zeta \mathbf{W}\big) \mathbf{X}
   \ : \  \big(2 \varepsilon \mathbf{X} - \zeta \mathbf{W}\big) \mathbf{W} \ : \ - 2 \mathbf{Z} \mathbf{W} ] \,,
\end{split}
\eeqn
realizes the equivalence. We already proved in \cite{MR4015343}  that in the case $(\eta, \iota)=(\kappa, \lambda)=(0,1)$ the non-vanishing invariants coincide with the generators of $A(\mathpzc{D}_4, \Gamma_4)$ in Equation~(\ref{eqn:generators}) defined by Vinberg.
\end{proof}
\par For $\Lambda \in \mathbb{C}^\times$ the projective automorphism, given by
\beqn
\label{tmor_Vinberg}
\mathbb{P}^3 \ \to \ \mathbb{P}^3, \qquad [\mathbf{x}_0: \mathbf{x}_1: \mathbf{x}_2: \mathbf{x}_3] \ \mapsto \ [\ \Lambda \mathbf{x}_0: \ \mathbf{x}_1 :\ \Lambda^{-2} \mathbf{x}_2: \ \Lambda^{-8}\mathbf{x}_3 \ ] \,,
\eeqn
extends to an isomorphism of K3 surfaces that rescales the coefficients according to
\beq
\label{eqn:scaling_Vinberg}
\begin{array}{l}
 \Big( f_{1,2}, f_{2,2}, g_1, f_{1,3}, f_{2,3}, g_0^2, g_3, f_{3,3}\Big)  \mapsto \hfill \\[0.2em]
\qquad  \qquad   \Big( \Lambda^4 f_{1,2},  \Lambda^6 f_{2,2},  \Lambda^8 g_1,  \Lambda^{10} f_{1,3},  \Lambda^{12} f_{2,3},  \Lambda^{14} g_0^2,  \Lambda^{16} g_3,  \Lambda^{18} f_{3,3}\Big) \,.
\end{array}  
\eeq
Moreover, one can tell precisely when two members of the family in Equation~(\ref{eqn:Vinberg_body}) are isomorphic. Using an appropriate normal form for fibration~$(1)$ in Theorem~\ref{thm_Vinberg} and an analogous argument as in Sections~\ref{ssec:moduli_space} and~\ref{sssec:PPmoduli},  it follows that two members  are isomorphic if and only if their coefficient sets are related by Equation~(\ref{eqn:scaling_Vinberg}).  We use Equation~(\ref{eqn:inv_PPP}), set  $\mathscr{J}_{14} = g_0^2, \mathscr{J}_{16}=g_3, \mathscr{J}_{18}=f_{3,3}$,  and obtain the following:
\begin{theorem}
\label{thm:Vinberg_moduli}
The seven-dimensional open analytic space
\beq
  \Big \{  \ 
\Big[   \mathscr{J}_4 :  \dots : \mathscr{J}_{18}  \Big]   \in  \mathbb{WP}_{(4,6,8,10,12,14, 16,18)} \ \Big| \
( \mathscr{J}_8 ,  \mathscr{J}_{10},  \mathscr{J}_{12},  \mathscr{J}_{14}, \mathscr{J}_{16}, \mathscr{J}_{18}) \neq 0 \
\Big \} 
\eeq
forms a coarse moduli space of $P''_{13}$-polarized K3 surfaces. Moreover, the coarse moduli space of $P''_{14}$-polarized K3 surfaces is the subspace $\mathscr{J}_{14}=0$, the coarse moduli space of $H \oplus E_8(-1) \oplus D_5(-1)$-polarized K3 surfaces  is the subspace $\mathscr{J}_{14}=\mathscr{J}_{16}=0$.
\end{theorem}
\begin{proof}
One checks that the Weierstrass models in Theorem~\ref{thm_Vinberg} only depend on $g_0^2$. It follows that $\mathscr{J}_{14} = g_0^2$ is a coordinate on the moduli space. As proved by Vinberg  in \cite{MR2718942} the invariants $\mathscr{J}_4, \dots, \mathscr{J}_{18}$, up to the rescaling given by Equation~(\ref{eqn:coeffs_invs}), freely generate the coordinate ring of the moduli space. The remainder of the statement follows directly from Theorem~\ref{thm_Vinberg} or was already proven in \cite{MR2718942}.
\end{proof}
\section{Dual graphs of rational curves}
\label{sec:dual_graphs}
In this section we determine the dual graphs of smooth rational curves and their intersection properties on the K3 surfaces $\mathcal{X}$, $\mathcal{X}'$, and $\mathcal{X}''$ with the N\'eron-Severi lattices $P_{14}$, $P'_{14}$, and $P''_{14}$, respectively. We define the \emph{dual graph} of smooth rational curves to be the simplicial complex whose set of vertices is a set of smooth rational curves on a K3 surface such that the vertices $\Sigma, \Sigma'$ are joined by an $m$-fold edge if and only if $\Sigma \circ \Sigma' = m$. For Picard number bigger than or equal to $15$, the possible dual graphs of all smooth rational curves on K3 surfaces with finite automorphism groups were determined in  \cite{MR556762}*{Sec.~\!4}.  
\subsection{The dual graph for \texorpdfstring{$P_{14}$}{P}-polarized K3 surfaces}
\label{ssec:dual_graph_P}
We will now determine the dual graph of smooth rational curves for the K3 surfaces $\mathcal{X}$ in Theorem~\ref{thm1} with N\'eron-Severi lattice $P_{14}$. The case of the K3 surfaces $\mathcal{X}$ of Picard rank 15 and the embedding of the reducible fibers into the dual graph of smooth rational curves are determined in Appendix~\ref{sec:rank15}. Results for the case of Picard rank 16 were obtained by the authors in \cite{CHM19}.
\par We consider the following complete intersections that can be easily checked to lie on the quartic $\mathcal{Q}$ in Equation~(\ref{quartic1}) and lift to rational curves on the smooth K3 surface:
\smallskip
\beqn
\begin{split}
R_1\colon &\left\lbrace 
  \begin{array}{rcl}
   0 & = &  2\varepsilon \mathbf{X}-\zeta \mathbf{W}\\
   0 & = & \big(3 \alpha \varepsilon ^2 \zeta + 2 \beta \varepsilon ^3 - \zeta^3 \big) \mathbf{W}^2 -  \varepsilon \big(\varepsilon \iota-\eta\zeta\big) \big( \delta \varepsilon-\gamma \zeta \big) \mathbf{Z} \mathbf{W} 
   + 2 \varepsilon^3 \mathbf{Y}^2,
  \end{array} \right. \\[0.5em]
R_2 \colon&\left\lbrace 
  \begin{array}{rcl}
   0 & = &  2\gamma \mathbf{X}-\delta \mathbf{W} \\
   0 & = & \gamma \big(\gamma\lambda-\delta\kappa\big) \big( \gamma \zeta - \delta \varepsilon \big) \mathbf{W}^3  - \big(3 \alpha \gamma ^2 \delta + 2 \beta \gamma ^3 - \delta^3 \big) \mathbf{Z} \mathbf{W}^2 
- 2 \gamma^3 \mathbf{Y}^2 \mathbf{Z},
  \end{array} \right. \\[0.5em]
R_{3}\colon&\left\lbrace 
  \begin{array}{rcl}
   0 & = &  2\eta \mathbf{X} - \iota \mathbf{W}  \\
   0 & = & \eta \big(\eta\lambda-\iota\kappa) \big( \varepsilon \iota - \zeta \eta \big) \mathbf{W}^3 
   -   \big( \iota^3 - 3 \alpha \eta^2\iota - 2 \beta \eta^3 \big) \mathbf{Z} \mathbf{W}^2 
+ 2 \eta^3 \mathbf{Y}^2 \mathbf{Z},
  \end{array} \right.
\end{split}  
\eeqn
and 
\beqn
\begin{split}
R_{4}\colon &\left\lbrace 
  \begin{array}{rcl}
   0 & = & 2 \varepsilon \mathbf{X} - \zeta \mathbf{W} + \gamma \varepsilon \eta \mathbf{Z}\\
   0 & = & (  \delta \zeta \iota -  2\beta \gamma \varepsilon \eta + \gamma^2 \varepsilon^2 \eta^2 \lambda)  \mathbf{W}^2 
   + 4 (\delta \varepsilon \eta + \gamma \zeta \eta + \gamma \varepsilon\iota)  \mathbf{X}^2 - 2 \gamma \varepsilon \eta  \mathbf{Y}^2 \\
   &-& 2 (3 \alpha \gamma \varepsilon\eta + \gamma \zeta\iota +  \delta \varepsilon\iota + \delta \zeta \eta
   + \gamma^2\varepsilon^2\eta^2\kappa)  \mathbf{X} \mathbf{W},
  \end{array} \right. \\[0.5em]
R_5\colon &\left\lbrace 
  \begin{array}{rcl}
   0 & = & \gamma \varepsilon \eta\lambda  \mathbf{W}^3 - \delta \iota \mathbf{W}^2 \mathbf{Z} -2\gamma\varepsilon\eta\kappa \mathbf{W}^2 \mathbf{X} + 2 (\gamma\iota+\delta\eta) \mathbf{X}\mathbf{Z}\mathbf{W} - 4 \gamma \eta \mathbf{X}^2 \mathbf{Z}\\
   0 & = & (  \delta \zeta \iota -  2\beta \gamma \varepsilon \eta + \gamma^2 \varepsilon^2 \eta^2 \lambda)  \mathbf{W}^2 
   + 4 (\delta \varepsilon \eta + \gamma \zeta \eta + \gamma \varepsilon\iota)  \mathbf{X}^2 - 2 \gamma \varepsilon \eta  \mathbf{Y}^2 \\
   &-& 2 (3 \alpha \gamma \varepsilon\eta + \gamma \zeta\iota +  \delta \varepsilon\iota + \delta \zeta \eta
   + \gamma^2\varepsilon^2\eta^2\kappa)  \mathbf{X} \mathbf{W},
  \end{array} \right. 
\end{split}  
\eeqn
and
\beqn
\begin{split}
R_{6}\colon&\left\lbrace 
  \begin{array}{rcl}
   0 & = & 2 \kappa \mathbf{X} - \lambda \mathbf{W} + \gamma \kappa \eta \mathbf{Z}\\
   0 & = & (  \delta \iota \lambda -  2\beta \gamma \eta \kappa + \gamma^2 \zeta \eta^2 \kappa^2 )  \mathbf{W}^2 
    +4 (\delta \eta \kappa  + \gamma \eta \lambda + \gamma \iota \kappa)  \mathbf{X}^2- 2 \gamma \eta \kappa \mathbf{Y}^2 \\
   &- &2 (3 \alpha \gamma \eta \kappa  + \gamma \iota \lambda +  \delta\iota \kappa + \delta \eta \lambda
   + \gamma^2\varepsilon\eta^2\kappa^2)  \mathbf{X} \mathbf{W},
  \end{array} \right. \\[0.5em]
R_{7}\colon &\left\lbrace 
  \begin{array}{rcl}
   0 & = & \gamma \zeta \eta \kappa  \mathbf{W}^3 - \delta \iota \mathbf{W}^2 \mathbf{Z} -2\gamma\varepsilon\eta\kappa \mathbf{W}^2 \mathbf{X} + 2 (\gamma\iota+\delta\eta) \mathbf{X}\mathbf{Z}\mathbf{W} - 4 \gamma \eta \mathbf{X}^2 \mathbf{Z} \\
   0 & = & (  \delta \iota \lambda -  2\beta \gamma  \eta \kappa + \gamma^2 \zeta \eta^2 \kappa^2 )  \mathbf{W}^2 
    +4 (\delta \eta \kappa  + \gamma \eta \lambda + \gamma \iota \kappa)  \mathbf{X}^2- 2 \gamma \eta \kappa \mathbf{Y}^2 \\
   &- &2 (3 \alpha \gamma \eta \kappa  + \gamma \iota \lambda +  \delta \iota \kappa+ \delta \eta \lambda
   + \gamma^2\varepsilon\eta^2\kappa^2)  \mathbf{X} \mathbf{W},
  \end{array} \right. 
\end{split}  
\eeqn
and 
\beqn
\begin{split}
R_{8}\colon &\left\lbrace 
  \begin{array}{rcl}
   0 & = &  2\kappa \mathbf{X} - \lambda \mathbf{W}  \\
   0 & = &\big(3 \alpha \kappa^2 \lambda + 2 \beta \kappa^3 - \lambda^3 \big) \mathbf{W}^2 -  \kappa \big(\eta\lambda-\iota\kappa\big) \big( \gamma \lambda-\delta \kappa \big) \mathbf{Z} \mathbf{W} 
   + 2 \kappa^3 \mathbf{Y}^2,
  \end{array} \right.
\end{split}  
\eeqn
and 
\beqn
\begin{split}
R_{9}\colon &\left\lbrace 
  \begin{array}{rcl}
   0 & = &  \zeta \lambda \mathbf{W}^2  - \delta \varepsilon\eta\kappa \mathbf{W} \mathbf{Z} - 2(\varepsilon\lambda+\zeta\kappa) \mathbf{W} \mathbf{X} + 2 \gamma\varepsilon\eta\kappa  \mathbf{X} \mathbf{Z}
   + 4 \varepsilon \kappa  \mathbf{X}^2  \\
    0 & = & (  \zeta \iota \lambda -  2\beta \varepsilon  \eta \kappa + \delta \varepsilon^2  \eta^2 \kappa^2 )  \mathbf{W}^2 
    + 4 (\zeta \eta \kappa  + \varepsilon \eta \lambda + \varepsilon \iota \kappa)  \mathbf{X}^2- 2 \varepsilon \eta \kappa \mathbf{Y}^2 \\
   &-&2 (3 \alpha \varepsilon \eta \kappa  + \varepsilon \iota \lambda +  \zeta \iota \kappa+ \zeta \eta \lambda
   + \gamma\varepsilon^2\eta^2\kappa^2)  \mathbf{X} \mathbf{W},  \end{array} \right. \\[0.5em]
R_{10}\colon &\left\lbrace 
  \begin{array}{rcl}
   0 & = &  \varepsilon \eta\kappa \mathbf{W}^2 - \iota \mathbf{W} \mathbf{Z} + 2 \eta \mathbf{X} \mathbf{Z}   \\
    0 & = & (  \zeta \iota \lambda -  2\beta \varepsilon  \eta \kappa + \delta \varepsilon^2  \eta^2 \kappa^2 )  \mathbf{W}^2 
    + 4 (\zeta \eta \kappa  + \varepsilon \eta \lambda + \varepsilon \iota \kappa)  \mathbf{X}^2- 2 \varepsilon \eta \kappa \mathbf{Y}^2 \\
   &-&2 (3 \alpha \varepsilon \eta \kappa  + \varepsilon \iota \lambda +  \zeta \iota \kappa+ \zeta \eta \lambda
   + \gamma\varepsilon^2\eta^2\kappa^2)  \mathbf{X} \mathbf{W},  \end{array} \right. 
\end{split}  
\eeqn
and
\beqn
\begin{split}
R_{11}\colon &\left\lbrace 
  \begin{array}{rcl}
   0 & = &  \zeta \lambda \mathbf{W}^2  - \gamma \varepsilon\iota\kappa \mathbf{W} \mathbf{Z} - 2(\varepsilon\lambda+\zeta\kappa) \mathbf{W} \mathbf{X} + 2 \gamma\varepsilon\eta\kappa  \mathbf{X} \mathbf{Z}
   + 4 \varepsilon \kappa  \mathbf{X}^2  \\
    0 & = & (  \delta \zeta \lambda -  2\beta \gamma \varepsilon  \kappa + \gamma^2 \varepsilon^2  \iota \kappa^2 )  \mathbf{W}^2 
    + 4 (\gamma \zeta \kappa  + \gamma \varepsilon \lambda + \delta \varepsilon \kappa)  \mathbf{X}^2- 2 \gamma \varepsilon \kappa \mathbf{Y}^2 \\
   &-&2 (3 \alpha \gamma \varepsilon \kappa  + \delta \varepsilon \lambda +  \delta \zeta \kappa+ \gamma \zeta \lambda
   + \gamma^2\varepsilon^2\eta\kappa^2)  \mathbf{X} \mathbf{W}.  \end{array} \right. \\[0.5em]
R_{12}\colon &\left\lbrace 
  \begin{array}{rcl}
0 & = &  \varepsilon \eta\kappa \mathbf{W}^2 - \delta \mathbf{W} \mathbf{Z} + 2 \gamma \mathbf{X} \mathbf{Z}   \\
    0 & = & (  \delta \zeta \lambda -  2\beta \gamma \varepsilon  \kappa + \gamma^2 \varepsilon^2  \iota \kappa^2 )  \mathbf{W}^2 
    + 4 (\gamma \zeta \kappa  + \gamma \varepsilon \lambda + \delta \varepsilon \kappa)  \mathbf{X}^2- 2 \gamma \varepsilon \kappa \mathbf{Y}^2 \\
   &-&2 (3 \alpha \gamma \varepsilon \kappa  + \delta \varepsilon \lambda +  \delta \zeta \kappa+ \gamma \zeta \lambda
   + \gamma^2\varepsilon^2\eta\kappa^2)  \mathbf{X} \mathbf{W}.  \end{array} \right. 
\end{split}  
\eeqn
\par We also remind the reader that lines $L_1 $, $ L_2 $, $L_3$, $L_4$, $L_5$ on the quartic surface were defined in Equation~(\ref{eqn:lines}). When resolving the quartic surface~(\ref{quartic1}),  the above curves lift to smooth rational curves on $\mathcal{X}(\alpha, \beta, \gamma, \delta , \varepsilon, \zeta, \eta, \iota, \kappa, \lambda)$, which by a slight  abuse of notation we shall denote by the same symbol.  One easily verifies that for general parameters the singularity at $\mathrm{P}_1$ is a rational double point of type $A_7$, and  $\mathrm{P}_2$ is of type $A_3$. The two sets $\{ a_1, a_2, \dots, a_7 \}$ and $\{b_1, b_2, b_3\}$ will denote the curves appearing from resolving the rational double  point singularities at $\mathrm{P}_1$ and $\mathrm{P}_2$, respectively. 
\begin{figure}
\hspace*{-0.5cm}
\begin{subfigure}{.5\textwidth}  
  	\centering
  	\scalebox{\MyScalePicLarge}{%
    		\begin{tikzpicture}
       		\input{pic14_1.tex}
    		\end{tikzpicture}}
  	\caption{with double lines and simple lines}
  	\label{fig:pic14a}
\end{subfigure}%
\hspace*{0.3cm}
\begin{subfigure}{.5\textwidth}  
  	\centering
  	\scalebox{\MyScalePicLarge}{%
    		\begin{tikzpicture}
       		\input{pic14_2.tex}
    		\end{tikzpicture}}
  	\caption{with 6-fold lines (thick), 4-fold lines (thin)}
  	\label{fig:pic14b}
\end{subfigure}%
\caption{Rational curves on $\mathcal{X}$ with $\mathrm{NS}(\mathcal{X})=P_{14}$}
\label{fig:pic14}
\end{figure}
We have the following:
\begin{theorem}
\label{thm:polarization14}
Assuming Equation~(\ref{eqn:general_params}), for a K3 surface $\mathcal{X}$ with N\'eron-Severi lattice $P_{14}$ in Theorem~\ref{thm1} the dual graph of all smooth rational curves is given by Figure~\ref{fig:pic14} where single and double edges are shown in Figure~\ref{fig:pic14a} and six-fold and four-fold edges are shown in Figure~\ref{fig:pic14b}.
\end{theorem}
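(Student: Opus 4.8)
The plan is to verify directly that the curves $\mathrm{L}_1,\dots,\mathrm{L}_5$, $\mathrm{R}_1,\dots,\mathrm{R}_{12}$, together with the exceptional curves $a_1,\dots,a_7$ and $b_1,b_2,b_3$ obtained by resolving the rational double points at $\mathrm{P}_1$ (type $A_7$) and $\mathrm{P}_2$ (type $A_3$), produce exactly the vertex set of Figure~\ref{fig:pic14}, and that all intersection numbers are as depicted. The key input is that $\operatorname{rank}\operatorname{NS}(\mathcal{X})=14$ and that a complete list of Jacobian elliptic fibrations is available from Theorem~\ref{thm1}: the divisor classes of fibers, zero-sections and torsion-sections of these fibrations already span a finite-index sublattice of $\mathrm{P}$, so there is no room for ``extra'' curves to be missed beyond those forced by the fibrations. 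Thus the strategy divides into an \emph{existence} part (all displayed curves are smooth rational curves on $\mathcal{X}$ with the stated incidences) and a \emph{completeness} part (no other smooth rational curves exist).

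First I would carry out the existence part. Each $\mathrm{R}_i$ is given as a complete intersection of the quartic $\mathrm{Q}$ with two further hypersurfaces; one checks by direct substitution (already asserted in the text before the theorem) that these equations cut out curves lying on $\mathrm{Q}$, and that after the minimal resolution these lift to $(-2)$-curves. For the lines $\mathrm{L}_i$ this is immediate since they pass through $\mathrm{P}_1$ and become sections or bisections of the relevant pencils. The incidence data are then read off from the explicit Weierstrass models in Sections~\ref{sssec:WEalt}--\ref{sssec:WEmax}: each $\mathrm{L}_i(u,v)=0$ pencil realizes one of fibrations $(1)$--$(4)$, and the curves $\mathrm{R}_j$, $a_k$, $b_\ell$ distribute among the components of the reducible fibers $I_4^*$, $I_2$, $I_0^*$, etc.\ and the sections and two-torsion sections. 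Matching the singular-fiber types ($D_8+A_1^{\oplus4}$ for fibration~$(1)$, $D_6^{\oplus2}$ for fibration~$(2)$, $E_7+D_4+A_1$ and $E_8+A_1^{\oplus4}$ for the two realizations of fibration~$(3)$, $D_{10}+A_1^{\oplus2}$ for fibration~$(4)$) to the local shape of the graph near each of $\mathrm{L}_1,\dots,\mathrm{L}_5$ pins down every simple and double edge; the higher-multiplicity edges among the $\mathrm{R}_j$ (the $4$-fold and $6$-fold edges of Figure~\ref{fig:pic14b}) are then computed as intersection numbers of the explicit complete intersections, or deduced from $\mathrm{R}_j\cdot\mathrm{R}_k = \tfrac12(\mathrm{R}_j+\mathrm{R}_k)^2+2$ once the classes are expressed in a fibration basis.

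For the completeness part I would argue as follows. By Sterk's theorem (cited in the introduction) $\mathcal{X}$ carries only finitely many smooth rational curves, and by Nikulin's and Kondo's work the dual graph of all smooth rational curves on a K3 surface with this N\'eron-Severi lattice is rigidly determined by the lattice itself; so it suffices to exhibit one configuration whose classes generate $\mathrm{P}$ and whose combinatorics is consistent, i.e.\ matches one of Nikulin's admissible graphs. Concretely, I would pick an ample (or at least big and nef) class $h$ on $\mathcal{X}$, for instance the hyperplane class of the quartic pulled back and corrected by the exceptional curves, and show that every $(-2)$-class $C$ with $C\cdot h$ bounded (the bound coming from finitely many fiber classes of the listed fibrations) is a nonnegative combination of the displayed curves; equivalently, the displayed curves are exactly the $h$-minimal effective $(-2)$-classes. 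Because the five fibrations of Theorem~\ref{thm1} together ``see'' all of $\mathrm{P}$, any additional rational curve would have to appear as a component of a reducible fiber or a section of at least one of them, contradicting the classification of those fibers. This closes the argument.

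The main obstacle I anticipate is the completeness step: verifying that no rational curve is overlooked requires either a careful Vinberg-style analysis of the Weyl chamber of $\mathrm{P}$ (tracking all $(-2)$-classes of small degree against a fixed polarization) or a precise comparison with the canonical graph in Nikulin's tables; keeping the bookkeeping of the $17+10=27$ curves and their up-to-$6$-fold intersections consistent across all five fibration bases is delicate. The existence step, by contrast, is a finite sequence of explicit — if tedious — polynomial verifications, most of which are already implicit in the Weierstrass computations of Section~\ref{sec:projective_models}.
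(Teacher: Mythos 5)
Your existence/incidence step is essentially the paper's proof: the authors check that each $\mathrm{R}_i$ lies on $\mathrm{Q}$, record which of $\mathrm{P}_1$ (type $A_7$) and $\mathrm{P}_2$ (type $A_3$) each curve passes through and with what multiplicity, compute the four-fold and six-fold intersections among $\mathrm{R}_4,\dots,\mathrm{R}_{12}$ directly from the defining equations (the pairs sharing a common second equation for $\mathbf{Y}^2$ meet in the six, resp.\ four, roots of an explicit polynomial), and then cross-check everything against the reducible fibers of the five fibrations of Theorem~\ref{thm1} exactly as you describe. So for that part you and the paper agree.

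The gap is in your completeness step, and it is a real one. First, the appeal to ``one of Nikulin's admissible graphs'' does not apply here: as the paper itself notes at the start of Section~\ref{sec:dual_graphs}, Nikulin's explicit classification of the dual graphs of all smooth rational curves covers Picard rank $\ge 15$ (and is indeed invoked for the rank-$15$ and rank-$16$ specializations in the appendices), but not the rank-$14$ case at hand, so there is no table to match against. Second, your closing argument --- that any additional smooth rational curve ``would have to appear as a component of a reducible fiber or a section of at least one of them'' --- is false as stated: a smooth rational curve $C$ on a Jacobian elliptic K3 surface need only satisfy $C\cdot F\ge 0$, and if $C\cdot F\ge 2$ for every fiber class $F$ of the five fibrations, $C$ is a multisection of each of them and escapes your dichotomy entirely. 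The paper's own graph illustrates this: the curves $\mathrm{R}_4,\dots,\mathrm{R}_{12}$ are neither fiber components nor sections of the alternate fibration (whose section and two-torsion section are $b_2$ and $a_1$), yet they are genuine smooth rational curves. To close completeness one needs either a Vinberg-algorithm computation of all indecomposable effective $(-2)$-classes for $\mathrm{P}$ relative to a fixed polarization (the ``delicate'' route you yourself flag), or a citation of a classification that actually covers rank $14$; the fibration-based shortcut does not substitute for it.
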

\begin{remark}
The embeddings of the reducible fibers for each elliptic fibration in Theorem~\ref{thm1} into the graph given by Figure~\ref{fig:pic14} will be constructed in Sections~\ref{ssec:alt14}-\ref{ssec:max14} for Picard number 14, in Sections~\ref{ssec:alt}-\ref{ssec:max} for Picard number 15.
\end{remark}
\begin{proof}
Assuming Equation~(\ref{eqn:general_params}), the above equations determine projective curves $R_1, R_4, R_6, R_8$, and $R_2, R_3$ of degrees two and three, respectively.  The conic $R_1$ is a smooth rational curve tangent to $L_1$ at $\mathrm{P}_2$. The cubics $R_2$, $R_3$ pass through the points $\mathrm{P}_1, \mathrm{P}_2$. The cubic $R_2$ has a double point at $\mathrm{P}_2$, passes through $\mathrm{P}_1$ and is irreducible. For the pairs of curves $\{R_4, R_5\}$, $\{R_6, R_7\}$, $\{R_9, R_{10}\}$, $\{R_{11}, R_{12}\}$, their respective second equations coincide and determine $\mathbf{Y}^2$. Thus, six intersection points of $R_4$ and $R_5$ are given by the solutions of
\beq
\begin{split}
 \big( \delta \zeta \iota - \gamma^2 \varepsilon^2 \eta^2 \lambda \big) \mathbf{W}^3 
 -2 \big( \gamma \zeta \iota + \delta \varepsilon \iota + \delta\zeta \eta - 2 \gamma^2 \varepsilon^2\eta^2\kappa \big) \mathbf{X}\mathbf{W}^2 \\
 + 4 \big( \gamma \varepsilon \iota + \gamma \zeta \eta + \delta \varepsilon \eta \big) \mathbf{X}^2\mathbf{W} - 8 \gamma \varepsilon \eta \mathbf{X}^3 =0 \,, \quad
\end{split} 
\eeq 
and $2 \varepsilon \mathbf{X} - \zeta \mathbf{W} - \gamma \varepsilon \eta \mathbf{Z} =0$ and $\mathbf{Y}^2 = \dots$. An analogous computation allows to compute the six intersection points of $\{ R_6, R_7\}$, $\{ R_9, R_{10}\}$, $\{ R_{11}, R_{12}\}$.  Similarly, one shows that each pair out of $\{R_4, R_7\}$,   $\{R_4, R_{10}\}$, $\{R_4, R_{12}\}$,  $\{R_5, R_6\}$, $\{R_5, R_9\}$,  $\{R_5, R_{11}\}$, $\{R_6, R_{10}\}$,  $\{R_6, R_{12}\}$, $\{R_7, R_9\}$, $\{R_7, R_{11}\}$, $\{R_9, R_{12}\}$, $\{R_{10}, R_{11}\}$, has four intersection points. These six-fold and four-fold edges are shown in Figure~\ref{fig:pic14b}.
 Using Equation~(\ref{quartic1}), one then derives by standard methods the intersection numbers of the rational curves on the minimal resolution. The analysis is simplified by taking  the various elliptic fibrations in Theorem~\ref{thm1} into account and determining what rational curves are contained in certain reducible fibers.  The results then determine Figure~\ref{fig:pic14} uniquely. A comparison of the divisor classes for the constructed rational curves and their intersection numbers with the lattice-theoretic results obtained in \cite{Roulleau22} shows that they constitute all rational curves. 
\end{proof}
\par We have the following:
\begin{proposition}
\label{prop:polarization14}
The polarization of a general K3 surface $\mathcal{X}(\alpha, \beta, \gamma, \delta , \varepsilon, \zeta, \eta, \iota, \kappa, \lambda)$ is given by the divisor
\beq
\label{linepolariz14}
\mathcal{H} = 2 L_2 + L_3 + L_5 + 3 a_1 + 4 a_2 + 5 a_3 + 4 a_4 + 3 a_5 + 2 a_6 + a_7  \,,
\eeq
with $\mathcal{H}^2=4$. 
\end{proposition}
\begin{proof}
Using the reducible fibers provided for each fibration in Sections~\ref{ssec:alt14}-\ref{ssec:max14}, there are several equivalent ways to express the smooth fiber class for a given fibration. In this way, we obtain the linear relations between the divisors $R_1, \dots, R_{12}$, $L_1, \dots, L_5$, and $a_1, \dots, a_7, b_1, b_2, b_3$. From these relations, we obtain the divisor classes of $R_1, \dots, R_{12}$ and $L_4$ as linear combinations with integer coefficients of the remaining classes.
\par Looking at the standard fibration in Figure~\ref{fig:pic14std1}, we observe that the nodes $a_6$ and $a_4$ are the extra nodes of the two extended Dynkin diagrams of $\widetilde{D}_6$.  It follows that $L_1, \dots,  L_5,$ $a_1, \dots, a_3, a_5, a_7$, $b_1, b_2, b_3,$ and the fiber class of the standard fibration form a basis in $\operatorname{NS}(\mathcal{X})$. Thus, the polarizing divisor can be written as a linear combination
\beq
\label{eqn:lin_comb14}
\mathcal{H} = f \, F_{\text{std}}  + \sum_{i=1}^5 l_i \, L_i +  \sum_{i=1}^3 \beta_i \, b_i  
+ \sum_{i=1}^3 \alpha_i  \, a_i + \alpha_5 \, a_5 + \alpha_7 \, a_7  \,.
\eeq
We use $\mathcal{H} \circ a_i = \mathcal{H} \circ b_j = 0$ for $i=1, \dots,7$ and $j=1, 2, 3$, and  $\mathcal{H} \circ L_k=1$ for $k=1, \dots, 5$. We obtain a linear system of equations for the coefficients in Equation~(\ref{eqn:lin_comb14}) whose unique solution is given by Equation~(\ref{linepolariz14}). We then check that $\mathcal{H}^2=4$ and  $\mathcal{H} \circ  F =3$, where $F$ is the smooth fiber class of any elliptic fibration that is obtained as the intersection  of the quartic $\mathcal{Q}$ with a line $L_i$ for $i=1, \dots, 5$; see Sections~\ref{ssec:alt14}-\ref{ssec:max14}.
\end{proof}
We now construct the embeddings of the reducible fibers  into Figure~\ref{fig:pic14} for each elliptic fibration of Theorem~\ref{thm1}:
\subsubsection{The alternate fibration}
\label{ssec:alt14}
\begin{figure}
	\centering
  	\scalebox{\MyScalePicBig}{%
    		\begin{tikzpicture}
       		\input{pic14alt.tex}
    		\end{tikzpicture}}
	\caption{The alternate fibration on $\mathcal{X}$}
	\label{fig:pic14alt}
\end{figure}
There is one way of embedding the corresponding reducible fibers of case~(1) in Theorem~\ref{thm1} into the graph given by Figure~\ref{fig:pic14}. The configuration is invariant when applying the Nikulin involution in Proposition~\ref{NikulinInvolution} and shown in Figure~\ref{fig:pic14alt}. We have
\beq
\begin{split}
{\color{blue}\widetilde{A}_1} =   \langle b_1, R_3  \rangle \,, \qquad 
{\color{green}\widetilde{A}_1}=   \langle  R_1,  L_3  \rangle \,, \qquad
{\color{magenta}\widetilde{A}_1} =   \langle b_3 ,  R_2  \rangle \,, \\
{\color{yellow}\widetilde{A}_1} =   \langle R_8, L_5  \rangle \,, \qquad
{\color{orange}\widetilde{D}_{8}}=  \langle  a_2, L_2, a_3,  a_4, a_5,  a_6, a_7, L_4, L_1 \rangle \,.
\end{split}
\eeq
Thus, the smooth fiber class is given by
\beq
\label{eqn:F_alt_14}
\begin{split}
 F_{\text{alt}} 
 & =   L_1 + L_2 +  L_4  + a_2+ 2 a_3 + 2 a_4 + 2 a_5 + 2 a_6 + 2 a_7  \\
 & =  R_1 + L_3 \ =  R_2 + b_3 \  =  R_3 + b_1 \ = R_8 + L_5\,,
\end{split} 
\eeq
and the classes of a section and 2-torsion section are ${\color{red} b_2}$ and  ${\color{red} a_1}$, respectively. Using the polarizing divisor $\mathcal{H}$ in Equation~(\ref{linepolariz14}), one checks that
\beq
 \mathcal{H} -  F_{\text{alt}} - L_1 \equiv  a_1 + \dots + a_7 + b_1 + 2 b_2 + b_3  \,.
\eeq
This is consistent with the fact that this fibration is obtained by intersecting the quartic $\mathcal{Q}$ with the pencil of planes $L_1(u, v)=0$ in Equation~(\ref{eqn:pencil_alternate}), invariant  under the action of the Nikulin involution in Proposition~\ref{NikulinInvolution};  in the graph the action is represented by a horizontal flip that also exchanges the two red nodes ${\color{red} b_2}$ and  ${\color{red} a_1}$ representing the section and the 2-torsion section.
\subsubsection{The standard fibration}
\label{ssec:std14}
There are two ways of embedding the corresponding reducible fibers of case~(2) in Theorem~\ref{thm1} into the graph given by Figure~\ref{fig:pic14}. They are depicted in Figure~\ref{fig:pic14std}. In the case of Figure~\ref{fig:pic14std1}, we have
\beq
{\color{green}\widetilde{D}_6}=   \langle L_3,   L_5, a_1,  a_2, a_3, L_2, a_4 \rangle \,, \qquad
{\color{blue}\widetilde{D}_6} =   \langle b_1, b_3, b_2, L_1, a_7,  L_4, a_6 \rangle \, .
\eeq
Thus, the smooth fiber class is given by
\beq
\label{eqn:F1_std_14}
\begin{split}
 F_{\text{std}} 
 & =  L_2 +L_3 + L_5 + 2 a_1 + 2 a_2 + 2 a_3 + a_4    \\
 & =  2 L_1 + L_4 + a_6 + 2 a_7 + b_1 + 2 b_2 + b_3  \,,
\end{split} 
\eeq
and the class of a section is ${\color{red} a_5}$.  Using the polarizing divisor $\mathcal{H}$ in Equation~(\ref{linepolariz14}), one checks that
\beq
 \mathcal{H} -  F_{\text{std}} - L_2 \equiv   a_1 + 2 a_2 + 3 a_3 + 3 a_4 + 3 a_5 + 2 a_6  + a_7\,.
\eeq
This is consistent with the fact that this fibration is obtained by intersecting the quartic $\mathcal{Q}$ with the pencil of planes $L_2(u, v)=0$ in Equation~(\ref{eqn:pencil_standard1}).
\begin{figure}
\begin{subfigure}{.5\textwidth}  
  	\centering
  	\scalebox{\MyScalePicBig}{%
    		\begin{tikzpicture}
       		\input{pic14std1.tex}
    		\end{tikzpicture}}
  	\caption{\phantom{A}}
  	\label{fig:pic14std1}
\end{subfigure}%
\begin{subfigure}{.5\textwidth}  
  	\centering
  	\scalebox{\MyScalePicBig}{%
    		\begin{tikzpicture}
       		\input{pic14std2.tex}
    		\end{tikzpicture}}
  	\caption{\phantom{B}}
  	\label{fig:pic14std2}
\end{subfigure}%
\caption{The standard fibration on $\mathcal{X}$}
\label{fig:pic14std}
\end{figure}
\par Applying the Nikulin involution in Proposition~\ref{NikulinInvolution}, we obtain the fiber configuration in Figure~\ref{fig:pic14std2} with
\beq
{\color{green}\widetilde{D}_6}=   \langle R_1,  R_8, b_2,  L_1, a_7, L_4, a_6 \rangle \,, \qquad
{\color{blue}\widetilde{D}_6} =   \langle R_2,   R_3,  a_1, a_2, a_3, L_2, a_4 \rangle \, .
\eeq
The smooth fiber class is now given by
\beq
\label{eqn:F2_std_14}
\begin{split}
 \check{F}_{\text{std}} & =  R_1+ R_8 + 2  L_1 + L_4  + 2 a_7 + a_6  + 2 b_2  \\
 & =   R_2 +  R_3 +  L_2  +  2 a_1 + 2 a_2 + 2 a_3 + a_4  \,,
\end{split} 
\eeq
 and the class of the section is ${\color{red} a_5}$. Using the polarizing divisor $\mathcal{H}$ in Equation~(\ref{linepolariz14}), one checks that
\beq
\begin{split}
&  3 \mathcal{H} -  \check{F}_{\text{std}} - 2L_1  - L_2  - L_3 - L_5 \\
\equiv  \; & 3 a_1 + 4 a_2 + 5 a_3 + 5 a_4 + 5 a_5 + 4 a_6 + 3 a_7 + 3 b_1 + 4 b_2 + 3 b_3\,.
\end{split}
\eeq
This is consistent with the fact that this fibration is also obtained by intersecting the quartic $\mathcal{Q}$ with the pencil of cubic surfaces $C_2(u, v)=0$ in Equation~(\ref{eqn:pencil_standard2}). One checks that $C_2(u, v)=0$ contains $L_1, L_2, L_3, L_5$, and is tangent to $L_1$. 
\subsubsection{The base-fiber dual fibration}
\label{ssec:bfd14}
There are several ways of embedding the corresponding reducible fibers of case~(3) in Theorem~\ref{thm1} into the graph given by Figure~\ref{fig:pic14}. They are depicted in Figure~\ref{fig:pic14bfd}. In the case of Figure~\ref{fig:pic14bfd1}, we have
\beq
\begin{split}
{\color{green}\widetilde{E}_7}=   \langle  L_5, a_1, a_2, a_3, \underline{L_2}, a_4, a_5, a_6  \rangle \,, \qquad
{\color{blue}\widetilde{D}_4} =   \langle R_1, b_1,  b_2 , b_3, L_1 \rangle \,, \quad
{\color{orange}\widetilde{A}_1} =   \langle R_4 ,  L_4 \rangle  \,.
\end{split}
\eeq
Thus, the smooth fiber class is given by
\beq
\label{eqn:F1_bfd_14}
\begin{split}
 F_{\text{bfd}} 
 & = 2 L_2 + L_5 + 2 a_1 + 3 a_2 + 4 a_3 +  3 a_4 + 2 a_5 + a_6 \\
 &=  R_1  + L_1 +  b_1 + 2 b_2 +  b_3  \ =  R_4 +  L_ 4\,,
\end{split} 
\eeq
and the class of a section is ${\color{red} a_7}$. Using the polarizing divisor $\mathcal{H}$ in Equation~(\ref{linepolariz14}), one checks that
\beq
 \mathcal{H} -  F_{\text{bfd}} - L_3 \equiv  a_1 + \dots + a_7\,.
\eeq
This is consistent with the fact that this fibration is obtained by intersecting the quartic $\mathcal{Q}$ with the pencil of planes $L_3(u, v)=0$ in Equation~(\ref{eqn:pencil_bfd1}).
\begin{figure}
\begin{subfigure}{.5\textwidth}  
  	\centering
  	\scalebox{\MyScalePicBig}{%
    		\begin{tikzpicture}
       		\input{pic14bfdL3_1.tex}
    		\end{tikzpicture}}
    	\caption{\phantom{A}}
    	\label{fig:pic14bfd1}
\end{subfigure}%
\begin{subfigure}{.5\textwidth}
  	\centering
  	\scalebox{\MyScalePicBig}{%
    		\begin{tikzpicture}
       		\input{pic14bfdL3_2.tex}
    		\end{tikzpicture}}
    	\caption{\phantom{B}}
    	\label{fig:pic14bfd2}
\end{subfigure}%
\caption{The base-fiber dual fibration on $\mathcal{X}$ (using $L_3$)}
\label{fig:pic14bfd}
\end{figure}
\par Applying the Nikulin involution in Proposition~\ref{NikulinInvolution}, we obtain the fiber configuration in Figure~\ref{fig:pic14bfd2} with
\beq
{\color{green}\widetilde{E}_7} 	= \langle R_8,  b_2,  L_1,  a_7,  \underline{L_4}, a_6, a_5, a_4  \rangle\,, \qquad
{\color{blue}\widetilde{D}_4}	= \langle R_2,  R_3,  a_1, L_3, a_2   \rangle\,, \quad
{\color{orange}\widetilde{A}_1}	= \langle R_5 , L_2 \rangle.
\eeq
The smooth fiber class is given by
\beq
\label{eqn:F2_bfd_14}
\begin{split}
 \check{F}_{\text{bfd}} &=  R_8 + 3 L_1 + 2 L_4 + a_4 + 2 a_5 + 3 a_6 + 4 a_7 + 2 b_2  \\
 & =  R_2 + R_3 + L_3 +2 a_1 + a_2 \ =  R_5 + L_2 \,,
\end{split} 
\eeq
and the class of the section is ${\color{red} a_3}$. Using the polarizing divisor $\mathcal{H}$ in Equation~(\ref{linepolariz14}), one checks that
\beq
\begin{split}
& 3 \mathcal{H} -  \check{F}_{\text{bfd}} - 2L_1  - 2L_2  - L_5 \\
 \equiv \; & 3 a_1 + 5 a_2 + 7 a_3 +6 a_4+ 5 a_5 + 4 a_6 + 3 a_7 + 3 b_1 + 4 b_2 + 3 b_3\,.
\end{split} 
\eeq
This is consistent with the fact that this fibration is also obtained by intersecting the quartic $\mathcal{Q}$ with the pencil of cubic surfaces $C_3(u, v)=0$ in Equation~(\ref{eqn:pencil_bfd2}). One checks that $C_3(u, v)=0$ contains $L_1, L_2, L_5$ and  is tangent to $L_1, L_2$.
\par As explained in  Section~\ref{sssec:bfd} a fibration with the same singular fibers, but different moduli is obtained by swapping the roles of the lines $L_3 \leftrightarrow L_5$. In the case of Figure~\ref{fig:pic14bfd1a}, we have
\beq
\begin{split}
{\color{green}\widetilde{E}_7}=   \langle  L_3, a_1, a_2, a_3, \underline{L_2}, a_4, a_5, a_6  \rangle \,, \qquad
{\color{blue}\widetilde{D}_4} =   \langle R_8, b_1,  b_2 , b_3, L_1 \rangle \,, \quad
{\color{orange}\widetilde{A}_1} =   \langle R_6 ,  L_4 \rangle  \,.
\end{split}
\eeq
Applying the Nikulin involution in Proposition~\ref{NikulinInvolution}, we obtain the fiber configuration in Figure~\ref{fig:pic14bfd2a} with
\beq
{\color{green}\widetilde{E}_7} 	= \langle R_1,  b_2,  L_1,  a_7,  \underline{L_4}, a_6, a_5, a_4  \rangle\,, \qquad
{\color{blue}\widetilde{D}_4}	= \langle R_2,  R_3,  a_1, L_5, a_2   \rangle\,, \quad
{\color{orange}\widetilde{A}_1}	= \langle R_7 , L_2 \rangle.
\eeq
\begin{figure}
\begin{subfigure}{.5\textwidth}  
  	\centering
  	\scalebox{\MyScalePicBig}{%
    		\begin{tikzpicture}
       		\input{pic14bfdL5_1.tex}
    		\end{tikzpicture}}
    	\caption{\phantom{A}}
   	\label{fig:pic14bfd1a}
\end{subfigure}%
\begin{subfigure}{.5\textwidth}
  	\centering
  	\scalebox{\MyScalePicBig}{%
    		\begin{tikzpicture}
       		\input{pic14bfdL5_2.tex}
    		\end{tikzpicture}}
    	\caption{\phantom{B}}
   	\label{fig:pic14bfd2a}
\end{subfigure}%
\caption{The base-fiber dual fibration on $\mathcal{X}$ (using $L_5$)}
\label{fig:pic14bfd_a}
\end{figure}
\subsubsection{The base-fiber dual fibration -- case~\texorpdfstring{$(3')$}{(3')}}
\label{ssec:bfd14b}
There are two ways of embedding the corresponding reducible fibers of case~(3$'$) in Theorem~\ref{thm1} into the graph given by Figure~\ref{fig:pic14}. They are depicted in Figure~\ref{fig:pic14bfd_b}. In the case of Figure~\ref{fig:pic14bfd1b}, we have
\beq
\begin{split}
{\color{green}\widetilde{E}_8}=   \langle a_1,  a_2,  \underline{L_2}, a_3,  a_4, a_5, a_6,  a_7, L_1 \rangle \,, \qquad
{\color{blue}\widetilde{A}_1} =   \langle R_4, R_8 \rangle \, , \\
{\color{orange}\widetilde{A}_1} =   \langle R_6 ,  R_1 \rangle \,, \qquad
{\color{magenta}\widetilde{A}_1} =   \langle R_9 ,  b_1 \rangle \,, \qquad
{\color{yellow}\widetilde{A}_1} =   \langle R_{11} ,  b_3 \rangle \,.
\end{split}
\eeq
Thus, the smooth fiber class is given by
\beq
\label{eqn:F1_bfd_14b}
\begin{split}
 F'_{\text{bfd}}
 & = L_1 +3 L_2 +  2 a_1 + 4 a_2 + 6 a_3 +  5 a_4 + 4 a_5 + 2 a_6 + 2 a_7\\
 &=  R_1 +  R_6 \ = R_4  + R_8  \ =   R_9 +  b_1 \ =  R_{11} +  b_3\,,
\end{split} 
\eeq
and the class of a section is ${\color{red} b_2}$.  Using the polarizing divisor $\mathcal{H}$ in Equation~(\ref{linepolariz14}), one checks that
\beq
2 \mathcal{H} -  F'_{\text{bfd}} - L_1 -  L_3 - L_4 - L_5\equiv  2 a_1 + \dots  + 2a_7 + b_1 + 2 b_2 + b_3 \,.
\eeq
This is consistent with the fact that this fibration is obtained by intersecting the quartic $\mathcal{Q}$ with the pencil $\tilde{C}_3(u, v)=0$ in Equation~(\ref{eqn:pencil_extra1}). One checks that $\tilde{C}_3(u, v)=0$ contains $L_1, L_3, L_4, L_5$.
\begin{figure}
\begin{subfigure}{.5\textwidth}  
  	\centering
  	\scalebox{\MyScalePicBig}{%
    		\begin{tikzpicture}
       		\input{pic14bfdp1.tex}
    		\end{tikzpicture}}
    	\caption{\phantom{A}}
   	\label{fig:pic14bfd1b}
\end{subfigure}%
\begin{subfigure}{.5\textwidth}
  	\centering
  	\scalebox{\MyScalePicBig}{%
    		\begin{tikzpicture}
       		\input{pic14bfdp2.tex}
    		\end{tikzpicture}}
    	\caption{\phantom{B}}
   	\label{fig:pic14bfd2b}
\end{subfigure}%
\caption{The base-fiber dual fibration on $\mathcal{X}$ -- case~(3$'$) }
\label{fig:pic14bfd_b}
\end{figure}
\par Applying the Nikulin involution in Proposition~\ref{NikulinInvolution}, we obtain the fiber configuration in Figure~\ref{fig:pic14bfd2b} with
\beq
\begin{split}
{\color{green}\widetilde{E}_8} 		= \langle b_2,   L_1,  \underline{L_4}, a_7,  a_6,  a_5, a_4,   a_3,   a_2  \rangle \, , \qquad
{\color{blue}\widetilde{A}_1}		= \langle R_5,  L_5 \rangle \,, \\
{\color{orange}\widetilde{A}_1}		= \langle R_7 , L_3 \rangle \,, \quad
{\color{magenta}\widetilde{A}_1} =   \langle R_{12} ,  R_3 \rangle \,, \qquad
{\color{yellow}\widetilde{A}_1} =   \langle R_{10},  R_2 \rangle \,.
\end{split}
\eeq
The smooth fiber class is given by
\beq
\label{eqn:F2_bfdb}
\begin{split}
 \check{F}'_{\text{bfd}} &= 4 L_1 + 3 L_4 + a_4 + 2 a_5 + 3 a_6 + 4 a_7 + 5 a_8 + 6 a_9 +  2 b_2 \\
 &  =  R_2 + R_{10}  \ =  R_3 + R_{12} \ =  R_5 + L_5 \ =  R_7 + L_3  \,,
\end{split} 
\eeq
and the class of a section is ${\color{red} a_1}$. Using the polarizing divisor $\mathcal{H}$ in Equation~(\ref{linepolariz14}), one checks that
\beq
3 \mathcal{H} -  \check{F}'_{\text{bfd}} - 2 L_1 -  3 L_2  \equiv 3 a_1 + 5 a_2 + 7 a_3 + 6 a_4 + 5 a_5 + 4 a_6 + 3 a_7  +  3 b_1 + 4 b_2 + 3 b_3 \,.
\eeq
This is consistent with the fact that this fibration is also obtained by intersecting the quartic $\mathcal{Q}$ with the pencil $C'_3(u, v)=0$ in Equation~(\ref{eqn:pencil_extra2}). One checks that $C'_3(u, v)=0$ contains $L_1, L_2$ and is also tangent to $L_1, L_2$.
\subsubsection{The maximal fibration}
\label{ssec:max14}
There are two ways of embedding the corresponding reducible fibers of case~(4) in Theorem~\ref{thm1} into the graph given by Figure~\ref{fig:pic14}. They are depicted in Figure~\ref{fig:pic14max}.  In the case of Figure~\ref{fig:pic14max1}, we have
\beq
{\color{orange}\widetilde{D}_{10}} =   \langle b_1, b_3, b_2,  L_1,  a_7, a_6, a_5, a_4, a_3, L_2, a_2 \rangle  \,, \qquad
{\color{blue}\widetilde{A}_{1}} =   \langle  R_4 , L_3 \rangle \,, \quad
{\color{green}\widetilde{A}_{1}} =   \langle  R_6 , L_5 \rangle\,.
\eeq
Thus, the smooth fiber class is given by
\beq
\label{eqn:F1_max_14}
\begin{split}
 F_{\text{max}}  & =  2 L_1 + L_2 + a_2 + 2 a_3 + 2 a_4 + 2 a_5 + 2 a_6 + 2 a_7 + b_1 +2 b_2 + b_3  \\
 & = R_4 +  L_3 \ = R_6 +  L_5\,,
\end{split} 
\eeq
and the class of a section is ${\color{red} a_1}$. Using the polarizing divisor $\mathcal{H}$ in Equation~(\ref{linepolariz14}), one checks that
\beq
  \mathcal{H} -  F_{\text{max}} - L_4 \equiv  a_1 + a_2 + a_3 + a_4 + a_5 + a_6 + a_7    \,.
\eeq
This is consistent with the fact that this fibration is obtained by intersecting the quartic $\mathcal{Q}$ with the pencil of planes $L_4(u, v)=0$ in Equation~(\ref{eqn:pencil_max1}).
\begin{figure}
\begin{subfigure}{.5\textwidth}  
  	\centering
  	\scalebox{\MyScalePicBig}{%
    		\begin{tikzpicture}
       		\input{pic14max1.tex}
    		\end{tikzpicture}}
    	\caption{\phantom{A}}
   	\label{fig:pic14max1}
\end{subfigure}%
\begin{subfigure}{.5\textwidth}
	\centering
  	\scalebox{\MyScalePicBig}{%
    		\begin{tikzpicture}
       		\input{pic14max2.tex}
    		\end{tikzpicture}}
    	\caption{\phantom{B}}
   	\label{fig:pic14max2}
\end{subfigure}%
\caption{The maximal fibration on $\mathcal{X}$}
\label{fig:pic14max}
\end{figure}
\par Applying the Nikulin involution in Proposition~\ref{NikulinInvolution}, we obtain the fiber configuration in Figure~\ref{fig:pic14max2} with
\beq
{\color{orange}\widetilde{D}_{10}} =   \langle R_2,  R_3, a_1, a_2, a_ 3, a_4, a_5, a_6 ,  a_7, L_4, L_1  \rangle \,, \quad
{\color{blue}\widetilde{A}_{1}} =   \langle  R_5 , R_1 \rangle \,, \quad 
{\color{green}\widetilde{A}_{1}} =   \langle  R_7 , R_8 \rangle \,.
\eeq
The smooth fiber class is given by
\beq
\label{eqn:F2_max_14}
\begin{split}
 \check{F}_{\text{max}}
 & =  R_2 + R_3 +  L_1 + L_4 + 2 a_1 + 2 a_2 + 2 a_ 3 + 2 a_5 + 2 a_6 + 2 a_7   \\ 
 & =   R_1 + R_5 \ =  R_7 + R_8\,,
\end{split} 
\eeq
and the class of the section is ${\color{red} b_2}$.  Using the polarizing divisor $\mathcal{H}$ in Equation~(\ref{linepolariz14}), one checks that
\beq
\begin{split}
&  4 \mathcal{H} -   \check{F}_{\text{max}}  - 3L_1  - 3L_2  - L_3 - L_5  \\
\equiv \; & 4 a_1 + 6 a_2 + 8 a_3 +7 a_4 + 6 a_5 + 5 a_6 + 4 a_7 + 4 b_1 + 6 b_2 + 4 b_3 \,.
\end{split}
\eeq
This is consistent with the fact that this fibration is also obtained by intersecting the quartic $\mathcal{Q}$ with the pencil of quartic surfaces $C_4(u, v)=0$ in Equation~(\ref{eqn:pencil_max2}). One checks that $C_4(u, v)=0$ contains $L_1, L_2, L_3, L_5$, is tangent to $L_1, L_2$, and has also a vanishing Hessian along $L_1$.
\subsection{The dual graph for \texorpdfstring{$P'_{14}$}{P'}-polarized K3 surfaces}
Next, we will construct the dual graph of smooth rational curves for the K3 surfaces $\mathcal{X}'$ in Theorem~\ref{thm_parity} with N\'eron-Severi lattice $P'_{14}$. The graph can be constructed by the tools developed in Section~\ref{ssec:dual_graph_P}. We state the following result using the parameters in Equation~(\ref{eqn:CurlyJinvariants}):
\begin{figure}
	\centering
  	\scalebox{\MyScalePicLarge}{%
    		\begin{tikzpicture}
       		\input{pic14sd.tex}
    		\end{tikzpicture}}
  	\caption{Rational curves on $\mathcal{X}'$ with $\mathrm{NS}(\mathcal{X}')=P'_{14}$}
  	\label{fig:pic14sd}
\end{figure}
\begin{theorem}
\label{thm:polarization14sd}
Assuming Equation~(\ref{eqn:generic_parameters_PP}),  for a K3 surface $\mathcal{X}'$ with N\'eron-Severi lattice $P'_{14}$ in Theorem~\ref{thm_parity} the dual graph of all smooth rational curves is given by Figure~\ref{fig:pic14sd}.
\end{theorem}
Analogous to Sections~\ref{ssec:alt14}-\ref{ssec:max14}, one can construct the embeddings of the reducible fibers for each elliptic fibration of Picard rank 14 in Theorem~\ref{thm_parity} into the graph given by Figure~\ref{fig:pic14sd}: for fibration~$(1)$ the graph is Figure~\ref{fig:pic14sd_fib1} where the green nodes represent the reducible fiber of type ${\color{orange}\widetilde{E}_7}$, the blue/yellow/magenta/orange/brown nodes represent the reducible fibers of type ${\color{blue}\widetilde{A}_{1}}$, and the red node represents the class of the section and the 2-torsion section. Notice that the diagram is invariant under the action of the Nikulin involution in Proposition~\ref{NikulinInvolution_SD}; in the graph the action is represented by a horizontal flip that also exchanges the two red nodes representing the section and the 2-torsion section. The same behavior occurred for the alternate fibration on the K3 surface $\mathcal{X}$ and was discussed in Section~\ref{ssec:alt14}. 
\par Similarly, for fibration~$(2)$ the graph is given by Figure~\ref{fig:pic14sd_fib2} where the green nodes represent the reducible fiber of type ${\color{green}\widetilde{D}_{8}}$, the blue nodes  represent the reducible fiber of type ${\color{blue}\widetilde{D}_{4}}$, and the red node represents the class of the section. As in Section~\ref{ssec:std14} we obtain a second embedding with the same singular fibers by applying the Nikulin involution in Proposition~\ref{NikulinInvolution_SD}; the graph for that second configuration with the same singular fibers is represented by a horizontal flip of the first one. The same behavior occurred for the standard  fibration on $\mathcal{X}$ and was discussed in Section~\ref{ssec:std14}. 
\begin{figure}
\begin{subfigure}{.5\textwidth}  
  	\centering
  	\scalebox{\MyScalePicMed}{%
    		\begin{tikzpicture}
       		\input{pic14sd_fib1.tex}
    		\end{tikzpicture}}
  	\caption{\phantom{A}}
  	\label{fig:pic14sd_fib1}
\end{subfigure}%
\begin{subfigure}{.5\textwidth}
  	\centering
  	\scalebox{\MyScalePicMed}{%
    		\begin{tikzpicture}
       		\input{pic14sd_fib2.tex}
    		\end{tikzpicture}}
  	\caption{\phantom{B}}
  	\label{fig:pic14sd_fib2}
\end{subfigure}%
\caption{The two fibrations on $\mathcal{X}'$}
\label{fig:pic14sd_fib}
\end{figure}
\subsection{The dual graph for  \texorpdfstring{$P''_{14}$}{P''}-polarized K3 surfaces}
Finally, we will comment on the dual graph of smooth rational curves for the K3 surfaces $\mathcal{X}''$ in Theorem~\ref{thm_Vinberg} with N\'eron-Severi lattice $P''_{14}$. The graph was already determined in \cite{MR2718942}*{Table~2}, and we recall the following:
\begin{figure}
	\centering
  	\scalebox{\MyScalePicLarge}{%
    		\begin{tikzpicture}
       		\input{pic14vin.tex}
    		\end{tikzpicture}}
  	\caption{Rational curves on $\mathcal{X}''$ with $\mathrm{NS}(\mathcal{X}'')=P''_{13}$}
  	\label{fig:pic14vin}
\end{figure}
\begin{theorem}[Vinberg]
\label{thm:polarization14vin}
Assume that $( f_{1,3}, f_{2,3}, f_{3,3}, g_1, g_3 ) \neq 0$. For a K3 surface $\mathcal{X}''$ in Theorem~\ref{thm_Vinberg} with N\'eron-Severi lattice $P''_{14}$ the dual graph of all smooth rational curves is given by Figure~\ref{fig:pic14vin}.
\end{theorem}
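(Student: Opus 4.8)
The plan is to recover the graph from the explicit quartic model $\mathrm{Q}''(f_{1,2},\dots,g_3)$ with $g_0=0$ of Theorem~\ref{thm_Vinberg}, together with the two Jacobian elliptic fibrations determined there, following the same strategy employed for the $\mathrm{P}$-polarized family in Section~\ref{ssec:dual_graph_P}. Since Vinberg already recorded this graph in \cite{MR2718942}*{Table~2}, the proof ultimately reduces to matching his configuration with Figure~\ref{fig:pic14vin} under the birational identification of the two quartic models; but I would also indicate the independent verification.

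First, I would analyze the singularities of $\mathrm{Q}''$ under the genericity hypothesis $(f_{1,3},f_{2,3},f_{3,3},g_1,g_3)\neq\vec 0$. Reading off the Weierstrass models of fibration~$(1)$ (singular fibers $II^*+I_0^*+8I_1$, trivial Mordell--Weil, root lattice $E_8\oplus D_4$) and fibration~$(2)$ ($I_8^*+10I_1$, root lattice $D_{12}$), one finds that the quartic carries rational double points whose minimal resolution produces a chain and a fork of $(-2)$-curves; I would then write down the exceptional curves together with the line and the residual conics and cubics occurring as complete intersections on $\mathrm{Q}''$, exactly as in the lists preceding Theorem~\ref{thm:polarization14}. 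Each lifts to a smooth rational curve on $\mathcal{X}''$, and these are the candidate vertices $E_1,\dots,E_{15}$.

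Second, I would compute all pairwise intersection numbers directly from the defining equations (incidence of the lines through the singular points, tangency conditions, and the standard local pictures of the $A_n$- and $D_n$-resolutions). To make the identification rigorous and to place the reducible fibers inside the graph, I would exhibit, for each of the two fibrations of Theorem~\ref{thm_Vinberg}, the explicit embedding of its singular fibers and section into $\{E_i\}$: fibration~$(1)$ realizes an $\hat E_8$ together with an $\hat D_4$, and fibration~$(2)$ realizes an $\hat D_{12}$. Cross-checking the two decompositions---they must induce the same divisor classes for the generic fiber and for the polarizing class---forces all incidences and shows that the $E_i$ are pairwise distinct. As in Proposition~\ref{prop:polarization14}, a convenient bookkeeping device is to express the pseudo-ample polarizing class $\mathcal{H}$ of $\mathrm{Q}''$ as an explicit integral combination of the $E_i$ and to check $\mathcal{H}^2=4$ and $\mathcal{H}\circ\mathrm{F}=3$ for each fibration.

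The main obstacle is \emph{completeness}: that $E_1,\dots,E_{15}$ are \emph{all} the smooth rational curves on a generic $\mathcal{X}''$. Here I would use that $\operatorname{NS}(\mathcal{X}'')\cong\mathrm{P}''$ is two-elementary of rank $14$, so $\operatorname{Aut}(\mathcal{X}'')$ is finite (Lemma~\ref{lem:lattice}) and, by Sterk's theorem, only finitely many $(-2)$-classes are effective; the possible dual graphs of all smooth rational curves for rank-$14$ two-elementary Picard lattices were classified by Nikulin in \cite{MR556762}*{Sec.~4}. It therefore suffices to check that the sublattice of $\operatorname{NS}(\mathcal{X}'')$ spanned by $E_1,\dots,E_{15}$ has rank $14$ and discriminant form matching $\mathrm{P}''$, and that the graph we produced coincides, up to isomorphism, with the one on Nikulin's list for $H\oplus E_8(-1)\oplus D_4(-1)$; any hypothetical further $(-2)$-curve would then have to meet the existing configuration incompatibly with that list. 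Since this was carried out by Vinberg, the only remaining (routine) step is to transport his Table~2 to our normal form via the birational map relating the two quartic families.
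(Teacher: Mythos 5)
Your primary route is exactly what the paper does: Theorem~\ref{thm:polarization14vin} is stated as a result of Vinberg, and the paper offers no proof beyond recalling that the graph ``was already determined in \cite{MR2718942}*{Table~2}'' and transporting it to the normal form of Equation~(\ref{eqn:Vinberg_body}) (via the birational identification of Proposition~\ref{prop:coincide} and the fibrations of Theorem~\ref{thm_Vinberg}). So the proposal is correct and essentially coincides with the paper's treatment; the embeddings of the $\hat E_8+\hat D_4$ and $\hat D_{12}$ fibers that you describe are precisely what the paper records in Figures~\ref{fig:pic14vin_fib1} and~\ref{fig:pic14vin_fib2}.

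One caveat on your supplementary ``independent verification'': for the completeness step you appeal to Nikulin's classification of dual graphs in \cite{MR556762}*{Sec.~4} for rank-$14$ two-elementary lattices, but --- as the paper itself notes at the start of Section~\ref{sec:dual_graphs} --- that classification is only available for Picard rank $\geq 15$. In rank $14$ the completeness of the list of smooth rational curves must instead be taken from Vinberg's analysis (or argued directly, e.g.\ via the finiteness of $\operatorname{Aut}(\mathcal{X}'')$ and Sterk's theorem together with an explicit determination of the fundamental domain of the Weyl group), so that part of your argument as written does not quite close; it is harmless here only because the main route through \cite{MR2718942} already settles the question.
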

It is easy to construct embeddings of the reducible fibers for each elliptic fibration of Picard rank 14 in Theorem~\ref{thm_Vinberg} into the graph given by Figure~\ref{fig:pic14vin}: for fibration~$(1)$ the graph is Figure~\ref{fig:pic14vin_fib1} where the green nodes represent the reducible fiber of type ${\color{green}\widetilde{E}_8}$, the blue nodes represent the reducible fiber of type ${\color{blue}\widetilde{D}_{4}}$, and the red node represents the class of the section. Similarly, for fibration~$(2)$ the graph is given by Figure~\ref{fig:pic14vin_fib2} where the green nodes represent the reducible fiber of type ${\color{green}\widetilde{D}_{12}}$ and the red node represents the class of the section.
\begin{figure}
\begin{subfigure}{.5\textwidth}  
  	\centering
  	\scalebox{\MyScalePicSmall}{%
    		\begin{tikzpicture}
       		\input{pic14vin_fib1.tex}
    		\end{tikzpicture}}
  	\caption{\phantom{A}}
  	\label{fig:pic14vin_fib1}
\end{subfigure}%
\begin{subfigure}{.5\textwidth}
  	\centering
  	\scalebox{\MyScalePicSmall}{%
    		\begin{tikzpicture}
       		\input{pic14vin_fib2.tex}
    		\end{tikzpicture}}
  	\caption{\phantom{B}}
  	\label{fig:pic14vin_fib2}
\end{subfigure}%
\caption{The two fibrations on $\mathcal{X}''$}
\label{fig:pic14vin_fib}
\end{figure}
\section{The corresponding double sextic K3 surfaces}
\label{sec:Kummer}
In this section we discuss the K3 surfaces $\mathcal{Y}$, obtained from the family of K3 surfaces $\mathcal{X}$ polarized by $P_{14}$ in Section~\ref{ssec:moduli_space}. We start by constructing double sextic surfaces branched on three lines coincident in a point and a cubic.  We then identify them as the K3 surfaces associated with $\mathcal{X}$ under the van Geemen-Sarti-Nikulin duality. 
\subsection{Double covers of the projective plane}
Let $\bar{\mathcal{Y}}$ be the double cover of the projective plane $\mathbb{P}^2 =\mathbb{P}(Z_1, Z_2, Z_3)$ branched along the union of three lines $\ell_1, \ell_2, \ell_3$ coincident in a point and a cubic $\mathcal{C}$. We call such a configuration \emph{generic} if the cubic is smooth and meets the three lines in nine distinct points. In particular, the cubic does not meet the point of coincidence of the three lines. We construct a geometric model as follows: we use a suitable projective transformation to move the line $\ell_3$ to $\ell_3 = \mathrm{V}(Z_3)$. We then mark three distinct points $q_0$, $q_1$, and $q_\infty$ on $\ell_3$ and use a M\"obius transformation to move these points to $[Z_1 : Z_2 : Z_3] = [ 0 : 1 : 0]$, $[1:1:0]$, and $[1: 0: 0]$. Up to scaling, the three lines, coincident in $q_1$, are then given by
\beq
  \ell_1 = \mathrm{V}\big( Z_1 - Z_2  + \mu Z_3 \big) \,, \qquad  \ell_2 = \mathrm{V}\big( Z_1 - Z_2  + \nu Z_3 \big)  \,, \qquad  \ell_3 = \mathrm{V}\big(Z_3\big) \,,
\eeq  
for some parameters $\mu, \nu$ with $\mu \neq \nu$. Let the cubic $\mathcal{C} = \mathrm{V} ( C(Z_1, Z_2, Z_3) )$ intersect the line $\ell_3$ at $q_0$, $q_\infty$, and at the point $[-d_2:c_1:0] \neq [1:1:0]$. Thus, we have 
\beq
\label{eqn:cubic}
 C = e_3 Z_3^3 + \Big( d_0 Z_1 + e_1 Z_2 \Big) Z_3^2 +  \Big( c_0 Z_1^2 + d_1 Z_1 Z_2 + e_2 Z_2^2 \Big) Z_3 + Z_1 Z_2 \Big(c_1 Z_1 + d_2 Z_2\Big)\,,
\eeq 
which can be written as
\beq
\label{eqn:cubic2}
 C=\Big( c_1 Z_2 + c_0 Z_3\Big) Z_1^2 + \Big( d_2 Z_2^2 +d_1 Z_2 Z_3 + d_0 Z_3^2\Big) Z_1 + \Big(e_2 Z_2^2 + e_1 Z_2 Z_3 + e_0 Z_3^2\Big) Z_3,
\eeq  
such that in $\mathbb{WP}_{(1,1,1,3)} =\mathbb{P}(Z_1, Z_2, Z_3, Y)$ the surface $\bar{\mathcal{Y}}$ is given by
\beq
\label{eqn:defining_eqn_3lines+cub_1st}
 Y^2  = \big( Z_1 - Z_2  + \mu Z_3 \big) \big( Z_1 - Z_2  + \nu Z_3 \big) Z_3 \; C(Z_1, Z_2,  Z_3) \,,
\eeq
for parameters $\mu, \nu, c_0, c_1, d_0, d_1, d_2, e_0, e_1, e_2$ such that $c_1 \neq 0, c_1 + d_2 \neq 0$, $\mu \neq \nu$, and $\mathcal{C}$ is a smooth cubic that intersects each line $\ell_1, \ell_2, \ell_3$ in three distinct points. We have the following:
\begin{lemma}
\label{lem:specialization_cubic}
The cubic $\mathcal{C}$ is tangent to the line $\ell_3$ at $q_0$ if and only if $d_2=0$ and the remaining parameters are general. The cubic $\mathcal{C}$ is singular at $q_0$ if and only if $d_2=e_2=0$ and the remaining parameters are general; see Figure~\ref{fig:pic_cub}.
\end{lemma}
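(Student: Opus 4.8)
The plan is to verify both assertions by a direct local computation with the explicit cubic $C$ of Equation~(\ref{eqn:cubic2}), working in the affine chart $Z_2 = 1$, in which $q_0 = [0:1:0]$ becomes the origin $(Z_1, Z_3) = (0,0)$ and $\ell_3 = \mathrm{V}(Z_3)$ becomes the axis $\{Z_3 = 0\}$; there one has
\[
 C = (c_1 + c_0 Z_3)\, Z_1^2 + (d_2 + d_1 Z_3 + d_0 Z_3^2)\, Z_1 + (e_2 + e_1 Z_3 + e_0 Z_3^2)\, Z_3 \,,
\]
so that $C(0,0) = 0$, consistent with $q_0 \in \mathcal{C}$. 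For the tangency statement I would restrict to $\ell_3$: setting $Z_3 = 0$ gives $C|_{\ell_3} = Z_1 Z_2(c_1 Z_1 + d_2 Z_2)$, so $\mathcal{C}$ meets $\ell_3$ at $q_0$, at $q_\infty = [1:0:0]$, and at $[-d_2 : c_1 : 0]$. Since $c_1 \neq 0$, the intersection multiplicity $(\mathcal{C}\cdot\ell_3)_{q_0}$ --- the vanishing order of $Z_1(c_1 Z_1 + d_2)$ at $Z_1 = 0$ --- equals $1$ if $d_2 \neq 0$ and exactly $2$ if $d_2 = 0$. Hence $\mathcal{C}$ is tangent to $\ell_3$ at $q_0$ precisely when $d_2 = 0$; the genericity proviso records that, for $d_2 = 0$ and the other coefficients generic, $\mathcal{C}$ is still a smooth cubic ($e_2 \neq 0$ in particular) meeting $\ell_1, \ell_2$ transversally, producing the configuration of Figure~\ref{fig:pic_cub}.

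For the singularity statement I would differentiate the displayed form of $C$ at the origin, obtaining $\partial_{Z_1} C(0,0) = d_2$ and $\partial_{Z_3} C(0,0) = e_2$. As $q_0$ already lies on $\mathcal{C}$, it is a singular point exactly when both partials vanish, i.e.\ if and only if $d_2 = e_2 = 0$. To identify this with the picture in Figure~\ref{fig:pic_cub}, I would observe that when $d_2 = e_2 = 0$ the degree-two part of $C$ at the origin is the binary quadratic form $c_1 Z_1^2 + d_1 Z_1 Z_3 + e_1 Z_3^2$, which is nondegenerate for generic parameters (discriminant $d_1^2 - 4 c_1 e_1 \neq 0$), so that $q_0$ is an ordinary double point of the (generically irreducible) cubic $\mathcal{C}$.

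I do not anticipate any real obstacle: both claims reduce to reading off $C|_{\ell_3}$ and the $1$-jet of $C$ at $q_0$ from the normal form in Equation~(\ref{eqn:cubic2}). The only mildly subtle point is the phrase ``the remaining parameters are generic'', which just encodes an open dense condition --- smoothness of $\mathcal{C}$ away from $q_0$, transversality with $\ell_1$ and $\ell_2$, and, in the singular case, that $q_0$ is a node rather than a worse singularity --- each cut out by the non-vanishing of an explicit polynomial in the coefficients.
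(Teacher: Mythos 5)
Your proof is correct: restricting $C$ to $\ell_3$ gives $Z_1 Z_2(c_1 Z_1 + d_2 Z_2)$, so with $c_1\neq 0$ the intersection multiplicity at $q_0$ jumps to $2$ exactly when $d_2=0$, and the $1$-jet computation $\partial_{Z_1}C(q_0)=d_2$, $\partial_{Z_3}C(q_0)=e_2$ (in the chart $Z_2=1$) gives the singularity criterion. The paper states this lemma without proof, treating it as immediate from the normal form of Equation~(\ref{eqn:cubic2}); your computation is precisely the verification that is being taken for granted, including the correct reading of the genericity clauses.
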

\begin{figure}
\hspace*{-0.5cm}
\begin{subfigure}{.35\textwidth}  
  \beqn
  \scalemath{\MyScalePicHuge}{
  \begin{xy}
  	<0cm,0cm>;<1cm,0cm>:
	(-1.0,	-0.5);(+5.0,+3.5)**@{-},
	(+4.0,+3.5);(+4.0,-0.5)**@{-},
	(+4.3,+3.4);(+2.2,-0.5)**@{-},
	(-1.0,+0.08);(+4.5,+0.2)**\crv{(2.0, 0.0)&(3.6, 0.1)&(4.2,0.13)},
	(+4.5,+0.2);(+1.1,+1.2)**\crv{(4.6,0.25)&(4.7,0.4)&(4.5,0.6)&(3.5, 0.78)},
	(+1.1,+1.2);(+4.6,+2.0)**\crv{(0.9,1.25)&(0.85,1.3)&(0.8,1.45)&(3.0,1.8)},
	(+4.0,+2.8)*{\bullet},
	(+4.0,+1.9)*{\bullet},
	(+4.0,+0.655)*{\bullet},
	(-1.05,-0.64)*++!D\hbox{$\ell_1$},
	(+2.05,-0.64)*++!D\hbox{$\ell_2$},
	(+3.75,-0.64)*++!D\hbox{$\ell_3$},
	(+4.25,+2.25)*++!D\hbox{$q_1$},
	(+4.32,+1.31)*++!D\hbox{$q_\infty$},
	(+4.25,+0.505)*++!D\hbox{$q_0$},
	(+1.60,+1.45)*++!D\hbox{$\mathcal{C}$},
  \end{xy}}
  \eeqn
  \caption{generic}
  \label{fig:pic_cub1}
\end{subfigure}%
\hspace*{-0.5cm}
\begin{subfigure}{.35\textwidth}
  \beqn
  \scalemath{\MyScalePicHuge}{
  \begin{xy}
  	<0cm,0cm>;<1cm,0cm>:
	(-1.0,	-0.5);(+5.0,+3.5)**@{-},
	(+4.0,+3.5);(+4.0,-0.5)**@{-},
	(+4.3,+3.4);(+2.2,-0.5)**@{-},
	(-1.0,+0.08);(+3.8,+0.2)**\crv{(1.3, 0.0)&(2.9, 0.10)&(3.2,0.12)&(3.4,0.14)&(3.7,0.16)},
	(+3.8,+0.2);(+1.1,+1.2)**\crv{(3.9,0.25)&(4.05,0.4)&(3.8,0.6)&(2.8, 0.78)},
	(+1.1,+1.2);(+4.6,+2.0)**\crv{(0.9,1.25)&(0.85,1.3)&(0.8,1.45)&(3.0,1.8)},
	(+4.0,+2.8)*{\bullet},
	(+4.0,+1.9)*{\bullet},
	(+4.0,+0.37)*{\bullet},
	(-1.05,-0.64)*++!D\hbox{$\ell_1$},
	(+2.05,-0.64)*++!D\hbox{$\ell_2$},
	(+3.75,-0.64)*++!D\hbox{$\ell_3$},
	(+4.25,+2.25)*++!D\hbox{$q_1$},
	(+4.32,+1.31)*++!D\hbox{$q_\infty$},
	(+4.30,+0.10)*++!D\hbox{$q_0$},
	(+1.60,+1.45)*++!D\hbox{$\mathcal{C}$},
  \end{xy}}
  \eeqn
  \caption{$d_2= 0$}
  \label{fig:pic_cub2}
\end{subfigure}%
\hspace*{-0.5cm}
\begin{subfigure}{.35\textwidth}
  \beqn
  \scalemath{\MyScalePicHuge}{
  \begin{xy}
  	<0cm,0cm>;<1cm,0cm>:
	(-1.0,	-0.5);(+5.0,+3.5)**@{-},
	(+4.0,+3.5);(+4.0,-0.5)**@{-},
	(+4.3,+3.4);(+2.2,-0.5)**@{-},
	(-1.0,+0.08);(+3.8,+0.2)**\crv{(1.3, 0.0)&(2.9, 0.10)&(3.2,0.12)&(3.4,0.14)&(3.7,0.16)},
	(+4.0,+0.38);(+4.0,+0.36)**\crv{(4.1,0.48)&(4.2,0.55)&(4.45,0.35)&(4.15,0.19)},
	(+3.8,+0.2);(+1.1,+1.2)**\crv{(3.9,0.25)&(4.05,0.4)&(3.8,0.6)&(2.8, 0.78)},
	(+1.1,+1.2);(+4.6,+2.0)**\crv{(0.9,1.25)&(0.85,1.3)&(0.8,1.45)&(3.0,1.8)},
	(+4.0,+2.8)*{\bullet},
	(+4.0,+1.9)*{\bullet},
	(+4.0,+0.37)*{\bullet},
	(-1.05,-0.64)*++!D\hbox{$\ell_1$},
	(+2.05,-0.64)*++!D\hbox{$\ell_2$},
	(+3.75,-0.64)*++!D\hbox{$\ell_3$},
	(+4.25,+2.25)*++!D\hbox{$q_1$},
	(+4.32,+1.31)*++!D\hbox{$q_\infty$},
	(+4.25,+0.30)*++!D\hbox{$q_0$},
	(+1.60,+1.45)*++!D\hbox{$\mathcal{C}$},
  \end{xy}}
  \eeqn
 \caption{$d_2=e_2=0$}
 \label{fig:pic_cub3}
\end{subfigure}%
\caption{The different branch loci for the K3 surfaces $\mathcal{Y}$}
\label{fig:pic_cub}
\end{figure}
\noindent We also remark that the cubics  $\mathcal{C}$ and $\mathcal{C} + \Lambda \ell_1 \ell_2 \ell_3$ for $\Lambda \in \mathbb{C}$ have the same intersection points with the lines $\ell_1, \ell_2, \ell_3$. After a suitable shift of coordinates, the parameters of the cubic pencil $\mathcal{C}' = \mathcal{C} + \Lambda \ell_1 \ell_2 \ell_3$ and $\mathcal{C}$ are related by
\beq
\begin{array}{rlrlrlrl}
 c'_1 & = c_1, \quad & c'_0 & = c_0 + \Lambda, \quad & d'_2 & = d_2, \quad &  d'_1 & = d_1 - 2 \Lambda ,\\[0.2em]
d'_0 & = d_0+ (\mu+\nu) \Lambda, &  e'_2 & = e_2 + \Lambda , &e'_1 & = e_1- (\mu+\nu) \Lambda, &e'_0 & = e_0+ \mu \nu \Lambda.
\end{array} 
\eeq 
\par Returning to the cubic $\mathcal{C}$, using an overall rescaling we can assume $c_1=1$ in Equation~(\ref{eqn:cubic}). Next, we apply the transformation
\beq
 \Big( Z_1, \ Z_2, \ Z_3 \Big) \ \mapsto \  \Big( \tilde{Z}_1, \ \tilde{Z}_2, \ \tilde{Z}_3 \Big) = \Big( Z_1 - \frac{d_1}{2} Z_3 , \  Z_2 - \frac{d_1+ 2 \kappa}{2} Z_3 , \ Z_3 \Big) \,,
\eeq
and set
\beq
\label{eqn:params_transfo1}
\begin{split}
 \tilde{\mu} \,& =\mu -\frac{d_1}{2} + \kappa + \kappa d_2\,, \qquad \tilde{\nu} =\nu -\frac{d_1}{2} + \kappa + \kappa d_2\,, 
 \end{split}
\eeq 
and
\beq
\label{eqn:params_transfo2}
\begin{split}
 \tilde{d}_2  & = d_2 \,, \qquad \qquad \tilde{e}_2  = e_2 - \frac{1}{2} d_1 d_2 + \kappa d_2^2\,,\\
 \tilde{e}_1 &= e_1 - \frac{1}{4} d_1^2 + \kappa (d_1 d_2 - 2 e_2) -  \kappa^2 d_2^2 \,,\\
 \tilde{c}_0 & = c_0 - \kappa \,,\qquad \; \tilde{d}_0  = d_0 - c_0 d_1 + 2 \kappa c_0 d_2 - \kappa^2 d_2\,,\\
 \tilde{e}_0 &= e_0 - \frac{d_0 d_1}{2} + \frac{c_0 d_1^2}{4} + \frac{\kappa}{4} \big( d_1^2 + 4 d_0 d_2- 4 c_0 d_1 d_2-4 e_1 \big) + \frac{\kappa^2}{2} \big(2e_2 - d_1d_2 + 2c_0 d_2^2\big)\,.
\end{split}
\eeq 
This transformation leaves $\ell_3$ and $q_0$, $q_1$, and $q_\infty$ invariant, and we obtain
\beq
  \ell_1 = \mathrm{V}\big( \tilde{Z}_1 - \tilde{Z}_2  + \tilde{\mu} \tilde{Z}_3 \big) \,, \qquad  \ell_2 = \mathrm{V}\big( \tilde{Z}_1 - \tilde{Z}_2  + \tilde{\nu} \tilde{Z}_3 \big)  \,, 
  \qquad  \ell_3 = \mathrm{V}\big(\tilde{Z}_3\big) \,,
\eeq 
and
\beq
C  =  \Big( \tilde{Z}_2 + \tilde{c}_0  \tilde{Z}_3\Big) \tilde{Z}_1^2 +\Big(\tilde{d}_2  \tilde{Z}_2^2 + \tilde{d}_0  \tilde{Z}_3^2 \Big) \tilde{Z}_1 
+ \Big(\tilde{e}_2 \tilde{Z}_2^2 + \tilde{e}_1 \tilde{Z}_2 \tilde{Z}_3 + \tilde{e}_0 \tilde{Z}_3^2\Big) \tilde{Z}_3 \,.
\eeq
Since $\kappa$ is a free parameter, we can impose one additional relation for the configuration. A convenient choice turns out to be
\beq
\label{eqn:monic}
\tilde{c}_0 + \tilde{e}_2 =  \left(1+ \frac{\tilde{d}_2}{2}\right)(\tilde{\mu} + \tilde{\nu})   \,.
\eeq  
This choice is achieved by substituting
\beq
 \kappa = \frac{2(\mu+\nu)-(d_2+2)(c_0+e_2)}{(d_2+1)(d_2-2)(d_2+3)} + \frac{d_2(d_2^2+2d_2-4)}{2(d_2+1)(d_2-2)(d_2+3)} \,
\eeq 
into Equations~(\ref{eqn:params_transfo1}) and~(\ref{eqn:params_transfo2}). The only remaining projective action -- leaving the line $\ell_1$ and its marked points $q_0$, $q_1$, and $q_\infty$ invariant -- is generated by rescaling $Z_3$. Under the action $Z_3 \mapsto \Lambda Z_3$ with $\Lambda \in \mathbb{C}^{\times}$, parameters of equivalent configurations are related by
\beq
\label{eqn:scaling_coeffs}
 \Big( \tilde{d}_2, \ \tilde{\mu},  \ \tilde{c}_0, \ \tilde{e}_2, \ \tilde{d}_0, \ \tilde{e}_1,  \, \tilde{e}_0 \Big) \ \mapsto \  
 \Big( \tilde{d}_2, \ \Lambda \tilde{\mu}, \ \Lambda \tilde{c}_0, \ \Lambda \tilde{e}_2, \ \Lambda^2 \tilde{d}_0, \ \Lambda^2 \tilde{e}_1,  \, \Lambda^3 e_0 \Big) \,.
\eeq 
\par In the following, we will drop tildes, always assume $d_2 \neq -1$ (to assure that the cubic does not pass through $q_1=[1:1:0]$, i.e.,  the point of coincidence of the three lines) and assume that $\mu$ and $\nu$ are related by Equation~(\ref{eqn:monic}). These assumptions fix all degrees of freedom except the scaling in Equation~(\ref{eqn:scaling_coeffs}). We have proved the following:
\begin{lemma}
\label{lem:double_cover}
Let $\bar{\mathcal{Y}}$ be the double cover of the projective plane $\mathbb{P}^2 = \mathbb{P}(Z_1, Z_2, Z_3)$ branched on three lines coincident in a point and a general cubic. There are affine parameters $(d_2, \mu, c_0, e_2, d_0, e_1, e_0) \in \mathbb{C}^7$, unique up to the action given by
\beq
\label{eqn:rescaling}
 \Big(  d_2, \ \mu, \ c_0, \ e_2, \ d_0, \  e_1,  \, e_0 \Big) \ \mapsto \  
 \Big(  d_2, \ \Lambda \mu, \ \Lambda c_0, \ \Lambda e_2, \ \Lambda^2  d_0, \ \Lambda^2 e_1,  \, \Lambda^3 e_0 \Big) 
\eeq 
with $\Lambda \in \mathbb{C}^{\times}$, such that $\bar{\mathcal{Y}}$ in $\mathbb{WP}_{(1,1,1,3)} =\mathbb{P}(Z_1, Z_2, Z_3, Y)$ is obtained by
\beq
\label{eqn:defining_eqn_3lines+cub}
\begin{split}
 Y^2 & \,  = \big( Z_1 - Z_2  + \mu Z_3 \big) \big( Z_1 - Z_2  + \nu Z_3 \big) Z_3 \\
  & \times   \left( \Big( Z_2 + c_0 Z_3\Big) Z_1^2 + \Big( d_2 Z_2^2 + d_0 Z_3^2\Big) Z_1 + \Big(e_2 Z_2^2 + e_1 Z_2 Z_3 + e_0 Z_3^2\Big) Z_3 \right) \,,
\end{split} 
\eeq
with $\mu + \nu = (1+d_2/2)(c_0 + e_2)$ and $d_2 \neq -1$.
\end{lemma}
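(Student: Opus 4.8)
The statement is a normalization result, and the plan is to assemble the sequence of projective changes of coordinates carried out in the paragraphs above into a single argument, checking at each stage that (i) the transformation used lies in $\mathrm{PGL}_3(\mathbb{C})$ and hence induces an isomorphism of the corresponding double covers of $\mathbb{P}^2$, (ii) it preserves the \emph{type} of the branch configuration (three distinct concurrent lines together with a smooth cubic meeting each of them in three distinct points and avoiding the point of concurrency), and (iii) the number of remaining moduli decreases exactly as claimed, until only the scaling action in Equation~(\ref{eqn:rescaling}) survives.

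First I would fix a generic configuration and write the double cover in $\mathbb{WP}_{(1,1,1,3)}$ as $Y^2 = \ell_1 \ell_2 \ell_3\, C$. Using $\mathrm{PGL}_3$, move $\ell_3$ to $\mathrm{V}(Z_3)$; the stabilizer of the line $\mathrm{V}(Z_3)$ still acts on $\ell_3 \cong \mathbb{P}^1$ through all of $\mathrm{PGL}_2$, so a M\"obius transformation sends the point of concurrency $q_1$ and two of the three points of $\mathcal{C} \cap \ell_3$, say $q_0$ and $q_\infty$, to $[0:1:0]$, $[1:1:0]$, $[1:0:0]$ in the order $q_0 \mapsto [0:1:0]$, $q_1 \mapsto [1:1:0]$, $q_\infty \mapsto [1:0:0]$. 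By distinctness of the lines this pins $\ell_1, \ell_2$ to the form $\mathrm{V}(Z_1 - Z_2 + \mu Z_3)$, $\mathrm{V}(Z_1 - Z_2 + \nu Z_3)$ with $\mu \neq \nu$, and forces $C$ into the shape of Equations~(\ref{eqn:cubic})–(\ref{eqn:cubic2}) because $q_0, q_\infty \in \mathcal{C}$ while $q_1 \notin \mathcal{C}$. An overall rescaling of $Y$ normalizes the coefficient of $Z_1^2 Z_2$ to $c_1 = 1$.

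Next I would determine the residual symmetry group: the projective transformations fixing $\mathrm{V}(Z_3)$ pointwise and fixing the three marked points on it are precisely the maps $Z_1 \mapsto Z_1 + a Z_3$, $Z_2 \mapsto Z_2 + b Z_3$, $Z_3 \mapsto \Lambda Z_3$, and requiring that $\ell_1, \ell_2$ retain the form $Z_1 - Z_2 + (\cdot) Z_3$ forces $a = b$. This leaves the one-parameter family of shifts recorded by $\kappa$ in the text — whose effect on $C$ is given by Equations~(\ref{eqn:params_transfo1})–(\ref{eqn:params_transfo2}) — followed by the scaling $Z_3 \mapsto \Lambda Z_3$. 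I would then verify, using the explicit value of $\kappa$ quoted above, that one can always arrange the single normalization $c_0 + e_2 = (1 + d_2/2)(\mu + \nu)$ of Equation~(\ref{eqn:monic}). After this, the only remaining freedom is $Z_3 \mapsto \Lambda Z_3$, which acts on the seven coefficients $(d_2, \mu, c_0, e_2, d_0, e_1, e_0)$ with the weights displayed in Equation~(\ref{eqn:rescaling}) — weight $0$ on $d_2$, since $d_2$ is the ratio of two coefficients of the same $Z_3$-degree, and weights $1,1,1,2,2,3$ on the others — while $\nu$ becomes the dependent quantity $\nu = (1+d_2/2)(c_0+e_2) - \mu$. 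Substituting the normalized data back into $Y^2 = \ell_1 \ell_2 \ell_3 C$ yields Equation~(\ref{eqn:defining_eqn_3lines+cub}) and, together with the identification of the residual group, the asserted uniqueness up to the action~(\ref{eqn:rescaling}).

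The main obstacle is the solvability of the normalization Equation~(\ref{eqn:monic}): the formula for $\kappa$ has denominator $(d_2+1)(d_2-2)(d_2+3)$, so one must check that for a generic configuration $d_2 \notin \{-1, 2, -3\}$. The hypothesis $d_2 \neq -1$ is precisely the geometric requirement that $\mathcal{C}$ not pass through $q_1 = [1:1:0]$; the values $d_2 \in \{2, -3\}$ are non-generic and can be excluded, or dealt with by first performing a preliminary generic shift, or by re-choosing which of the three points of $\mathcal{C} \cap \ell_3$ plays the role of $q_\infty$, which changes the value of $d_2$. A secondary, purely bookkeeping point is to confirm that none of the transformations destroys the genericity hypotheses (smoothness of $\mathcal{C}$, transversality of $\mathcal{C}$ with each $\ell_i$, distinctness and concurrency of the lines) — immediate, since $\mathrm{PGL}_3$ preserves incidence, tangency and singularity type.
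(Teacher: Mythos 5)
Your overall strategy is the paper's own: the lemma is proved there by exactly the chain of normalizations you describe (move $\ell_3$ to $\mathrm{V}(Z_3)$, mark $q_0,q_1,q_\infty$, rescale to $c_1=1$, use the residual affine shifts, and observe that only the $Z_3$-scaling survives). Your weight computation for the action~(\ref{eqn:rescaling}) is correct, and your discussion of the denominator $(d_2+1)(d_2-2)(d_2+3)$ in the formula for $\kappa$ is actually more careful than the paper, which does not address the non-generic values $d_2\in\{2,-3\}$ at all.

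There is, however, a genuine error in your identification of the residual symmetry group, and it is the step on which both existence and uniqueness hinge. A shift $Z_1\mapsto Z_1+aZ_3$, $Z_2\mapsto Z_2+bZ_3$ sends $Z_1-Z_2+\mu Z_3$ to $Z_1-Z_2+(a-b+\mu)Z_3$, which is still of the required form for \emph{every} pair $(a,b)$: these are precisely the lines through $q_1=[1:1:0]$ distinct from $\ell_3$, and every shift fixes $\mathrm{V}(Z_3)$ pointwise. So $a=b$ is not forced, and the residual group is the full \emph{two}-parameter group of shifts together with the scaling, i.e.\ three-dimensional. This matters twice. For existence: the cubic factor in~(\ref{eqn:defining_eqn_3lines+cub}) has no $Z_1Z_2Z_3$-term, and eliminating that coefficient (the $d_1$ of~(\ref{eqn:cubic2})) consumes one of the two shift parameters; your argument, which spends its single shift parameter on~(\ref{eqn:monic}) alone, never reaches the stated normal form. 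For uniqueness: with a two-dimensional shift group and only the one condition~(\ref{eqn:monic}) imposed, a one-parameter family of shifts would still preserve your ``normal form,'' contradicting uniqueness up to~(\ref{eqn:rescaling}). The repair is to observe that the normal form encodes \emph{two} conditions --- vanishing of the $Z_1Z_2Z_3$-coefficient and~(\ref{eqn:monic}) --- and that for generic configurations (in particular $d_2\notin\{-1,2,-3\}$) the two-parameter shift group acts simply transitively on their simultaneous solutions; the one-parameter family ``recorded by $\kappa$'' in the text is what remains after the first shift parameter has been spent on killing $d_1$. With that correction the count ($9$ residual parameters modulo a $3$-dimensional group equals $7$ parameters modulo $\mathbb{C}^\times$, i.e.\ $6$ moduli) comes out right and the argument closes.
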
 
\subsection{Elliptic fibrations and  moduli}
\label{ssec:clingher_trick}
We denote by $\mathcal{Y}$ the surface obtained as the minimal resolution of $\bar{\mathcal{Y}}$. Since $\mathcal{Y}$ is the resolution of a double sextic surface, it is a K3 surface.  We will now construct two Jacobian elliptic fibrations on $\mathcal{Y}$, one with trivial Mordell-Weil group, the other with Mordell-Weil group $\mathbb{Z}/2\mathbb{Z}$. We refer to these fibrations as the standard fibration and the alternate fibration.
\par The pencil of lines $(Z_1 - Z_2) - t Z_3=0$ for $t\in \mathbb{C}$ through the point $q_1=[1:1:0]$ induces the standard fibration on $\mathcal{Y}$. When substituting $Z_1 = X$, $Z_2=X-(c_1+d_2) (t+\mu)(t+\nu) t$, and $Z_3= (c_1+d_2)(t+\mu)(t+\nu)$ into Equation~(\ref{eqn:defining_eqn_3lines+cub_1st}) we obtain the Weierstrass model
\beq
\label{eqn:defining_fibration}
\begin{split}
 Y^2 & = X^3 - \big(t+\mu\big)\big(t+\nu\big) \Big( (c_1+2 d_2)t - (c_0 + d_1 + e_2)\Big) X^2\\
 &  + \big(c_1+d_2\big) \big(t+\mu\big)^2\big(t+\nu\big)^2 \Big(d_2 t^2 -(d_1 + 2 e_2) t + (d_0 +e_1)\Big) X \\
 & +\big(c_1+d_2\big)^2 \big(t+\mu\big)^3\big(t+\nu\big)^3 \Big( e_2 t^2 - e_1 t + e_0\Big) \,,
\end{split} 
\eeq 
with a discriminant function of the elliptic fibration $\Delta = (t+\mu)^6 (t+\nu)^6 (c_1+d_2)^2 p(t)$, where $p(t) = c_1^2 d_2^2 t^6 + \dots$ is a polynomial of degree six. We have the following:
\begin{lemma}
\label{lem:defining_eqn_3lines+cub}
A general K3 surface $\mathcal{Y}$  admits a Jacobian elliptic fibration with singular fibers $3 I_0^* + 6 I_1$ and trivial Mordell-Weil group. The  fibration has singular fibers $I_1^* + 2 I_0^* + 5 I_1$ if and only if $d_2=0$ and the remaining parameters are general. It has singular fibers $I_2^* + 2 I_0^* + 4 I_1$ if and only if $d_2=e_2=0$. 
\end{lemma}
\begin{proof}
Given the Weierstrass model in Equation~(\ref{eqn:defining_fibration}) the statement is checked by explicit computation.
\end{proof}
Similarly, the pencil of lines $Z_2 + t Z_3=0$ with $t \in \mathbb{C}$ through the point $q_\infty =[1:0:0]$ induces  the alternate fibration on $\mathcal{Y}$. In fact, when substituting $Z_1 = \nu + t + (\mu-\nu) Q_1/(Q_1 -X)$, $Z_2=t$, $Z_3=-1$ into Equation~(\ref{eqn:defining_eqn_3lines+cub}) one obtains the corresponding Weierstrass model. We have the following:
\begin{lemma}
\label{lem:alternate_eqn_3lines+cub}
A general K3 surface $\mathcal{Y}$  admits a Jacobian elliptic fibration with singular fibers $I_2^* + 6 I_2 + 4 I_1$ and Mordell-Weil group $\mathbb{Z}/2\mathbb{Z}$. The  fibration has singular fibers $I_3^* + 6 I_2 + 3 I_1$ if and only if $d_2=0$, and singular fibers $I_4^* + 6 I_2 + 2 I_1$ if and only if $d_2=e_2=0$,  and the remaining parameters are general. 
\end{lemma}
\begin{proof}
Given the Weierstrass model the statement is checked by explicit computation.
\end{proof}
\par It follows from Lemma~\ref{lem:defining_eqn_3lines+cub} that the K3 surface $\mathcal{Y}$ is polarized by the lattice $R_{14}$, and the polarizing lattice can extend to $R_{15}$ and $R_{16}$, respectively, where the lattices were defined in Equation~(\ref{eqn:Rlattices}). The complete classification of the elliptic fibrations for $R_{14}$-polarized K3 surfaces was given in  \cite{MR3201823}*{Sec.~4.7}; similarly, the classification for the elliptic fibrations on $R_{16}$-polarized K3 surfaces was given in \cite{MR2254405}.  The standard and alternate fibration above are contained in these classifications.
\par Equation~(\ref{eqn:polynomials0}) expresses the polynomials $A$ and $B$, that define the alternate fibrations on $\mathcal{X}$ and $\mathcal{Y}$ in Equation~(\ref{eqn:alt0}) and   Equation~(\ref{eqn:alt_SI}), respectively, in terms of the parameters of the Inose-type quartic $\mathcal{Q}$ in Equation~(\ref{quartic1}) as follows:
\beq
\label{eqn:APpolys}
\begin{split}
 A(t) & = t^3 + a_1 t + a_0 \ = \  t^3 - 3 \alpha t - 2 \beta \,, \\
 B(t) & = b_4 t^4 + b_3 t^3 + b_2 t^2 + b_1 t + b_0 \ = \  \big(\gamma t - \delta\big) \big(\varepsilon t - \zeta\big)  \big(\eta t - \iota\big)  \big( \kappa t -\lambda\big) \,.
\end{split}
\eeq 
On the other hand, Equation~(\ref{eqn:invariants_alternate_fibration}) expresses the coordinates of the moduli space $\mathscr{M}_P$ in Equation~(\ref{eqn:moduli_space_P}) in terms of the coefficients of the polynomials $A$ and $B$. Combing these results immediately yields Equations~(\ref{eqn:invariants_intro_1})-(\ref{eqn:invariants_intro_4}).  The permutations of the roots of $B(t)$ are generated by the symmetries in Lemma~\ref{symmetries1},(1)-(3). Finally, one checks that the action in Lemma~\ref{symmetries1},(4) corresponds to the $\mathbb{C}^\times$-action on the weighted projective space in which $\mathscr{M}_P$ sits.
\par We have the following:
\begin{theorem}
\label{thm:end1c}
The van Geemen-Sarti-Nikulin dual of a general $P_{14}$-polarized K3 surface $\mathcal{X}$ has a birational model as a double sextic branched on three lines coincident in a point and a general cubic.  Conversely, every K3 surface $\mathcal{Y}$ obtained as the minimal resolution of such double sextic is the van Geemen-Sarti-Nikulin dual of a $P_{14}$-polarized K3 surface $\mathcal{X}$. The cubic is tangent to one line if $J'_4=0$, and the cubic is singular at the intersection point with that line if $J'_4=J'_6=0$. 
\end{theorem}
\begin{proof}
Proposition~\ref{prop:fibrations_SI} shows that the van Geemen-Sarti-Nikulin dual surface $\mathcal{Y}$ of a general $P_{14}$-polarized K3 surface $\mathcal{X}$ has N\'eron-Severi lattice $R_{14}$. Because generically $\mathrm{NS}(\mathcal{Y}) = R_{14}$, the polarizing lattice is 2-elementary with invariants $(\rho, \ell, \delta) = (14, 6, 1)$ and there is a non-symplectic involution $\imath_\mathcal{Y}$ acting trivially $\mathrm{NS}(\mathcal{Y})$ \cites{MR544937,MR633160,MR633160b,MR752938,MR3165023}. The quotient $\mathcal{Y}/\imath_\mathcal{Y}$ is a rational surface and the branch locus consists of a the image of the fixed locus of the involution. Applying Nikulin's result  \cite{MR633160b}*{Thm.4.2.2}, we obtain that the fixed locus consists of a curve of genus $g=(22-\rho-\ell)/2=1$ and $k=(\rho-\ell)/2=4$. Thus, the general $R_{14}$-polarized K3 surface $\mathcal{Y}$ has a model as a double cover of a blow up of $\mathbb{P}^2$ branched on a genus-1 curve and 4 rational curves, which are necessarily 3 lines and an exceptional divisor of a singular point of the branch locus. This branch point has multiplicity more than 2, otherwise the exceptional divisor is not in the branch locus; see \cite{MR3201823}*{Sec.~3 and Prop.~3.1}.  
\par Conversely, given K3 surface $\mathcal{Y}$ obtained as the minimal resolution of the double cover branched branched over three distinct concurrent lines and a general cubic curve, Lemmas~\ref{lem:alternate_eqn_3lines+cub} and~\ref{lem:alternate_fibrations_extensions} and Proposition~\ref{prop:fibrations_SI} show that $\mathcal{Y}$ admits an alternate fibration whose van Geemen-Sarti-Nikulin dual surface $\mathcal{X}$ is a $P_{14}$-polarized K3 surface. 
\par Specialization of the branch locus in Lemma~\ref{lem:specialization_cubic}  are given by conditions $d_2=0$ and $d_2=e_2=0$, respectively. They match the specializations of the alternate fibration given in Lemma~\ref{lem:alternate_eqn_3lines+cub}. In turn, the same specializations of the alternate fibrations  occur in Lemma~\ref{lem:alternate_fibrations_extensions}, and the corresponding extensions of the lattice polarization are given in Proposition~\ref{prop:fibrations_SI}. Equation~(\ref{eqn:invariants_alternate_fibration}) then shows that conditions $d_2=0$ and $d_2=e_2=0$ are equivalent to $J'_4=0$ and $J'_4=J'_6=0$, respectively.
\end{proof}
Many details about an $R_{14}$-polarization and $R_{16}$-polarization have been known:  in  \cite{MR3201823}*{Sec.~4.7} it was shown that the K3 surface with 2-elementary N\'eron-Severi lattice $L$ with $\rho_L =14$, $\ell_L=6$ admits a model as double cover of $\mathbb{P}^2$ branched on three lines meeting in a point and a cubic. Similarly, the case of the $R_{16}$-polarized K3 surface is already known; see \cite{MR3201823}*{Example 3.7~(iii)}. Moreover, in \cite{MR3201823}*{Table on p.~521} it was shown that the quotient of an $R_{16}$-polarized K3 surface by a van Geemen-Sarti involution is a $P_{16}$-polarized  K3 surface. 
\begin{appendix}
\section{The graph of rational curves for Picard number 15}
\label{sec:rank15}
In this section we determine the graph of rational curves on the K3 surface $\mathcal{X}$ for Picard number 15, that is, for $(\kappa, \lambda)=(0,1)$. In this case the $P_{14}$-polarization is enhanced to a $P_{15}$-polarization. One verifies that the singularity at $\mathrm{P}_1$ is a rational double point of type $A_9$, and the singularity at $\mathrm{P}_2$ is still of type $A_3$. For $(\kappa, \lambda)=(0,1)$, the two sets $\{a_1, a_2, \dots, a_9\}$ and $\{b_1, b_2, b_3\}$ are the curves appearing from resolving the rational double  point singularities at $\mathrm{P}_1$ and $\mathrm{P}_2$, respectively.  The curves $L_{5}, R_6, \dots, R_{12}$ introduced above become redundant for $(\kappa, \lambda)=(0,1)$. 
\begin{figure}
  	\centering
  	\scalebox{\MyScalePicLarge}{%
    		\begin{tikzpicture}
       		\input{pic15.tex}
    		\end{tikzpicture}}
  	\caption{Rational curves on $\mathcal{X}$ with $\mathrm{NS}(\mathcal{X}) =P_{15}$}
  	\label{fig:pic15}
\end{figure}
We have the following:
\begin{theorem}
\label{thm:polarization15}
Assume Equation~(\ref{eqn:general_params}) and $(\kappa, \lambda)=(0,1)$. Then, the K3 surface $\mathcal{X}(\alpha, \beta, \gamma, \delta , \varepsilon, \zeta, \eta, \iota, 0, 1)$ is endowed with a canonical $P_{15}$-polarization with the dual graph of all smooth rational curves given by Figure~\ref{fig:pic15}.
\end{theorem}
\begin{proof}
From any of the elliptic fibrations in Theorem~\ref{thm1} it follows that the Picard rank is 15, and $\mathcal{X}$ admits an $P_{15}$-polarization. The graph of all smooth rational curves on a K3 surface endowed with a canonical $P_{15}$-polarization was constructed in \cite{MR1029967}*{Sec.~4.5} and is shown in Figure~\ref{fig:pic15}.  Thus, to prove the theorem we only have to match the curves on $\mathcal{X}(\alpha, \beta, \gamma, \delta , \varepsilon, \zeta, \eta, \iota, 0, 1)$ and their intersection properties with the ones in Figure~\ref{fig:pic15}. The graph can then be constructed in the same way as in the proof of Theorem~\ref{thm:polarization14} and shown to coincide with Figure~\ref{fig:pic15}. Notice that the nodes $R_4$ and  $R_5$ are connected by a six-fold edge. It was proven in \cite{MR556762} that Figure~\ref{fig:pic15} contains all smooth rational curves on a general K3 surface with $P_{15}$-polarization.
\end{proof}
\begin{remark}
Figure~\ref{fig:pic15} first appeared in \cite{MR556762}*{Rem.~4.5.2} and \cite{MR1029967}*{Fig.~4}.
\end{remark}
\par We have the following:
\begin{proposition}
The polarization of the K3 surface $\mathcal{X}(\alpha, \beta, \gamma, \delta , \varepsilon, \zeta, \eta, \iota, 0, 1)$ is given by the divisor
\beq
\label{linepolariz}
\mathcal{H} = 3 L_2 + L_3 + 3 a_1 + 5 a_2 + 7 a_3 + 6 a_4 + 5 a_5 + 4 a_6 + 3 a_7 + 2 a_8 + a_9\,,
\eeq
with $\mathcal{H}^2=4$.
\end{proposition}
\begin{proof}
The proof is analogous to the proof of Proposition~\ref{prop:polarization14}.
\end{proof}
We now construct the embeddings of the reducible fibers into the graph given by Figure~\ref{fig:pic15} for each elliptic fibration in Theorem~\ref{thm1}:
\subsection{The alternate fibration}
\label{ssec:alt}
\begin{figure}
  	\centering
  	\scalebox{\MyScalePicMed}{%
    		\begin{tikzpicture}
       		\input{pic15alt.tex}
    		\end{tikzpicture}}
	\caption{The alternate fibration on $\mathcal{X}$ for Picard number 15}
	\label{fig:pic15alt}
\end{figure}
There is one way of embedding the corresponding reducible fibers of case~(1) in Theorem~\ref{thm1} into the graph given by Figure~\ref{fig:pic15}. The configuration is invariant when applying the Nikulin involution in Proposition~\ref{NikulinInvolution} and shown in Figure~\ref{fig:pic15alt}. We have
\beq
\begin{split}
{\color{blue}\widetilde{A}_1} =   \langle b_1, R_3  \rangle \,, \qquad 
{\color{green}\widetilde{A}_1}=   \langle  R_1,  L_3  \rangle \,, \qquad
{\color{magenta}\widetilde{A}_1} =   \langle b_3 ,  R_2  \rangle \,, \\
{\color{orange}\widetilde{D}_{10}}=  \langle a_2, L_2,  a_3,  a_4, a_5,  a_6, a_7, a_8, a_9,  L_4, L_1 \rangle \,.
\end{split}
\eeq
Thus, the smooth fiber class is given by
\beq
\label{eqn:F_alt}
\begin{split}
 F_{\text{alt}} 
 & =   L_1 +  L_2 + L_4 + a_2+ 2 a_3 + 2 a_4 + 2 a_5 + 2 a_6 + 2 a_7 + 2 a_8 + 2 a_9 \\
 & = R_1 + L_3\  =  R_2+ b_3  \  = R_3 + b_1\,,
\end{split} 
\eeq
and the classes of a section and 2-torsion section are ${\color{red} b_2}$ and  ${\color{red} a_1}$, respectively. Using the polarizing divisor $\mathcal{H}$ in Equation~(\ref{linepolariz}), one checks that
\beq
 \mathcal{H} -  F_{\text{alt}} - L_1 \equiv  a_1 + \dots + a_9 + b_1 + 2 b_2 + b_3  \,.
\eeq
This is consistent with the fact that this fibration is obtained by intersecting the quartic $\mathcal{Q}(\alpha, \beta, \gamma, \delta , \varepsilon, \zeta, \eta, \iota, 0, 1)$ with the pencil of planes $L_1(u, v)=0$ in Equation~(\ref{eqn:pencil_alternate}) which is invariant under the Nikulin involution.
\subsection{The standard fibration}
\label{ssec:std}
There are two ways of embedding the corresponding reducible fibers of case~(2) in Theorem~\ref{thm1} into the graph given by Figure~\ref{fig:pic15}. They are depicted in Figure~\ref{fig:pic15std}. In the case of Figure~\ref{fig:pic15std1}, we have
\beq
{\color{green}\widetilde{E}_7}=   \langle L_3,  a_1,  a_2, a_3, \underline{L_2}, a_4,  a_5, a_6 \rangle \,, \qquad
{\color{blue}\widetilde{D}_6} =   \langle b_3, b_1, b_2, L_1, a_9, L_4,  a_8 \rangle \, .
\eeq
Thus, the smooth fiber class is given by
\beq
\label{eqn:F1_std}
\begin{split}
 F_{\text{std}} 
 & = 2 L_2 + L_3 + 2 a_1 + 3 a_2 + 4 a_3 +  3 a_4 + 2  a_5 + a_6   \\
 & = 2 L_1 + L_4 + a_8 + 2 a_9 + b_1 + 2 b_2 + b_3  \,,
\end{split} 
\eeq
and the class of a section is ${\color{red} a_7}$.  Using the polarizing divisor $\mathcal{H}$ in Equation~(\ref{linepolariz}), one checks that
\beq
 \mathcal{H} -  F_{\text{std}} - L_2 \equiv  a_1 + 2 a_2 + 3 a_3 + 3 a_4 + 3 a_5 + 3 a_6  + 3 a_7 + 2 a_8  + a_9\,.
\eeq
This is consistent with the fact that this fibration is obtained by intersecting the quartic $\mathcal{Q}(\alpha, \beta, \gamma, \delta , \varepsilon, \zeta, \eta, \iota, 0, 1)$ with the pencil $L_2(u, v)=0$ in Equation~(\ref{eqn:pencil_standard1}).
\begin{figure}
\begin{subfigure}{.5\textwidth}  
	\centering
	\scalebox{\MyScalePicMed}{%
		\begin{tikzpicture}
		\input{pic15std1.tex}
		\end{tikzpicture}}
	\caption{\phantom{A}}
	\label{fig:pic15std1}
\end{subfigure}%
\begin{subfigure}{.5\textwidth}
	\centering
	\scalebox{\MyScalePicMed}{%
		\begin{tikzpicture}
       		\input{pic15std2.tex}
    		\end{tikzpicture}}
  	\caption{\phantom{B}}
  	\label{fig:pic15std2}
\end{subfigure}%
\caption{The standard fibration on $\mathcal{X}$ for Picard number 15}
\label{fig:pic15std}
\end{figure}
\par Applying the Nikulin involution in Proposition~\ref{NikulinInvolution}, we obtain the fiber configuration in Figure~\ref{fig:pic15std2} with
\beq
{\color{green}\widetilde{E}_7}=   \langle R_1,  b_2,  L_1, a_9, \underline{L_4}, a_8, a_7, a_6 \rangle \,, \qquad
{\color{blue}\widetilde{D}_6} =   \langle R_2,   R_3,  a_1, a_2, a_3, L_2, a_4 \rangle \, .
\eeq
The smooth fiber class is now given by
\beq
\label{eqn:F2_std}
\begin{split}
 \check{F}_{\text{std}} & =  R_1 + 3 L_1 + 2 L_4 + a_6 + 2 a_7 + 3 a_8  + 4 a_9   \\
 & =    R_2 +  R_3 + L_2  + 2 a_1 + 2 a_2 + 2 a_3 + a_4  \,,
\end{split} 
\eeq
 and the class of the section is ${\color{red} a_5}$. Using the polarizing divisor $\mathcal{H}$ in Equation~(\ref{linepolariz}), one checks that
\beq
\begin{split}
&  3 \mathcal{H} -  \check{F}_{\text{std}} - 2L_1  - 2L_2  - L_3 \\
\equiv  \; & 3 a_1 + 5 a_2 + 7 a_3 + 7 a_4 + 7 a_5 + 6 a_6 + 5 a_7 + 4 a_8  + 3 a_9 + 3 b_1 + 4 b_2 + 3 b_3 \,.
\end{split}
\eeq
This is consistent with the fact that this fibration is also obtained by intersecting the quartic $\mathcal{Q}(\alpha, \beta, \gamma, \delta , \varepsilon, \zeta, \eta, \iota, 0, 1)$ with the pencil $C_2(u, v)=0$ in Equation~(\ref{eqn:pencil_standard2}), which for $(\kappa, \lambda)=(0,1)$ is also tangent to $L_2$. 
\subsection{The base-fiber dual fibration}
\label{ssec:bfd}
There are two ways of embedding the corresponding reducible fibers of case~(3) in Theorem~\ref{thm1} into the graph given by Figure~\ref{fig:pic15}.  They are depicted in Figure~\ref{fig:pic15bfd}. In the case of Figure~\ref{fig:pic15bfd1}, we have
\beq
{\color{green}\widetilde{E}_8}=   \langle a_1,  a_2,  \underline{L_2}, a_3, a_4, a_5, a_6,  a_7, a_8 \rangle \,, \quad
{\color{blue}\widetilde{D}_4} =   \langle R_1, b_1,  b_2 , b_3, L_1 \rangle \, , \quad
{\color{orange}\widetilde{A}_1} =   \langle R_4 ,  L_4 \rangle \, .
\eeq
Thus, the smooth fiber class is given by
\beq
\label{eqn:F1_bfd}
\begin{split}
 F_{\text{bfd}}
 & = 3 L_2 + 2 a_1 +  4 a_2 + 6 a_3 + 5 a_4 + 4 a_5 + 3 a_6  + 2 a_7 + a_8   \\
 &=  R_1  + L_1 +  b_1 + 2 b_2 + b_3  \ =  R_4 +  L_ 4\,,
\end{split} 
\eeq
and the class of a section is ${\color{red} a_9}$. Using the polarizing divisor $\mathcal{H}$ in Equation~(\ref{linepolariz}), one checks that
\beq
 \mathcal{H} -  F_{\text{bfd}} - L_3 \equiv  a_1 +\dots  + a_9    \,.
\eeq
This is consistent with the fact that this fibration is obtained by intersecting the quartic $\mathcal{Q}(\alpha, \beta, \gamma, \delta , \varepsilon, \zeta, \eta, \iota, 0, 1)$ with the pencil $L_3(u, v)=0$ in Equation~(\ref{eqn:pencil_bfd1}).
\begin{figure}
\begin{subfigure}{.5\textwidth}  
	\centering
  	\scalebox{\MyScalePicMed}{%
    		\begin{tikzpicture}
       		\input{pic15bfd1.tex}
    		\end{tikzpicture}}
  	\caption{\phantom{A}}
  	\label{fig:pic15bfd1}
\end{subfigure}%
\begin{subfigure}{.5\textwidth}  
	\centering
 	\scalebox{\MyScalePicMed}{%
		\begin{tikzpicture}
       		\input{pic15bfd2.tex}
    		\end{tikzpicture}}
  	\caption{\phantom{B}}
  	\label{fig:pic15bfd2}
\end{subfigure}%
\caption{The base-fiber dual fibration on $\mathcal{X}$ for Picard number 15}
\label{fig:pic15bfd}
\end{figure}
\par Applying the Nikulin involution in Proposition~\ref{NikulinInvolution}, we obtain the fiber configuration in Figure~\ref{fig:pic15bfd2} with
\beq
{\color{green}\widetilde{E}_8} 	= \langle b_2,   L_1,  \underline{L_4}, a_9,   a_8,  a_7, a_6,   a_5,   a_4  \rangle \, , \quad
{\color{blue}\widetilde{D}_4}	= \langle R_2,  R_3,  a_1, L_3, a_2   \rangle \,, \quad
{\color{orange}\widetilde{A}_1}	= \langle R_5 , L_2 \rangle \,.
\eeq
The smooth fiber class is given by
\beq
\label{eqn:F2_bfd}
\begin{split}
 \check{F}_{\text{bfd}} &= 4 L_1 + 3 L_4 + a_4 + 2 a_5 + 3 a_6 + 4 a_7 + 5 a_8 + 6 a_9 +  2 b_2 \\
 & =  R_2 + R_3 + L_3 + 2 a_1 + a_2 \ =  R_5 + L_2 \,,
\end{split} 
\eeq
and the class of the section is ${\color{red} a_3}$. Using the polarizing divisor $\mathcal{H}$ in Equation~(\ref{linepolariz}), one checks that
\beq
\begin{split}
& 3 \mathcal{H} -  \check{F}_{\text{bfd}} - 2L_1  - 3L_2  \\
 \equiv \; &3 a_1 + 6 a_2 + 9 a_3 + 8 a_4 + 7 a_5 + 6 a_6 + 5 a_7 + 4 a_8 + 3 b_1 + 4 b_2 + 3 b_3\,.
\end{split} 
\eeq
This is consistent with the fact that this fibration is also obtained by intersecting the quartic $\mathcal{Q}(\alpha, \beta, \gamma, \delta , \varepsilon, \zeta, \eta, \iota, 0, 1)$ with the pencil of cubic surfaces $C_3(u, v)=0$ in Equation~(\ref{eqn:pencil_bfd2}), which for $(\kappa, \lambda)=(0,1)$ has vanishing trace of the Hessian along $L_2$.
\subsection{The maximal fibration}
\label{ssec:max}
There are two ways of embedding the corresponding reducible fibers of case~(4) in Theorem~\ref{thm1} into the graph given by Figure~\ref{fig:pic15}. They are depicted in Figure~\ref{fig:pic15max}. In the case of Figure~\ref{fig:pic15max1}, we have
\beq
{\color{orange}\widetilde{D}_{12}} =   \langle b_1, b_3, b_2, L_1, a_9,  a_8, a_7,  a_6, a_5, a_4, a_3, L_2, a_2 \rangle  \,, \qquad
{\color{blue}\widetilde{A}_{1}} =   \langle  R_4 , L_3 \rangle \,.
\eeq
Thus, the smooth fiber class is given by
\beq
\label{eqn:F1_max}
\begin{split}
 F_{\text{max}}  & = 2 L_1 + L_2 + a_2 + 2 a_3 + 2 a_4 + 2 a_5 + 2 a_6 + 2 a_7 + 2 a_8 + 2 a_9 + b_1 +  2 b_2  + b_3  \\
 & =  R_4 +  L_3 \,,
\end{split} 
\eeq
and the class of a section is ${\color{red} a_1}$. Using the polarizing divisor $\mathcal{H}$ in Equation~(\ref{linepolariz}), one checks that
\beq
  \mathcal{H} -  F_{\text{max}} - L_4 \equiv  a_1 + a_2 + a_3 + a_4 + a_5 + a_6 + a_7 + a_8 + a_9    \,.
\eeq
This is consistent with the fact that this fibration is obtained by intersecting the quartic $\mathcal{Q}(\alpha, \beta, \gamma, \delta , \varepsilon, \zeta, \eta, \iota, 0, 1)$ with the pencil $L_4(u, v)=0$ in Equation~(\ref{eqn:pencil_max1}).
\begin{figure}
\begin{subfigure}{.5\textwidth}  
  	\centering
  	\scalebox{\MyScalePicMed}{%
    		\begin{tikzpicture}
       		\input{pic15max1.tex}
    		\end{tikzpicture}}
  	\caption{\phantom{A}}
  	\label{fig:pic15max1}
\end{subfigure}%
\begin{subfigure}{.5\textwidth}  
  	\centering
  	\scalebox{\MyScalePicMed}{%
    		\begin{tikzpicture}
       		\input{pic15max2.tex}
    		\end{tikzpicture}}
  	\caption{\phantom{B}}
  	\label{fig:pic15max2}
\end{subfigure}%
\caption{The maximal fibration on $\mathcal{X}$ for Picard number 15}
\label{fig:pic15max}
\end{figure}
\par Applying the Nikulin involution in Proposition~\ref{NikulinInvolution}, we obtain the fiber configuration in Figure~\ref{fig:pic15max2} with
\beq
{\color{orange}\widetilde{D}_{12}} =   \langle R_2,  R_3, a_1, a_2, a_ 3, a_4, a_5, a_6 ,  a_7,  a_8, a_9, L_4, L_1  \rangle \,, \qquad
{\color{blue}\widetilde{A}_{1}} =   \langle  R_5 , R_1 \rangle \,.
\eeq
The smooth fiber class is given by
\beq
\label{eqn:F2_max}
\begin{split}
 \check{F}_{\text{max}} 
 & =  R_2 + R_3 +  L_1 + L_4 + 2 a_1 + 2 a_2 + 2 a_ 3 + 2 a_5 + 2 a_6 + 2 a_7 + 2 a_8 + 2 a_9   \\ 
 & =   R_1 + R_5 \,,
\end{split} 
\eeq
and the class of the section is ${\color{red} b_2}$.  Using the polarizing divisor $\mathcal{H}$ in Equation~(\ref{linepolariz}), one checks that
\beq
\begin{split}
&  4 \mathcal{H} -   \check{F}_{\text{max}} - 3L_1  - 3L_2  - L_3  \\
\equiv \; &  L_2 + 4 a_1 +  7 a_2 + 10 a_3 + 9 a_4 + 8 a_5 + 7 a_6 + 6 a_7 + 5 a_8 + 4 a_9 + 4 b_1 + 6 b_2 + 4 b_3\,.
\end{split}
\eeq
This is consistent with the fact that this fibration is also obtained by intersecting the quartic $\mathcal{Q}(\alpha, \beta, \gamma, \delta , \varepsilon, \zeta, \eta, \iota, 0, 1)$ with the pencil of quartic surfaces $C_4(u, v)=0$ in Equation~(\ref{eqn:pencil_max2}), which for $(\kappa,\lambda)=(0,1)$ also has a vanishing trace of the Hessian along $L_2$.
\end{appendix}
\bibliographystyle{amsplain}
\bibliography{references}{}
\end{document}